\newtheorem{remark}{Remark}[section]
\newtheorem{assumption}{Assumption}[section]
\newtheorem{definition}{Definition}[section]
\newtheorem{proposition}{Proposition}[section]
\newtheorem{theorem}{Theorem}[section]
\newtheorem{lemma}{Lemma}[section]
\newtheorem{corollary}{Corollary}[section]
\newcommand{\vecc}{\boldsymbol{\operatorname{Vec}}}
\newcommand{\ndiag}{\boldsymbol{\operatorname{Diag}}}
\newcommand{\argmax}{\boldsymbol{\operatorname{argmax}}}
\def \Vap{\varepsilon}
\def \C{\mathcal{C}}
\def \PC{\mathcal{C}^{\perp}}
\def \OL{\overline{L}}
\def \bmu{\boldsymbol{\mu}}
\def \bnu{\boldsymbol{\nu}}
\def \btheta{\boldsymbol{\theta}}
\def \wz{\widehat{\mathbf{z}}}
\def \abtheta{\btheta^{\ast}}
\def \abbtheta{\overline{\btheta}^{\ast}}
\def \bh{\overline{h}}
\def \bK{\overline{K}}
\def \bg{\overline{g}}
\def \bc{\overline{b}}
\def \mz{\mathbf{z}}
\def \bz{\breve{\mathbf{z}}}
\def \bU{\overline{U}}
\def \bJ{\overline{J}}
\def \pz{\mathbf{z}^{\prime}}
\def \omu{\overline{\mu}}
\def \bx{\breve{\mathbf{x}}}
\def \wtheta{\widehat{\btheta}}
\def \tri{t\rightarrow\infty}
\def \bG{\breve{G}}
\def \bI{\breve{I}}
\def \bV{\breve{V}}
\def \wid{\widehat{d}}
\def \bzeta{\boldsymbol{\zeta}}
\def \bv{\breve{\mathbf{v}}}
\def \bu{\breve{\mathbf{u}}}
\def \mpu{\mathbf{U}^{\prime}}
\def \wu{\widehat{\mathbf{u}}}
\def \wU{\widehat{\mathbf{U}}}
\def \bau{\bu^{a}}
\def \oc{\overline{c}}
\def \mR{\mathcal{R}}
\def \wmR{\widehat{\mR}}
\def \blambda{\overline{\lambda}}
\def \Past{\mathbb{P}_{\abtheta}}
\def \East{\mathbb{E}_{\abtheta}}
\title{Asymptotically Efficient Distributed Estimation With Exponential Family Statistics}
\author{Soummya Kar and Jos\'e M.~F.~Moura

\thanks{The authors are with the Department of Electrical and Computer Engineering,
Carnegie Mellon University, Pittsburgh, PA 15213, USA (soummyak@andrew.cmu.edu, moura@ece.cmu.edu).}

\thanks{The work was partially supported by NSF grants \#~CCF-1018509 and~CCF-1011903 and by AFOSR grant \#~FA9550101291.}}
\begin{document}
\maketitle

\renewcommand{\thefootnote}{\fnsymbol{footnote}}

\renewcommand{\thefootnote}{\arabic{footnote}}

\begin{abstract}
The paper studies the problem of distributed parameter estimation in multi-agent networks with exponential family observation statistics. A certainty-equivalence type distributed estimator of the \emph{consensus} + \emph{innovations} form is proposed in which, at each each observation sampling epoch agents update their local parameter estimates by appropriately combining the data received from their neighbors and the locally sensed new information (innovation). Under global observability of the networked sensing model, i.e., the ability to distinguish between different instances of the parameter value based on the joint observation statistics, and mean connectivity of the inter-agent communication network, the proposed estimator is shown to yield consistent parameter estimates at each network agent. Further, it is shown that the distributed estimator is asymptotically efficient, in that, the asymptotic covariances of the agent estimates coincide with that of the optimal centralized estimator, i.e., the inverse of the centralized Fisher information rate. From a technical viewpoint, the proposed distributed estimator leads to non-Markovian mixed time-scale stochastic recursions and the analytical methods developed in the paper contribute to the general theory of distributed stochastic approximation.
\end{abstract}

\begin{IEEEkeywords} Distributed estimation, exponential family, consistency, asymptotic efficiency, stochastic approximation.\end{IEEEkeywords}



\section{Introduction}
\label{sec:introduction}

\subsection{Motivation}
\label{subsec:mot} Motivated by applications in multi-agent networked information processing, we revisit the problem of distributed sequential parameter estimation. The setup considered is a highly non-classical distributed information setting, in which each network agent samples over time an independent and identically distributed (i.i.d.) time-series with exponential family statistics\footnote{Exponential families subsume most of the distributions encountered in practice, for example, Gaussian, gamma, beta etc.} parameterized by the (vector) parameter of interest. The observation sequences are assumed to be conditionally independent (conditioned on the true parameter value) across the agents with different statistics. Further, in the spirit of typical agent-networking and wireless sensing applications with limited agent communication and computation capabilities, we restrict ourselves to scenarios in which each agent is only aware of its local observation statistics and, assuming slotted-discrete time, may only communicate (collaborate) with its agent-neighborhood (possibly dynamic and random) once per epoch of new observation acquisition, i.e., we consider scenarios in which the inter-agent communication rate is at most as high as the observation sampling rate. Broadly speaking, the goal of distributed parameter estimation in such multi-agent scenarios is to update over time the local agent estimates by effectively processing local observation samples and exchanging information with neighboring agents. To this end, the paper presents a distributed estimation approach of the consensus + innovations type, which, among other things, accomplishes the following:

\noindent {\bf Consistency under distributed observability}: Under \emph{global observability}\footnote{Global observability means that for every pair of different parameter values, the corresponding probability measures induced on the aggregate or collective agent observation set are \emph{distinguishable}. For setups involving exponential families distinguishability is aptly captured by strict positivity of the Kullback-Liebler (KL) divergence between the corresponding measures, see Assumption~\ref{ass:globobs} for details.} of the multi-agent sensing model and \emph{mean connectivity} of the inter-agent communication-collaboration network, our distributed estimation approach is shown to yield strongly consistent parameter estimates at each agent. Conversely, it may be readily seen that the conditions of global observability and mean network connectivity are in fact necessary for obtaining consistent parameter estimates in our distributed information-collaboration setup. Indeed, global observability is the minimal requirement for consistency even in centralized estimation, whereas, in the absence of network connectivity, there may be locally unobservable agent-network components which, under no circumstance, will be able to generate consistent parameter estimates.


\noindent {\bf Asymptotic efficiency}: Under the same conditions of global observability of the multi-agent sensing model and mean connectivity of the inter-agent communication-collaboration network, the proposed distributed estimation approach is shown to be asymptotically efficient. In other words, in terms of asymptotic convergence rate, the local agent estimates are as good as the optimal centralized\footnote{The term centralized estimator refers to a hypothetical fusion center based estimator that has access to all agent observations at all times.}, i.e., the local estimates achieve asymptotic covariance equal to the inverse of the centralized Fisher information rate. The key point to note here is that the above optimality holds as long as the mean communication network is connected irrespective of how sparse the link realizations are.

In the context of parallel computing and optimization in multi-agent environments, interacting stochastic gradient and stochastic approximation algorithms have been proposed--see, for example, early work~\cite{tsitsiklisphd84,tsitsiklisbertsekasathans86,Bertsekas-survey,Kushner-dist}. In contrast, to cope with scenarios where local observations are sensed sequentially over time and inter-agent communication is restricted to arbitrary preassigned topologies and occurs at the same rate as sensing, we have proposed \emph{consensus} + \emph{innovations} type architectures, see~\cite{KarMouraRamanan-Est-2008}. \emph{Consensus} + \emph{innovations} algorithms embed a single round of neighborhood consensus or agreement like in~\cite{Bertsekas-survey,olfatisaberfaxmurray07,Dimakis-Gossip-SPM-2011,jadbabailinmorse03}, with in addition local processing of the sampled new observation, the local innovation; see for example
consensus + innovation approaches for nonlinear distributed estimation~\cite{KarMouraRamanan-Est-2008}, detection~\cite{bajovicjakoveticxaviresinopolimoura-11,jakovetic2012distributed,Kar-Tandon-ISIT-2011}, adaptive control~\cite{Kar-Bandit-CDC-2011} and learning~\cite{Kar-QD-learning-TSP-2012}. Other approaches for distributed optimization and inference in multi-agent networks have been considered, see for example diffusion for network inference and optimization~\cite{Sayed-LMS,chen2012diffusion} and networked LMS and variants~\cite{Sayed-LMS,Stankovic-parameter,Giannakis-LMS,Nedic-parameter,sundhar2010distributed}. The key distinction between this prior art and the current paper is that, in the former the focus has been mainly on consistency (or minimizing the asymptotic error residual between the estimated and the true parameter), but not on asymptotic efficiency. The requirement of asymptotic efficiency complicates the construction of such distributed algorithms non-trivially and necessitates the use of time-varying consensus and innovation gains in the update process; further these time-varying gains driving the persistent consensus and innovation potentials need to decay at strictly different rates in order for the distributed scheme to achieve the asymptotic covariance of the optimal centralized estimator. Such mixed time-scale construction for asymptotically efficient distributed parameter estimation in linear statistical models was obtained in~\cite{KarMoura-LinEst-JSTSP-2011,Kar-AdaptiveDistEst-SICON-2012}. However, in contrast to optimal estimation in linear statistical models~\cite{KarMoura-LinEst-JSTSP-2011,Kar-AdaptiveDistEst-SICON-2012}, in the nonlinear non-Gaussian setting, the local innovation gains that achieve asymptotic efficiency are necessarily dependent on the true value of the parameter to be estimated and on the statistics of the global sensing model. Since the value of the parameter (and hence the optimal estimator gains) are not available in advance, our proposed distributed estimation approach involves a distributed online gain learning procedure that proceeds in conjunction with the sequential estimation task. As a result, a closed-loop interaction occurs between the gain learning and parameter estimation that is reminiscent of the certainty-equivalence approach for adaptive estimation and control--although the analysis methodology is significantly different from classical techniques used in adaptive processing (see, for example,~\cite{Lai-Wei,Lai-nonlinlsad} and the references therein, in the context of parameter estimation), primarily due to the distributed nature of our problem. Specifically, in our approach, each agent runs simultaneously three local time recursions: (1) an auxiliary distributed consensus + innovations estimator driven by
non-adaptive innovation gains; (2) an online distributed learning procedure that uses
the auxiliary distributed local estimators to generate a sequence of optimal adaptive
innovations gains; and 3) the desired distributed consensus + innovations estimator
whose innovations are weighted by the optimal adaptive innovations gains, thus achieving asymptotic efficiency. We note in this context that the idea of recovering asymptotically efficient estimates from consistent (but suboptimal) auxiliary estimates, although novel from a distributed estimation standpoint, has been investigated in prior work on (centralized) recursive estimation, see, for example,~\cite{Hasminskii1974estimation,Fabian1978efficient}. Finally, we note that the current formulation assumes unconstrained parametrization, in that, the parameter may take values in all of $\mathbb{R}^{M}$. There exist some related work on distributed stochastic optimization (see, for example,~\cite{ram2010distributed}) where distributed iterative algorithms that include a (local) projection step have been proposed to ensure convergence or consistency. While he do not consider constrained parameter estimation in this paper, possible extensions of our approach to constrained formulations are discussed in Section~\ref{sec:conclusions}.

In summary, in contrast to existing work, the current paper presents a principled development of distributed parameter estimation as applicable to the general and important class of multi-agent statistical exponential families; paralleling the classical development of centralized parameter estimation, it quantifies notions of (distributed) observability, performance metrics, information measures, and algorithmic optimality. Due to the mixed time-scale behavior and the non-Markovianity (induced by the learning process), the stochastic procedure does not fall under the purview of standard stochastic approximation (see, for example,~\cite{Nevelson}) or distributed stochastic approximation (see, for example,~\cite{tsitsiklisbertsekasathans86,Bertsekas-survey,Kushner-dist,KarMouraRamanan-Est-2008,Stankovic-parameter,Li-Feng,Huang,KarMoura-DistCons-TSP-2009,sundhar2010distributed}) procedures. In fact, some of the intermediate results on the pathwise convergence rates of mixed time-scale stochastic procedures obtained in the paper are more broadly applicable and contribute to the general theory of distributed stochastic approximation. In this context, we note the study of mixed time-scale stochastic procedures that arise in algorithms of the simulated annealing type (see, for example,~\cite{Gelfand-Mitter}). Apart from being distributed, our scheme technically differs from~\cite{Gelfand-Mitter} in that, whereas the additive perturbation in~\cite{Gelfand-Mitter} is a martingale difference sequence, ours is a network dependent consensus potential manifesting past dependence. In fact, intuitively, a key step in the analysis is to derive pathwise strong approximation results to characterize the rate at which the consensus term/process converges to a martingale difference process. We also emphasize that our notion of mixed time-scale is different from that of stochastic algorithms with coupling (see~\cite{Borkar-stochapp,Yin-book}), where a quickly switching parameter influences the relatively slower dynamics of another state, leading to \emph{averaged} dynamics. Mixed time scale procedures of this latter type arise in multi-scale distributed information diffusion problems, see, in particular, the paper~\cite{Krishnamurthy-Yin-consensus}, that studies interactive consensus formations in Markov-modulated switching networks.

\noindent {\bf A detailed look into existing consensus + innovations and diffusion approaches}: We discuss two broad approaches for dynamic information processing in distributed multi-agent networks (i.e., in which agents sense and process new data and cooperate or interact with each other simultaneously), namely the \emph{diffusion} approaches~\cite{Sayed-LMS,chen2012diffusion} and the \emph{consensus} + \emph{innovations} approaches~\cite{KarMouraRamanan-Est-2008,KarMoura-LinEst-JSTSP-2011,Kar-Moura-SPM-2013}, that are closest to the current line of work. While both develop distributed stochastic recursive schemes that update the local decision variables (estimates in this case) by combining the data received from the neighbors and the new external information sensed in the same time step, a key difference is in the innovation gain selection processes--in the diffusion algorithms the innovation gains (the new information fusion weights) are taken to be constant, whereas, in the consensus + innovation schemes these weights are adaptive and made to decay over time in a controlled fashion. These choices have important consequences as far as the qualitative convergence behavior is concerned: the constant gains in the diffusion approaches, while may facilitate adaptation in dynamic parameter environments, pay the price of non-zero steady state errors or inconsistent estimates (see, for example, \cite{Sayed-LMS} in the context of quadratic stochastic optimization and linear parameter estimation), whereas, the adaptive innovation weight selection guarantees consistent estimation (with zero steady state errors) in the consensus + innovation approaches (see, for example,~\cite{KarMouraRamanan-Est-2008,KarMoura-LinEst-JSTSP-2011} in the context of linear and nonlinear parameter estimation). Moreover, in~\cite{KarMouraRamanan-Est-2008,KarMoura-LinEst-JSTSP-2011} it was shown that the agent estimates are asymptotically normal, i.e., at each agent $n$ the estimation error covariance $V_{n}(t)$ goes down as $O(1/t)$, and the corresponding asymptotic covariance $\bar{V}_{n}$, i.e., the limit as $t\rightarrow\infty$ of the scaled quantity $tV_{n}(t)$, was characterized explicitly as a function of the network and sensing models (Note that, as far as diffusion schemes~\cite{Sayed-LMS,chen2012diffusion} are concerned, the quantity $tV_{n}(t)$ blows up as $t\rightarrow\infty$ since the corresponding error covariance $V_{n}(t)$ stays bounded away from zero as mentioned earlier.) However, although the consensus + innovation approaches in~\cite{KarMouraRamanan-Est-2008} were shown to be asymptotically normal, they were not asymptotically efficient in general, i.e., the agent asymptotic covariances $\bar{V}_{n}$'s were strictly larger than the inverse of the centralized Fisher information rate, the asymptotic covariance attainable by an optimal centralized procedure. Later in~\cite{KarMoura-LinEst-JSTSP-2011} asymptotically efficient distributed estimation procedures for linear models were achieved by a more delicate adaptive tuning of both the consensus and innovation gain sequences. In this paper, we develop asymptotically efficient procedures for a much larger class of nonlinear models.

We comment briefly on the organization of the rest of the paper. Section~\ref{notgraph} sets up notation. The multi-agent sensing model is formalized in Section~\ref{subsec:sensmod}, whereas, preliminary facts pertaining to the model and assumptions are summarized in Section~\ref{subsec:prel-exp}. Section~\ref{subsec:alg} describes the distributed estimation approach and the main results of the paper (concerning consistency and asymptotic efficiency of the proposed approach) are stated in Section~\ref{subsec:mainres}. The major technical developments are presented in Section~\ref{sec:genconsest} culminating to the proofs of the main results in Section~\ref{sec:proof_main_res}. A detailed discussion on the implications of the major technical constructs, comparisons with existing approaches, complexity of implementation and some trade-offs is provided in Section~\ref{sec:disc}. Finally, Section~\ref{sec:conclusions} concludes the paper.

\subsection{Notation}
\label{notgraph} We denote by~$\mathbb{R}$ the set of reals, $\mathbb{R}_{+}$  the set of non-negative reals,  and by~$\mathbb{R}^{k}$ the $k$-dimensional Euclidean.  For $a,b\in\mathbb{R}$, we use $a\vee b$ and $a\wedge b$ to denote the maximum and minimum of $a$ and $b$ respectively. For deterministic $\mathbb{R}_{+}$-valued sequences $\{a_{t}\}$ and $\{b_{t}\}$, the notation $a_{t}=O(b_{t})$ denotes the existence of a constant $c>0$ such that $a_{t}\leq cb_{t}$ for all $t$ sufficiently large. Further, the notation $a_{t}=o(b_{t})$ is used to indicate that $a_{t}/b_{t}\rightarrow 0$ as $\tri$. For $\mathbb{R}_{+}$-valued stochastic processes $\{a_{t}\}$ and $\{b_{t}\}$, the corresponding order notations are to be interpreted to hold pathwise almost surely (a.s.).

The set of $k\times k$ real matrices is denoted by $\mathbb{R}^{k\times k}$. The corresponding subspace of symmetric matrices is denoted by $\mathbb{S}^{k}$. The cone of positive semidefinite matrices is denoted by $\mathbb{S}_{+}^{k}$, whereas $\mathbb{S}_{++}^{k}$ denotes the subset of positive definite matrices. The $k\times k$ identity matrix is
denoted by $I_{k}$, while $\mathbf{1}_{k}$ and $\mathbf{0}_{k}$ denote
respectively the column vector of ones and zeros in
$\mathbb{R}^{k}$. Often the symbol $\mathbf{0}$ is used to denote the $k\times p$ zero matrix, the dimensions being clear from the context. The symbol $\top$ denotes matrix transpose, whereas, for a finite set of matrices $A_{n}\in\mathbb{R}^{k_{n}\times p}$, $n=1,\cdots,N$, the quantity $\vecc(A_{n})$ denotes the $(k_{1}+\cdots+k_{N})\times p$ matrix $[A_{1}^{\top},\cdots,A_{N}^{\top}]^{\top}$ obtained as the (column-wise) stack of the matrices $A_{n}$. The operator
$\left\|\cdot\right\|$ applied to a vector denotes the standard
Euclidean $\mathcal{L}_{2}$ norm, while applied to matrices it denotes the induced
$\mathcal{L}_{2}$ norm, which is equivalent to the matrix spectral radius for symmetric
matrices. Also, for $\mathbf{a}\in\mathbb{R}^{k}$ and $\Vap>0$, we will use $\mathbb{B}_{\Vap}(\mathbf{a})$ to denote the closed $\Vap$-neighborhood of $\mathbf{a}$, i.e.,
\begin{equation}
\label{notgraph1}
\mathbb{B}_{\Vap}(\mathbf{a})=\left\{\mathbf{b}\in\mathbb{R}^{k}~:~\|\mathbf{b}-\mathbf{a}\|\leq\Vap\right\}.
\end{equation}
The notation $A\otimes B$ is used to denote the Kronecker product of two matrices $A$ and $B$.

The following notion of consensus subspace and its complement will be used:
\begin{definition}
\label{def:consspace} Let $N$ and $M$ be positive integers and consider the Euclidean space $\mathbb{R}^{NM}$. The consensus or agreement subspace $\mathcal{C}$ of $\mathbb{R}^{NM}$ is then defined as
\begin{equation}
\label{def:consspace1}\mathcal{C}=\left\{\mathbf{z}\in\mathbb{R}^{NM}~:~\mathbf{z}=\mathbf{1}_{N}\otimes\mathbf{a}~\mbox{for some $\mathbf{a}\in\mathbb{R}^{M}$}\right\}.
\end{equation}
The orthogonal complement of $\mathcal{C}$ in $\mathbb{R}^{NM}$ is denoted by $\PC$. Finally, for a given vector $\mathbf{z}\in\mathbb{R}^{NM}$, its projection on the consensus subspace $\C$ is to be denoted by $\mathbf{z}_{\C}$, whereas, $\mathbf{z}_{\PC}=\mathbf{z}-\mathbf{z}_{\C}$ denotes the projection on the orthogonal complement~$\PC$.

Also, for $\mathbf{z}\in\mathcal{C}$, we will denote by $\mathbf{z}^{a}$ the vector $\mathbf{a}\in\mathbb{R}^{M}$ such that $\mathbf{z}=\mathbf{1}_{N}\otimes\mathbf{a}$.
\end{definition}

Time is assumed to be discrete or slotted throughout the paper. The symbols $t$ and $s$  denote time, and $\mathbb{T}_{+}$ is the discrete index set $\{0,1,2,\cdots\}$. The parameter to be estimated belongs to a subset~$\Theta$ (generally open) of the Euclidean space $\mathbb{R}^{M}$. We reserve the symbol $\btheta$ to denote a canonical element of the parameter space $\Theta$, whereas, the true (but unknown) value of the
parameter (to be estimated) is denoted by~$\abtheta$. The symbol $\mathbf{x}_{n}(t)$ is used to denote the $\mathbb{R}^{M}$-valued
estimate of~$\abtheta$ at time~$t$ at agent~$n$, whereas, $\mathbf{y}_{n}(t)$ will be used to denote the observation at agent $n$ at time $t$. Without
loss of generality, the initial estimate,
$\mathbf{x}_{n}(0)$, at time~$0$ at agent~$n$ is assumed to be a non-random
quantity.

\textbf{Spectral graph theory}: The inter-agent communication topology at a given time instant may be described by an \emph{undirected} graph $G=(V,E)$, with $V=\left[1\cdots N\right]$ and~$E$ denoting the set of agents (nodes) and inter-agent communication links (edges) respectively. The unordered pair $(n,l)\in E$ if there exists an edge between nodes~$n$ and~$l$. We consider simple graphs, i.e., graphs devoid of self-loops and multiple edges. A graph is connected if there exists a path\footnote{A path between nodes $n$ and $l$ of length $m$ is a sequence
$(n=i_{0},i_{1},\cdots,i_{m}=l)$ of vertices, such that $(i_{k},i_{k+1})\in E\:\forall~0\leq k\leq m-1$.}, between each pair of nodes. The neighborhood of node~$n$ is
\begin{equation}
\label{def:omega} \Omega_{n}=\left\{l\in V\,|\,(n,l)\in
E\right\}. 
\end{equation}
Node~$n$ has degree $d_{n}=|\Omega_{n}|$ (the number of edges with~$n$ as one end point.) The structure of the graph can be described by the symmetric $N\times N$ adjacency matrix, $A=\left[A_{nl}\right]$, $A_{nl}=1$, if $(n,l)\in E$, $A_{nl}=0$, otherwise. Let the degree matrix  be the diagonal matrix $D=\mbox{diag}\left(d_{1}\cdots d_{N}\right)$. By definition, the positive semidefinite matrix $L=D-A$ is called the graph Laplacian matrix. The eigenvalues of $L$ can be ordered as $0=\lambda_{1}(L)\leq\lambda_{2}(L)\leq\cdots\leq\lambda_{N}(L)$, the eigenvector corresponding to $\lambda_{1}(L)$ being $(1/\sqrt{N})\mathbf{1}_{N}$. The multiplicity of the zero eigenvalue equals the number of connected components of the network; for a connected graph, $\lambda_{2}(L)>0$. This second eigenvalue is the algebraic connectivity or the Fiedler value of
the network; see \cite{FanChung} for detailed treatment of graphs and their spectral theory.

\section{Multi-agent sensing model}
\label{sec:sensmod} Let $\btheta^{\ast}\in\mathbb{R}^{M}$ be an $M$-dimensional (vector) parameter that is to be estimated by a network of~$N$ agents. Throughout, we assume that all the random objects are defined on a common measurable space $\left(\Omega,\mathcal{F}\right)$ equipped with a filtration $\{\mathcal{F}_{t}\}$. Probability and expectation, when the true (but unknown) parameter value $\abtheta$ is in force, are denoted by $\Past(\cdot)$ and  $\East[\cdot]$ respectively. All inequalities involving random variables are to be interpreted a.s.

Since the \emph{sources of randomness} in our formulation are the observations $\mathbf{y}_{n}(t)$'s sensed by the network agents at each time $t$ and the Laplacian matrices $L_{t}$'s modeling the stochastic inter-agent communication graphs over time (to be made precise soon), the filtration $\{\mathcal{F}_{t}\}$ may be taken to be the natural filtration induced by these random quantities, i.e., $\mathcal{F}_{t}=\sigma\left(\{L_{s},\{\mathbf{y}_{n}(s)\}_{n=1}^{N}\}_{s=0}^{t-1}\right)$ is the $\sigma$-algebra induced by the observation and communication processes. Finally, a stochastic process $\{\mathbf{z}_{t}\}$ is said to be $\{\mathcal{F}_{t}\}$-adapted if the $\sigma$-algebra $\sigma(\mathbf{z}_{t})$ is a subset of $\mathcal{F}_{t}$ at each $t$; in particular, if $\{\mathcal{F}_{t}\}$ is the natural filtration induced by the observations and Laplacians, then a process $\{\mathbf{z}_{t}\}$ is $\{\mathcal{F}_{t}\}$-adapted if for each $t$ there exists a measurable function $\mathcal{Z}_{t}(\cdot)$ such that $\mathbf{z}_{t}=\mathcal{Z}_{t}\left(\{L_{s},\{\mathbf{y}_{n}(s)\}_{n=1}^{N}\}_{s=0}^{t-1}\right)$.

\subsection{Sensing Model}
\label{subsec:sensmod} Each network agent $n$ sequentially observes an independent and identically distributed (i.i.d.) time-series $\{\mathbf{y}_{n}(t)\}$ of noisy measurements of $\abtheta$, where the distribution $\bmu_{n}^{\abtheta}$ of $\mathbf{y}_{n}(t)$ belongs to a $\btheta$-parameterized exponential family, formalized as follows:

\noindent\begin{assumption}
\label{ass:sensmod} For each $n$, let $\bnu_{n}$ be a $\sigma$-finite measure on $\mathbb{R}^{M_{n}}$. Let $g_{n}:\mathbb{R}^{M_{n}}\mapsto\mathbb{R}^{M}$ be a Borel function such that for all $\btheta\in\mathbb{R}^{M}$ the following expectation exists:
\begin{equation}
\label{ass:sensmod1}\lambda_{n}(\btheta)=\int_{\mathbb{R}^{M_{n}}}e^{\btheta^{\top}g_{n}(\mathbf{y}_{n})}d\bnu_{n}(\mathbf{y}_{n})<\infty.
\end{equation}
Finally, let $\left\{\bmu_{n}^{\btheta}\right\}$, for $\btheta\in\mathbb{R}^{M}$, be the corresponding $\btheta$-parameterized exponential family of distributions on $\mathbb{R}^{M_{n}}$, i.e., for each $\btheta\in\mathbb{R}^{M}$ the probability measure $\bmu_{n}^{\btheta}$ on $\mathbb{R}^{M_{n}}$ is given by the Radon-Nikodym derivative
\begin{equation}
\label{ass:sensmod2}\frac{d\bmu_{n}^{\btheta}}{d\bnu_{n}}(\mathbf{y}_{n})=e^{\left(\btheta^{\top}g_{n}(\mathbf{y}_{n})-\psi_{n}(\btheta)\right)}
\end{equation}
for all $\mathbf{y}_{n}\in\mathbb{R}^{M_{n}}$, where $\psi_{n}(\cdot)$ denotes the function $\psi_{n}(\btheta)=\log\lambda_{n}(\btheta)$.

We assume that each network agent $n$ obtains an $\{\mathcal{F}_{t+1}\}$-adapted independent and identically distributed (i.i.d.) sequence $\{\mathbf{y}_{n}(t)\}$ of observations of the (true) parameter $\btheta^{\ast}$ with distribution $\bmu_{n}(\btheta^{\ast})$, and, for each $t$, $\mathbf{y}_{n}(t)$ is independent of $\mathcal{F}_{t}$. Further, we assume that the observation sequences $\{\mathbf{y}_{n}(t)\}$ and $\{\mathbf{y}_{l}(t)\}$ at any two agents $n$ and $l$ are mutually independent.
\end{assumption}

\begin{remark}
\label{rem:sens_mod} Note that the above formalization enables us to capture very general classes of sensing models. For instance, if the dominating measure $\bnu_{n}$ is the Lebesgue measure, the Radon-Nikodym derivatives in~\eqref{ass:sensmod2} coincide with the standard notion of probability density functions (p.d.f.); hence, standard continuously distributed observation models such as the Gaussian, gamma, beta, etc.~for which p.d.f.'s exist readily fit into our framework by taking $\bnu_{n}$ to be the Lebesgue measure and the Radon-Nikodym derivative by being a p.d.f. Moreover, often in the case of continuous probability distributions, depending on the experimental model and parameterization at hand, the dominating measure need not be exactly the Lebesgue measure but another measure which is absolutely continuous with respect to the Lebesgue measure--a common example is that of \emph{location} families in which the parameter $\btheta$ models translations of a given continuous probability distribution. Differently, by taking $\bnu_{n}$ to be the counting measure, the framework allows us to consider parameterized models with discrete-valued observations in which case the Radon-Nikodym derivatives in~\eqref{ass:sensmod2} correspond to probability mass functions (p.m.f.). This may arise, for instance, in target tracking or source localization applications in which $\btheta$ corresponds to the location of a target in 3-D space and the intensity measuring device (sensor) records only discrete intensity levels rather than registering continuous intensities. More generally, scenarios in which some components of the observation vector $\mathbf{y}_{n}(t)$ are continuous and some discrete, or the distribution of $\mathbf{y}_{n}(t)$ is a mixture of continuous and discrete distributions, may also be modeled by appropriately selecting the dominating measure $\bnu_{n}$. Further, by allowing different observation statistics at different agents~$n$, the framework captures applications with heterogeneous agents and diverse sensing modalities.
\end{remark}

We will also denote by $\mathbf{y}_{t}$ the totality of agent observations at a given time $t$, i.e., $\mathbf{y}_{t}=\vecc(\mathbf{y}_{n}(t))=\left[\mathbf{y}_{1}^{\top}(t),\cdots,\mathbf{y}_{N}^{\top}(t)\right]^{\top}$. For $\btheta\in\mathbb{R}^{M}$ let $\bmu^{\btheta}$ denote the product measure $\bmu_{1}^{\btheta}\otimes\cdots\otimes\bmu_{N}^{\btheta}$ on the product space $\mathbb{R}^{M_{1}}\otimes\cdots\otimes\mathbb{R}^{M_{N}}$, which means the measures $\bmu_{n}$, $n=1\cdots N$, are independent; it is readily seen that $\{\bmu^{\btheta}\}$ is a $\btheta$-parameterized exponential family with respect to (w.r.t.) the product measure $\bnu = \bnu_{1}\otimes\cdots\otimes\bnu_{N}$ and given by the Radon-Nikodym derivatives
\begin{equation}
\label{sensmod3}
\frac{d\bmu^{\btheta}}{d\bnu}(\mathbf{y})=e^{\left(\btheta^{\top}g(\mathbf{y})-\psi(\btheta)\right)},
\end{equation}
where $\mathbf{y}=\vecc(\mathbf{y}_{n})$ denotes a generic element of the product space and the functions $g(\cdot)$ and $\psi(\cdot)$ are given by
\begin{equation}
\label{sensmod5}g(\mathbf{y})=\sum_{n=1}^{N}g_{n}(\mathbf{y}_{n})~~~\mbox{and}~~~\psi(\btheta)=\sum_{n=1}^{N}\psi_{n}(\btheta)
\end{equation}
respectively.

It is readily seen that under Assumption~\ref{ass:sensmod} the global observation sequence $\{\mathbf{y}_{t}\}$ is $\{\mathcal{F}_{t+1}\}$-adapted, with $\mathbf{y}_{t}$ being independent of $\mathcal{F}_{t}$ and distributed as $\bmu^{\btheta^{\ast}}$ (due to mutual independence of the local agent observations) for all $t$.

For most practical agent network applications, each agent observes only a subset of~$M_n$ of the components of the parameter vector, with $M_{n}\ll M$. It is then necessary for the agents to collaborate by means of occasional local inter-agent message exchanges to achieve a reasonable estimate of the parameter $\btheta^{\ast}$. To formalize, while we do not require \emph{local observability} for $\btheta^{\ast}$, we assume that the network sensing model is \emph{globally observable} as follows:
\begin{assumption}
\label{ass:globobs} The network sensing model is globally observable, i.e., we assume $D(\btheta,\btheta^{\prime})>0$ and $D(\btheta^{\prime},\btheta)>0$ for each pair $(\btheta,\btheta^{\prime})$ of parameter values, where $D(\btheta,\btheta^{\prime})$ denotes the Kullback-Leibler divergence between the distributions $\bmu^{\btheta}$ and $\bmu^{\btheta^{\prime}}$, i.e.,
\begin{equation}
\label{ass:globobs1} D(\btheta,\btheta^{\prime})=\int_{\mathbf{y}}\log\left(\frac{d\bmu^{\btheta}}{d\bmu^{\btheta^{\prime}}}(\mathbf{y})\right)d\bmu^{\btheta}(\mathbf{y}).
\end{equation}
\end{assumption}

\subsection{Some preliminaries}
\label{subsec:prel-exp} We state some useful analytical properties associated with the multi-agent sensing model, in particular, the implications of the global observability condition (see Assumption~\ref{ass:globobs}). Most of the listed properties are direct consequences of standard analytical arguments involving statistical exponential families, see, for example,~\cite{Brown:expfam}.
\begin{proposition}
\label{prop:analytic} Let Assumption~\ref{ass:sensmod} hold. Then,
\begin{itemize}
\item[\textup{(1)}] For each $n$, the function $\psi_{n}(\cdot)$ is infinitely differentiable on $\mathbb{R}^{M}$.
\item[\textup{(2)}] For each $n$, let $h_{n}:\mathbb{R}^{M}\mapsto\mathbb{R}^{M}$ be the gradient of $\psi_{n}(\cdot)$, i.e., $h_{n}(\btheta)=\nabla_{\btheta}\psi_{n}(\btheta)$ for all $\btheta\in\mathbb{R}^{M}$. Then\footnote{For a function $f(\btheta)$, $\nabla_{\btheta}f(\btheta)\in\mathbb{R}^{M}$ denotes the vector of partial derivatives, i.e., the $i$-th component of $\nabla_{\btheta}f(\btheta)$ is given by $\frac{\partial f(\btheta)}{\partial\btheta_{i}}$. The Hessian $\nabla^{2}_{\btheta}f(\btheta)\in\mathbb{R}^{M\times M}$ denotes the matrix of second order partial derivatives, whose $i,j$-th entry corresponds to $\frac{\partial^{2} f(\btheta)}{\partial\btheta_{i}\partial\btheta_{j}}$.}
    \begin{equation}
    \label{prop:analytic1}h_{n}(\btheta)=\int_{\mathbf{y}_{n}\in\mathbb{R}^{M_{n}}}g_{n}(\mathbf{y}_{n})d\bmu_{n}^{\btheta}(\mathbf{y}_{n})~~~\forall\btheta\in\mathbb{R}^{M}
    \end{equation}
    and the following inequality (monotonicity) holds for each pair $\left(\btheta,\btheta^{\prime}\right)$ in $\mathbb{R}^{M}$:
    \begin{equation}
    \label{prop:analytic2}\left(\btheta-\btheta^{\prime}\right)^{\top}\left(h_{n}(\btheta)-h_{n}(\btheta^{\prime})\right)\geq 0.
    \end{equation}
\item[\textup{(3)}] If, in addition, Assumption~\ref{ass:globobs} holds, denoting by $h(\cdot)$ the gradient of $\psi(\cdot)$, see~\eqref{sensmod5}, we have the following strict monotonicity
    \begin{equation}
    \label{prop:analytic3}
    \left(\btheta-\btheta^{\prime}\right)^{\top}\left(h(\btheta)-h(\btheta^{\prime})\right)=\sum_{n=1}^{N}\left(\btheta-\btheta^{\prime}\right)^{\top}\left(h_{n}(\btheta)-h_{n}(\btheta^{\prime})\right)>0
    \end{equation}
    for each pair $\left(\btheta,\btheta^{\prime}\right)$ in $\mathbb{R}^{M}$ such that $\btheta\neq\btheta^{\prime}$.
\end{itemize}
\end{proposition}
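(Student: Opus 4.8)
The plan is to establish each of the three items in turn, exploiting the standard real-analytic structure of the log-partition function of an exponential family. For item (1), I would invoke the classical fact (see~\cite{Brown:expfam}) that the cumulant generating function of an exponential family is real-analytic, hence infinitely differentiable, on the interior of its natural parameter space; Assumption~\ref{ass:sensmod} stipulates $\lambda_{n}(\btheta)<\infty$ for \emph{all} $\btheta\in\mathbb{R}^{M}$, so the natural parameter space is all of $\mathbb{R}^{M}$ and its interior is $\mathbb{R}^{M}$. The key technical point is justifying differentiation under the integral sign in~\eqref{ass:sensmod1}, which follows from a dominated-convergence argument: for $\btheta$ in a small ball, $e^{\btheta^{\top}g_{n}(\mathbf{y}_{n})}$ and all its $\btheta$-derivatives (which are polynomials in the components of $g_{n}(\mathbf{y}_{n})$ times $e^{\btheta^{\top}g_{n}(\mathbf{y}_{n})}$) are dominated by an integrable function, e.g. a bounded multiple of $e^{\btheta_{1}^{\top}g_{n}(\mathbf{y}_{n})}+e^{\btheta_{2}^{\top}g_{n}(\mathbf{y}_{n})}$ for suitable corner points $\btheta_1,\btheta_2$, using that $\lambda_{n}$ is finite at those points.

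For item (2), differentiating $\psi_{n}(\btheta)=\log\lambda_{n}(\btheta)$ once and pushing the derivative inside the integral (justified as in item (1)) gives
\begin{equation}
\nabla_{\btheta}\psi_{n}(\btheta)=\frac{1}{\lambda_{n}(\btheta)}\int g_{n}(\mathbf{y}_{n})e^{\btheta^{\top}g_{n}(\mathbf{y}_{n})}d\bnu_{n}(\mathbf{y}_{n})=\int g_{n}(\mathbf{y}_{n})\,e^{\btheta^{\top}g_{n}(\mathbf{y}_{n})-\psi_{n}(\btheta)}d\bnu_{n}(\mathbf{y}_{n}),
\end{equation}
which by~\eqref{ass:sensmod2} is exactly~\eqref{prop:analytic1}. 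For the monotonicity~\eqref{prop:analytic2}, I would observe that the Hessian $\nabla^{2}_{\btheta}\psi_{n}(\btheta)$ is the covariance matrix of $g_{n}(\mathbf{y}_{n})$ under $\bmu_{n}^{\btheta}$ (again by differentiating under the integral), hence positive semidefinite for every $\btheta$; this makes $\psi_{n}$ convex and $h_{n}=\nabla_{\btheta}\psi_{n}$ a monotone map, so the inequality $(\btheta-\btheta^{\prime})^{\top}(h_{n}(\btheta)-h_{n}(\btheta^{\prime}))\geq 0$ follows from the mean-value form $h_{n}(\btheta)-h_{n}(\btheta^{\prime})=\left(\int_{0}^{1}\nabla^{2}_{\btheta}\psi_{n}(\btheta^{\prime}+s(\btheta-\btheta^{\prime}))\,ds\right)(\btheta-\btheta^{\prime})$.

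For item (3), summing~\eqref{prop:analytic2} over $n$ and using~\eqref{sensmod5} gives $(\btheta-\btheta^{\prime})^{\top}(h(\btheta)-h(\btheta^{\prime}))=\sum_n(\btheta-\btheta^{\prime})^{\top}(h_n(\btheta)-h_n(\btheta^{\prime}))\geq 0$; the work is to upgrade this to a \emph{strict} inequality for $\btheta\neq\btheta^{\prime}$, and this is where global observability (Assumption~\ref{ass:globobs}) enters. I would connect the quadratic form to the KL divergence: a direct computation from~\eqref{sensmod3} and~\eqref{ass:globobs1} shows
\begin{equation}
\mathfrak{K}(\btheta,\btheta^{\prime})+\mathfrak{K}(\btheta^{\prime},\btheta)=(\btheta-\btheta^{\prime})^{\top}\left(h(\btheta)-h(\btheta^{\prime})\right),
\end{equation}
since $\mathfrak{K}(\btheta,\btheta^{\prime})=\psi(\btheta^{\prime})-\psi(\btheta)-(\btheta^{\prime}-\btheta)^{\top}h(\btheta)$ (a Bregman divergence of $\psi$) and the $\psi$ terms cancel when the two divergences are added. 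By Assumption~\ref{ass:globobs} both summands on the left are strictly positive whenever $\btheta\neq\btheta^{\prime}$, which yields~\eqref{prop:analytic3}. The main obstacle throughout is purely the measure-theoretic bookkeeping for differentiation under the integral sign; once that is in place, items (2) and (3) are short, and the Bregman-divergence identity linking the quadratic form to the symmetrized KL divergence is the one genuinely useful computation.
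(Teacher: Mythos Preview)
Your proposal is correct. For items (1) and (2) you are essentially unpacking the content of the results in~\cite{Brown:expfam} that the paper simply cites (Theorem~2.2 and Corollary~2.5 there), so the approach is the same, just more explicit.

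For item (3) you take a slightly different route than the paper. The paper argues by citation: Corollary~2.5 in~\cite{Brown:expfam} states that the monotonicity inequality is strict whenever $\bmu^{\btheta}\neq\bmu^{\btheta^{\prime}}$, and Assumption~\ref{ass:globobs} (positive KL divergence) guarantees exactly this distinctness. You instead compute the identity
\[
\mathfrak{K}(\btheta,\btheta^{\prime})+\mathfrak{K}(\btheta^{\prime},\btheta)=(\btheta-\btheta^{\prime})^{\top}\left(h(\btheta)-h(\btheta^{\prime})\right)
\]
via the Bregman representation of the KL divergence, so that strict positivity of the right-hand side follows \emph{directly} from Assumption~\ref{ass:globobs} without the intermediate ``measures are distinct'' step. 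Your argument is more self-contained and yields a quantitative link between the monotonicity gap and the symmetrized KL divergence; the paper's argument is shorter but relies on the reader consulting~\cite{Brown:expfam}. Both are valid and neither is materially harder than the other.
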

\begin{proof} The first assertion is an immediate consequence of the fact that the function $\psi_{n}(\btheta)$ associated with the exponential family $\{\bmu_{n}^{\btheta}\}$ is infinitely differentiable on the interior of the natural parameter space (the set on which the expectation in~\eqref{ass:sensmod1} exists), see Theorem 2.2 in~\cite{Brown:expfam}. The second assertion constitutes a well-known property of statistical exponential families (see Corollary 2.5 in~\cite{Brown:expfam}). The same corollary in~\cite{Brown:expfam} asserts that the inequality in~\eqref{prop:analytic2} is strict whenever the measures $\bmu^{\btheta}$ and $\bmu^{\btheta^{\prime}}$ are different for $\btheta\neq\btheta^{\prime}$, the latter being ensured by the positivity of the Kullback-Leibler divergences as in Assumption~\ref{ass:globobs}.
\end{proof}

The next proposition characterizes the information matrices (or Fisher matrices) associated with the sensing model and may be stated as follows (see~\cite{Brown:expfam} for a proof):
\begin{proposition}
\label{prop:inf} Let Assumption~\ref{ass:sensmod} hold. Then,
\begin{itemize}
\item[\textup{(1)}] For each $n$ and $\btheta\in\mathbb{R}^{M}$, let $I_{n}(\btheta)$ denote the Fisher information matrix associated with the exponential family $\{\bmu_{n}^{\btheta}\}$, i.e.,
\begin{equation}
\label{prop:inf1} I_{n}(\btheta)=-\int_{\mathbf{y}_{n}}\left(\nabla^{2}_{\btheta}\frac{d\bmu_{n}^{\btheta}}{d\bnu_{n}}(\mathbf{y}_{n})\right)d\bmu_{n}^{\btheta}(\mathbf{y}_{n}),
\end{equation}
where the expectation integral is to be interpreted entry-wise. Then, $I_{n}(\btheta)$ is positive semidefinite and satisfies $I_{n}(\btheta)=\nabla_{\btheta}\left(h_{n}(\btheta)\right)$ for all $\btheta$, with $h_{n}(\cdot)$ denoting the function in~\eqref{prop:analytic1}.
\item[\textup{(2)}] If, in addition, Assumption~\ref{ass:globobs} holds, the global Fisher information matrix $I(\btheta)$, given by,
\begin{equation}
\label{prop:inf2}I(\btheta)=-\int_{\mathbf{y}}\left(\nabla^{2}_{\btheta}\frac{d\bmu^{\btheta}}{d\bnu}(\mathbf{y})\right)d\bmu^{\btheta}(\mathbf{y}),
\end{equation}
is positive definite and satisfies
\begin{equation}
\label{prop:inf3}
I(\btheta)=\nabla^{2}_{\btheta}h(\btheta)=\sum_{n=1}^{N}\nabla^{2}_{\btheta}h_{n}(\btheta)=\sum_{n=1}^{N}I_{n}(\btheta)
\end{equation}
for all $\btheta\in\mathbb{R}^{M}$.
\end{itemize}
\end{proposition}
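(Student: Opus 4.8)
\emph{Proof proposal.} The plan is to obtain both assertions from the standard moment calculus for exponential families, invoking only the smoothness of the log-partition functions $\psi_{n}$ recorded in Proposition~\ref{prop:analytic}(1) and, for the positive-definiteness claim, the strict positivity of the Kullback--Leibler divergences in Assumption~\ref{ass:globobs}. For assertion~(1), I would first note that because $\psi_{n}$ is infinitely differentiable on the whole natural parameter space $\mathbb{R}^{M}$, differentiation under the integral sign is legitimate and all polynomial moments of $g_{n}$ under $\bmu_{n}^{\btheta}$ are finite. Then, starting from $h_{n}(\btheta)=\int g_{n}(\mathbf{y}_{n})\,d\bmu_{n}^{\btheta}(\mathbf{y}_{n})$ (Proposition~\ref{prop:analytic}(2)) and the density $d\bmu_{n}^{\btheta}/d\bnu_{n}=e^{\btheta^{\top}g_{n}(\mathbf{y}_{n})-\psi_{n}(\btheta)}$, differentiating once more in $\btheta$ and using $\nabla_{\btheta}\,e^{\btheta^{\top}g_{n}(\mathbf{y}_{n})-\psi_{n}(\btheta)}=\bigl(g_{n}(\mathbf{y}_{n})-h_{n}(\btheta)\bigr)e^{\btheta^{\top}g_{n}(\mathbf{y}_{n})-\psi_{n}(\btheta)}$ gives
\[
\nabla_{\btheta}h_{n}(\btheta)=\int\bigl(g_{n}(\mathbf{y}_{n})-h_{n}(\btheta)\bigr)\bigl(g_{n}(\mathbf{y}_{n})-h_{n}(\btheta)\bigr)^{\top}d\bmu_{n}^{\btheta}(\mathbf{y}_{n}),
\]
that is, $\nabla_{\btheta}h_{n}(\btheta)$ is the (finite) covariance matrix of $g_{n}(\mathbf{y}_{n})$ under $\bmu_{n}^{\btheta}$, hence symmetric positive semidefinite. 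To identify this with $I_{n}(\btheta)$ in~\eqref{prop:inf1} (the Hessian there being understood as that of the log-likelihood), I would observe that the $\btheta$-Hessian of $\log(d\bmu_{n}^{\btheta}/d\bnu_{n})(\mathbf{y}_{n})=\btheta^{\top}g_{n}(\mathbf{y}_{n})-\psi_{n}(\btheta)$ equals the deterministic matrix $-\nabla^{2}_{\btheta}\psi_{n}(\btheta)=-\nabla_{\btheta}h_{n}(\btheta)$, so that integrating against $\bmu_{n}^{\btheta}$ and negating reproduces $\nabla_{\btheta}h_{n}(\btheta)$; this yields $I_{n}(\btheta)=\nabla_{\btheta}h_{n}(\btheta)$ and the positive semidefiniteness in~(1).

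For assertion~(2), the key observation is that the product family $\{\bmu^{\btheta}\}$ is itself a $\btheta$-parameterized exponential family with sufficient statistic $g(\mathbf{y})=\sum_{n}g_{n}(\mathbf{y}_{n})$ and log-partition $\psi(\btheta)=\sum_{n}\psi_{n}(\btheta)$, see~\eqref{sensmod3}--\eqref{sensmod5}. Hence the computation above applies verbatim to $\{\bmu^{\btheta}\}$, giving $I(\btheta)=\nabla_{\btheta}h(\btheta)$ together with the covariance representation $I(\btheta)=\int(g(\mathbf{y})-h(\btheta))(g(\mathbf{y})-h(\btheta))^{\top}d\bmu^{\btheta}(\mathbf{y})$; the additive decomposition $\nabla_{\btheta}h(\btheta)=\sum_{n}\nabla_{\btheta}h_{n}(\btheta)=\sum_{n}I_{n}(\btheta)$ then follows from~\eqref{sensmod5} and~(1), which establishes the chain of identities in~\eqref{prop:inf3} and the positive semidefiniteness of $I(\btheta)$.

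It remains to upgrade this to strict positive definiteness using global observability, and this is where I expect the only genuine work to lie. I would argue by contradiction: if $\mathbf{v}^{\top}I(\btheta)\mathbf{v}=0$ for some $\mathbf{v}\neq\mathbf{0}$, then by the covariance representation $\mathbf{v}^{\top}g(\mathbf{y})$ has zero variance under $\bmu^{\btheta}$, hence equals a constant $c$ for $\bmu^{\btheta}$-a.e.\ $\mathbf{y}$; since $d\bmu^{\btheta}/d\bnu=e^{\btheta^{\top}g(\mathbf{y})-\psi(\btheta)}$ is strictly positive everywhere, $\bmu^{\btheta}$ and $\bnu$ are mutually absolutely continuous, so $\mathbf{v}^{\top}g(\mathbf{y})=c$ for $\bnu$-a.e.\ $\mathbf{y}$ as well. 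Factoring $e^{t\mathbf{v}^{\top}g(\mathbf{y})}=e^{tc}$ out of the normalizing integral then gives $\psi(\btheta+t\mathbf{v})=\psi(\btheta)+tc$ for every $t\in\mathbb{R}$, whence $d\bmu^{\btheta+t\mathbf{v}}/d\bnu=d\bmu^{\btheta}/d\bnu$ $\bnu$-a.s.\ and therefore $\bmu^{\btheta+t\mathbf{v}}=\bmu^{\btheta}$; taking any $t\neq0$ produces two distinct parameter values with $\mathfrak{K}(\btheta,\btheta+t\mathbf{v})=0$, contradicting Assumption~\ref{ass:globobs}. Hence $I(\btheta)$ is positive definite for every $\btheta$.

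The differentiation-under-the-integral steps and the covariance identities are routine once Proposition~\ref{prop:analytic}(1) is available, and the additive structure in~(2) is immediate; the one delicate point is the last argument, which hinges on transporting an almost-sure statement from $\bmu^{\btheta}$ to $\bnu$ via their mutual equivalence and on the exact cancellation of the linear term along $\mathbf{v}$ in the log-partition function. An alternative route to positive definiteness would combine the strict monotonicity of Proposition~\ref{prop:analytic}(3) with the continuity of $I(\cdot)$ — after first showing, again via family degeneracy, that a null direction of $I$ at one point is a null direction of $I$ everywhere — but since that is no shorter, I would present the direct Kullback--Leibler contradiction above.
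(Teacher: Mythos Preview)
Your argument is correct. The paper itself gives no proof of this proposition---it simply cites Brown's monograph on exponential families---so your self-contained derivation (identifying $\nabla_{\btheta}h_{n}(\btheta)$ with the covariance of $g_{n}$ under $\bmu_{n}^{\btheta}$, and then showing that a null direction of $I(\btheta)$ would force $\bmu^{\btheta+t\mathbf{v}}=\bmu^{\btheta}$ and hence zero KL divergence) is exactly the standard computation that citation is meant to supply; your parenthetical reading of~\eqref{prop:inf1} as the Hessian of the log-likelihood, rather than of the density itself, is the intended one and matches how $I_{n}(\btheta)$ is used elsewhere in the paper.
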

For the multi-agent statistical exponential families under consideration, the well-known Cram\'er-Rao characterization holds, and it may be shown that the mean-squared estimation error of any (centralized) estimator based on $t$ sets of observation samples from all the agents is lower bounded by the quantity $t^{-1}I^{-1}(\abtheta)$, where $\abtheta$ denotes the true value of the parameter. Making $t$ tend to $\infty$, the class of asymptotically efficient (optimal) estimators is defined as follows:
\begin{definition}
\label{def:asyeff} An asymptotically efficient estimator of $\abtheta$ is an $\{\mathcal{F}_{t}\}$-adapted sequence $\{\wtheta_{t}\}$, such that $\{\wtheta_{t}\}$ is asymptotically normal with asymptotic covariance $I^{-1}(\abtheta)$, i.e.,
\begin{equation}
\label{def:asyeff1}\sqrt{t+1}\left(\wtheta_{t}-\abtheta\right)\Longrightarrow\mathcal{N}\left(\mathbf{0},I^{-1}(\abtheta)\right),
\end{equation}
where $\Longrightarrow$ and $\mathcal{N}(\cdot,\cdot)$ denote convergence in distribution and the normal distribution respectively.
\end{definition}
\begin{remark}
\label{rem:mle}
\textup{Centralized estimators that are asymptotically efficient for the proposed multi-agent setting may be obtained using now-standard results in point estimation theory. For instance, the (centralized) maximum likelihood estimator is known to achieve asymptotic efficiency; specifically, there exists an $\{\mathcal{F}_{t}\}$-adapted sequence $\{\wtheta_{t}\}$, such that
\begin{equation}
\label{rem:mle1}\wtheta_{t}\in\argmax_{\btheta\in\mathbb{R}^{M}}\left(\sum_{s=0}^{t-1}\log\frac{d\bmu^{\btheta}}{d\bnu}(\mathbf{y}_{s})\right)~~\mbox{a.s. for all $t$},
\end{equation}
and $\{\wtheta_{t}\}$ is asymptotically normal with asymptotic covariance $I^{-1}(\abtheta)$. Note that, apart from being centralized, the maximum likelihood estimator, as implemented above, consists of a batch-form realization. To cope with this, extensive research has focused on the development of time-sequential (but centralized) estimators based on recursively processing the agents' observation data $\mathbf{y}_{t}$; asymptotically efficient recursive centralized estimators of the stochastic approximation type have been developed by several authors, see, for example,~\cite{Sakrison1965efficient,Hasminskii1974estimation,Pfanzagl1974efficient,Stone1975sequentialestimation,Fabian1978efficient}, that are asymptotically efficient.}
\end{remark}

We emphasize that the centralized (recursive or batch) estimators, as discussed above, are based on the availability of the entire set of agent observations at a centralized resource at all times, which further require the global model information (the statistics of the agent exponential families $\{\bmu^{\btheta}_{n}\}$) for all $n$ such that the nonlinear innovation gains driving the recursive estimators may be designed appropriately to achieve asymptotic efficiency. In contrast, the goal of this paper is to develop collaborative distributed asymptotically efficient estimators of $\abtheta$ at each agent $n$ of the network, in which, \begin{inparaenum}\item[(i)] the information is distributed, i.e., at a given instant of time $t$ each agent $n$ has access to its local sensed data $\mathbf{y}_{n}(t)$ only; \item[(ii)] to start with, each agent $n$ is only aware of its local sensing model $\{\bmu^{\btheta}_{n}\}$ only; and, \item[(iii)] the agents may only collaborate by exchanging information over a (sparse) pre-defined communication network, where inter-agent communication and observation sampling occurs at the same rate, i.e., each agent $n$ may only exchange one round of messages with its designated communication neighbors per sampling epoch.\end{inparaenum} To this end, the proposed estimators consist of simultaneous distributed local estimate update and distributed local gain refinement (learning) at each network agent $n$, with closed-loop interaction between the estimation and learning processes. From a technical point of view, in contrast to centralized stochastic approximation based estimators, the estimators developed in the paper are of the distributed nonlinear stochastic approximation type with necessarily mixed time-scale dynamics; the mixed time-scale dynamics arise as a result of suitably crafting the relative intensities of the potentials for local collaboration and local innovation, necessary for achieving asymptotic efficiency. Distributed estimators of mixed time-scale dynamics have been introduced and studied in~\cite{KarMouraRamanan-Est-2008,KarMoura-LinEst-JSTSP-2011}; we refer to them as \textit{consensus + innovations} estimators.

\section{Asymptotically Efficient Distributed Estimator}
\label{sec:asefdistest}
In this section, we provide distributed sequential estimators for $\abtheta$ that are not only consistent but asymptotically optimal, in that, the local asymptotic covariances at each agent coincide with the inverse of the centralized Fisher information rate $I^{-1}(\abtheta)$ associated with the exponential observation statistics in consideration. Other than challenges encountered in the distributed implementation, a major difficulty in obtaining such asymptotically efficient distributed estimators concerns the design of the local estimator or innovation gains (to be made precise later); in particular, in contrast to optimal estimation in linear statistical models~\cite{KarMoura-LinEst-JSTSP-2011,Kar-AdaptiveDistEst-SICON-2012}, in the nonlinear non-Gaussian setting, the innovation gains that achieve asymptotic efficiency are necessarily dependent on the true value $\abtheta$ of the parameter to be estimated. Since the value of $\abtheta$ (and hence the optimal estimator gains) are not available in advance. We propose a distributed estimation approach that involves a distributed online gain learning procedure that proceeds in conjunction with the sequential estimation task. As a result, a somewhat closed-loop interaction occurs between the gain learning and parameter estimation that is reminiscent of the certainty equivalence approach to adaptive estimation and control--although the analysis methodology is significantly different from classical techniques used in adaptive processing, primarily due to the distributed nature of our problem and its mixed time-scale dynamics.

Specifically, the main idea in the proposed distributed estimation methodology is to generate simultaneously two distributed estimators $\{\bx_{n}(t)\}$ and $\{\mathbf{x}_{n}(t)\}$ at each agent $n$; the former, the auxiliary estimate sequences $\{\bx_{n}(t)\}$, are driven by constant (non-adaptive) innovation gains, and, while supposed to be consistent for $\abtheta$, are suboptimal in the sense of asymptotic covariance. The consistent auxiliary estimates are used to generate the sequence of optimal adaptive innovation gains through another online distributed learning procedure; the resulting adaptive gain process is in turn used to drive the evolution of the desired estimate sequences $\{\mathbf{x}_{n}(t)\}$ at each agent $n$, which will be shown to be asymptotically efficient from the asymptotic covariance viewpoint. As will be seen below, we emphasize here that the construction of the auxiliary estimate sequences, the adaptive gain refining, and the generation of the optimal estimators are all executed simultaneously.

\subsection{Algorithms and Assumptions}
\label{subsec:alg} The proposed optimal distributed estimation methodology consists of the following three simultaneous update processes at each agent $n$: \begin{inparaenum}\item[(i)] auxiliary estimate sequence $\{\bx_{n}(t)\}$ generation; \item[(ii)] adaptive gain refinement; and \item[(iii)] optimal estimate sequence $\{\mathbf{x}_{n}(t)\}$ generation.\end{inparaenum} Formally:

\noindent\textbf{Auxiliary Estimate Generation}: Each agent $n$ maintains an $\{\mathcal{F}_{t}\}$-adapted $\mathbb{R}^{M}$-valued estimate sequence $\{\bx_{n}(t)\}$ for $\abtheta$, recursively updated in a distributed fashion as follows:
\begin{equation}
\label{aux:1}
\bx_{n}(t+1)=\bx_{n}(t)-\beta_{t}\sum_{l\in\Omega_{n}(t)}\left(\bx_{n}(t)-\bx_{l}(t)\right)+\alpha_{t}\left(g_{n}(\mathbf{y}_{n}(t))-h_{n}(\bx_{n}(t))\right),
\end{equation}
where $\{\beta_{t}\}$ and $\{\alpha_{t}\}$ correspond to appropriate time-varying weighting factors for the agreement (consensus) and innovation (new observation) potentials, respectively, whereas, $\Omega_{n}(t)$ denotes the $\{\mathcal{F}_{t+1}\}$-adapted time-varying random neighborhood of agent $n$ at time $t$.

\noindent\textbf{Optimal Estimate Generation}: In addition, each agent $n$ generates an optimal (or refined) estimate sequence $\{\mathbf{x}_{n}(t)\}$, which is also $\{\mathcal{F}_{t}\}$-adapted and evolves as
\begin{equation}
\label{opt:1} \mathbf{x}_{n}(t+1)=\mathbf{x}_{n}(t)-\beta_{t}\sum_{l\in\Omega_{n}(t)}\left(\mathbf{x}_{n}(t)-\mathbf{x}_{l}(t)\right)+\alpha_{t}K_{n}(t)\left(g_{n}(\mathbf{y}_{n}(t))-h_{n}(\mathbf{x}_{n}(t))\right).
\end{equation}
Note that the key difference between the estimate updates in~\eqref{aux:1} and~\eqref{opt:1} is in the use of adaptive (time-varying) gains $K_{n}(t)$ in the innovation part in the latter, as opposed to static gains in the former. Specifically, the adaptive gain sequence $\{K_{n}(t)\}$ at an agent $n$ is an $\{\mathcal{F}_{t}\}$-adapted $\mathbb{R}^{M\times M}$-valued process which is generated according to a distributed learning process as follows.

\noindent\textbf{Adaptive Gain Refinement}: The $\{\mathcal{F}_{t}\}$-adapted gain sequence $\{K_{n}(t)\}$ at an agent $n$ is generated according to a distributed learning process, driven by the auxiliary estimates $\{\bx_{n}(t)\}$ obtained in~\eqref{aux:1}, as follows:
\begin{equation}
\label{gain1}K_{n}(t)=\left(G_{n}(t)+\varphi_{t}I_{M}\right)^{-1}~~~\forall n,
\end{equation}
where, $\{\varphi_{t}\}$ is a deterministic sequence of positive numbers such that $\varphi_{t}\rightarrow 0$ as $t\rightarrow\infty$, and, each agent $n$ maintains another $\{\mathcal{F}_{t}\}$-adapted $\mathbb{S}_{+}^{M}$-valued process $\{G_{n}(t)\}$ evolving in a distributed fashion as
\begin{equation}
\label{gain2}G_{n}(t+1)=G_{n}(t)-\beta_{t}\left(G_{n}(t)-G_{l}(t)\right)+\alpha_{t}\left(I_{n}(\bx_{n}(t))-G_{n}(t)\right)
\end{equation}
for all $t$, with some positive semidefinite initial condition $G_{n}(0)$ and $I_{n}(\cdot)$ denoting the local Fisher information matrix, see~\eqref{prop:inf1}.

\begin{assumption}
\label{ass:conn} The $\{\mathcal{F}_{t+1}\}$-adapted sequence $\{L_{t}\}$ of communication network Laplacians (modeling the agent communication neighborhoods $\Omega_{n}(t)$-s at each time $t$) is temporally i.i.d. with $L_{t}$ being independent of $\mathcal{F}_{t}$ for each $t$. Further, the sequence $\{L_{t}\}$ is connected on the average, i.e., $\lambda_{2}(\overline{L})>0$, where $\overline{L}=\East[L_{t}]$ denotes the mean Laplacian.
\end{assumption}
\begin{assumption}
\label{ass:weight} The weight sequences $\{\beta_{t}\}$ and $\{\alpha_{t}\}$ satisfy
\begin{equation}
\label{weight}
\alpha_{t}=\frac{1}{(t+1)}~~~\mbox{and}~~~\beta_{t}=\frac{b}{(t+1)^{\tau_{2}}},
\end{equation}
where $b>0$ and $0<\tau_{2}< 1/2$.

Further, the sequence $\{\varphi_{t}\}$ in~\eqref{gain1} satisfies
\begin{equation}
\label{ass:weight3}
\lim_{\tri}(t+1)^{\mu_{2}}\varphi_{t}=0
\end{equation}
for some positive constant $\mu_{2}$.
\end{assumption}

The following weak linear growth condition on the functions $h_{n}(\cdot)$ driving the (nonlinear) innovations in~\eqref{aux:1}-\eqref{opt:1} will be assumed:

\begin{assumption}
\label{ass:lingrowth} For each $\btheta\in\mathbb{R}^{M}$, there exist positive constants $c_{1}^{\btheta}$ and $c_{2}^{\btheta}$, such that, for each $n$, function $h_{n}(\cdot)$ in~\eqref{prop:analytic1} satisfies the local linear growth condition,
\begin{equation}
\label{ass:lingrowth1} \left\|h_{n}(\btheta^{\prime})-h_{n}(\btheta)\right\|\leq c_{1}^{\btheta}\left\|\btheta^{\prime}-\btheta\right\|+c_{2}^{\btheta},
\end{equation}
for all $\btheta^{\prime}\in\mathbb{R}^{M}$.
\end{assumption}


\subsection{Main Results}
\label{subsec:mainres} We formally state the main results of the paper, the proofs appearing in Section~\ref{sec:proof_main_res}.

\begin{theorem}
\label{th:estcons} Let Assumptions~\ref{ass:globobs},\ref{ass:conn},\ref{ass:lingrowth} and~\ref{ass:weight} hold. Then, for each $n$ the estimate sequence $\{\mathbf{x}_{n}(t)\}$ is strongly consistent. In particular, we have
\begin{equation}
\label{th:estcons1}
\Past\left(\lim_{t\rightarrow\infty}(t+1)^{\tau}\left\|\mathbf{x}_{n}(t)-\mathbf{\theta}^{\ast}\right\|=0\right)=1
\end{equation}
for each $n$ and $\tau\in [0,1/2)$.
\end{theorem}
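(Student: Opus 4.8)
The three recursions form a cascade: the auxiliary estimates $\{\bx_{n}(t)\}$ in~\eqref{aux:1} and the matrices $\{G_{n}(t)\}$ in~\eqref{gain2} evolve autonomously, their updates involving neither $\{\mathbf{x}_{n}(t)\}$ nor the gains $\{K_{n}(t)\}$, whereas the optimal estimates $\{\mathbf{x}_{n}(t)\}$ in~\eqref{opt:1} are driven by $K_{n}(t)=(G_{n}(t)+\varphi_{t}I_{M})^{-1}$. The argument therefore proceeds in three stages, each an instance of the general convergence and pathwise rate-of-convergence theory for distributed mixed time-scale recursions developed in Section~\ref{sec:genconsest}. \emph{Stage 1 (auxiliary estimator).} Writing $\bx(t)=\vecc(\bx_{n}(t))$, decompose $\bx(t)=\bx(t)_{\C}+\bx(t)_{\PC}$ with $\bx(t)_{\C}=\vone_{N}\otimes\Xavg(t)$. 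Since $\beta_{t}$ decays strictly slower than $\alpha_{t}$, the consensus potential acts on a faster time scale; using mean connectivity ($\lambda_{2}(\OL)>0$, Assumption~\ref{ass:conn}) for contraction on $\PC$ and the linear growth bound (Assumption~\ref{ass:lingrowth}) to control the innovations, one shows $\Past\big((t+1)^{\tau}\|\bx(t)_{\PC}\|\to 0\big)=1$ for every $\tau\in[0,1/2)$ — the restriction $0<\tau_{2}<1/2$ being exactly what renders the associated noise accumulation negligible at these rates. The averaged component $\Xavg(t)$ then obeys, up to pathwise $o(\alpha_{t})$ terms, a centralized stochastic approximation whose conditional mean drift is $\alpha_{t}N^{-1}\big(h(\abtheta)-h(\Xavg(t))\big)$ plus a vanishing disagreement-induced error (here $h_{n}(\abtheta)=\East[g_{n}(\mathbf{y}_{n}(t))\mid\mathcal{F}_{t}]$); by the strict monotonicity~\eqref{prop:analytic3}, $\abtheta$ is the unique equilibrium, the Lyapunov function $\|x-\abtheta\|^{2}$ has strictly negative drift away from $\abtheta$, and a standard boundedness argument (again using Assumption~\ref{ass:lingrowth} and monotonicity) gives $\bx_{n}(t)\to\abtheta$ a.s.\ for each $n$. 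Linearizing $h$ around $\abtheta$ (with $\nabla_{\btheta}h(\abtheta)=I(\abtheta)\succ 0$, Proposition~\ref{prop:inf}) and invoking the mixed time-scale rate result of Section~\ref{sec:genconsest} upgrades this to $\Past\big((t+1)^{\tau}\|\bx_{n}(t)-\abtheta\|\to 0\big)=1$ for $\tau\in[0,1/2)$.

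\emph{Stage 2 (gain convergence).} Since $I_{n}(\cdot)=\nabla_{\btheta}h_{n}(\cdot)$ is continuous (Propositions~\ref{prop:analytic}--\ref{prop:inf}) and $\bx_{n}(t)\to\abtheta$, the input to~\eqref{gain2} satisfies $I_{n}(\bx_{n}(t))\to I_{n}(\abtheta)$. Recursion~\eqref{gain2} is a matrix-valued, linear consensus + innovations recursion with vanishing input perturbation, so the same decomposition as in Stage~1 yields $G_{n}(t)\to N^{-1}\sum_{l=1}^{N}I_{l}(\abtheta)=N^{-1}I(\abtheta)$ a.s.\ for every $n$. As $I(\abtheta)\succ 0$ (Proposition~\ref{prop:inf}) and $\varphi_{t}\to 0$, the gains $K_{n}(t)$ in~\eqref{gain1} are a.s.\ well defined, uniformly bounded for all $t$ large, and $K_{n}(t)\to N\,I^{-1}(\abtheta)$ a.s.

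\emph{Stage 3 (optimal estimator).} With the gains now convergent and eventually uniformly positive definite, treat~\eqref{opt:1} as a distributed stochastic approximation; as before the disagreement $\mathbf{x}(t)_{\PC}$ vanishes at the polynomial rates in question, the gain factor being bounded by Stage~2 and the innovation growing at most linearly by Assumption~\ref{ass:lingrowth}. For the averaged component the conditional mean drift is $\alpha_{t}N^{-1}\sum_{n}K_{n}(t)\big(h_{n}(\abtheta)-h_{n}(\Xavg(t))\big)$ plus vanishing errors. The delicate point is that in finite time the per-agent gains $K_{n}(t)$ are distinct, so strict monotonicity of $h$ cannot be applied directly to this weighted sum. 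Writing $K_{n}(t)=N I^{-1}(\abtheta)+\Delta_{n}(t)$ with $\Delta_{n}(t)\to 0$, the leading term contributes $\alpha_{t}I^{-1}(\abtheta)\big(h(\abtheta)-h(\Xavg(t))\big)$, which under the \emph{weighted} Lyapunov function $V(x)=(x-\abtheta)^{\top}I(\abtheta)(x-\abtheta)$ becomes $-2\alpha_{t}(x-\abtheta)^{\top}\big(h(x)-h(\abtheta)\big)<0$ by~\eqref{prop:analytic3} — the weighting is chosen precisely so the factor $I(\abtheta)$ cancels the $I^{-1}(\abtheta)$ from the gain limit — while the residual $\alpha_{t}\sum_{n}\Delta_{n}(t)\big(h_{n}(\abtheta)-h_{n}(\Xavg(t))\big)$ is $o(\alpha_{t})\cdot O(1+\|x-\abtheta\|)$ by Assumption~\ref{ass:lingrowth} and is dominated by the main term for $t$ large. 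This gives $\mathbf{x}_{n}(t)\to\abtheta$ a.s.; near $\abtheta$ one has $N^{-1}\sum_{n}K_{n}(t)\big(h_{n}(\abtheta)-h_{n}(x)\big)\approx -I^{-1}(\abtheta)\sum_{n}I_{n}(\abtheta)(x-\abtheta)=-(x-\abtheta)$ by~\eqref{prop:inf3}, so the averaged recursion reduces to $\Xavg(t+1)-\abtheta\approx(1-\alpha_{t})(\Xavg(t)-\abtheta)+\alpha_{t}\xi_{t}$ with an asymptotically martingale-difference $\xi_{t}$, and the mixed time-scale rate-of-convergence result yields $\Past\big((t+1)^{\tau}\|\mathbf{x}_{n}(t)-\abtheta\|\to 0\big)=1$ for all $\tau\in[0,1/2)$.

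The main obstacle throughout is this closed-loop, certainty-equivalence coupling together with the mixed time scales: one must quantify the rate at which $K_{n}(t)\to N I^{-1}(\abtheta)$ and show that this perturbation, the inter-agent gain heterogeneity, and the slow ($\tau_{2}<1/2$) consensus time scale do not degrade the target rate. This is exactly where the pathwise strong-approximation estimates for the network-dependent consensus term — showing it converges to a martingale-difference process fast enough — do the heavy lifting, and it is the part that lies outside classical (distributed) stochastic approximation.
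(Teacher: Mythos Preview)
Your three-stage cascade is the right architecture and matches the paper's logical flow: Lemma~\ref{lm:auxcons} (auxiliary consistency), Lemma~\ref{lm:gainconv} (gain convergence), then the analysis of $\{\mathbf{x}_{n}(t)\}$. Stages~1 and~2 are essentially correct, though note that for Stage~1 the paper (via Theorem~\ref{th:genrate}) only establishes \emph{some} $\mu_{0}>0$ for the auxiliary estimates, not the full range $[0,1/2)$; fortunately only some positive rate is needed to feed Stage~2.

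The substantive discrepancy is in Stage~3. You repeatedly invoke ``the mixed time-scale rate result of Section~\ref{sec:genconsest}'' to obtain the full range $\tau\in[0,1/2)$, but Theorem~\ref{th:genrate} (the only rate statement there) asserts merely the existence of \emph{some} $\mu>0$; it does not deliver every $\tau<1/2$. Your sketched route---linearize the averaged recursion to $\Xavg(t+1)-\abtheta\approx(1-\alpha_{t})(\Xavg(t)-\abtheta)+\alpha_{t}\xi_{t}$ and read off the rate---could in principle be made rigorous, but it is not what Section~\ref{sec:genconsest} supplies, and you have not accounted for the rate at which the ``$\approx$'' error must vanish.

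The paper's actual mechanism is different and more explicit. It introduces an auxiliary \emph{linear} distributed process $\{\bv_{n}(t)\}$ (Corollary~\ref{corr:linz}) obtained by replacing $h_{n}(\mathbf{x}_{n}(t))-h_{n}(\abtheta)$ with $I_{n}(\abtheta)(\bv_{n}(t)-\abtheta)$ in~\eqref{opt:1}, and then imports a result on linear distributed estimators (Theorem~\ref{th:optlindist}, from~\cite{Kar-AdaptiveDistEst-SICON-2012}) to get $(t+1)^{\tau}\|\bv_{n}(t)-\abtheta\|\to 0$ for all $\tau\in[0,1/2)$ directly. The nonlinear-to-linear gap is then controlled by Lemma~\ref{lm:dev}, which shows $(t+1)^{\overline{\tau}}\|\mathbf{x}_{n}(t)-\bv_{n}(t)\|\to 0$ for some $\overline{\tau}>1/2$; this step uses the consistency already established (Corollary~\ref{corr:xconv}, itself a consequence of Theorem~\ref{th:genrate} applied with the now-convergent gains) together with the second-order Taylor bound~\eqref{lm:dev9}. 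The theorem then follows by the triangle inequality. So the ``heavy lifting'' you allude to at the end is done not by a sharper nonlinear rate argument but by this linearized comparison plus the imported linear theory---an ingredient your plan does not identify.
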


The consistency in Theorem~\ref{th:estcons} is order optimal in that~\eqref{th:estcons1} fails to hold with an exponent $\tau\geq 1/2$ for any (including centralized) estimation procedure.


The next result concerns the asymptotic efficiency of the estimates generated by the proposed distributed scheme.
\begin{theorem}
\label{th:estn} Let Assumptions~\ref{ass:globobs},\ref{ass:conn},\ref{ass:lingrowth} and~\ref{ass:weight} hold. Then, for each $n$ we have
\begin{equation}
\label{th:estn200}
\sqrt(t+1)\left(\mathbf{x}_{n}(t)-\mathbf{\theta}^{\ast}\right)\Longrightarrow\mathcal{N}\left(\mathbf{0},I^{-1}(\abtheta)\right),
\end{equation}
where $\mathcal{N}(\cdot,\cdot)$ and $\Longrightarrow$ denote the Gaussian distribution and weak convergence, respectively.
\end{theorem}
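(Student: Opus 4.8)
The plan is to establish asymptotic normality of the optimal estimate sequence $\{\mathbf{x}_{n}(t)\}$ via a three-step program: (i) obtain strong consistency and a sharp (order $t^{-1/2+\epsilon}$) pathwise convergence rate for the auxiliary estimates $\{\bx_{n}(t)\}$ and for the matrix sequences $\{G_{n}(t)\}$; (ii) use these to show that the adaptive gains $K_{n}(t)$ converge a.s.\ to the optimal value $I^{-1}(\abtheta)$ at a rate fast enough that the gain-estimation error is asymptotically negligible at the $\sqrt{t+1}$ scale; and (iii) analyze the resulting (nearly) standard stochastic-approximation recursion for $\mathbf{x}_{n}(t)$, decomposing the error into a consensus (disagreement) component that is shown to decay faster than $t^{-1/2}$ and a component evolving on the consensus subspace that, after the usual linearization around $\abtheta$, reduces to a Robbins--Monro recursion with gain matrix $I^{-1}(\abtheta)$ driven by the i.i.d.\ innovation martingale difference $g(\mathbf{y}_{t})-h(\abtheta)$, whose limiting covariance is exactly $I(\abtheta)$; the Gauss--Markov-type cancellation $I^{-1}(\abtheta)\,I(\abtheta)\,I^{-1}(\abtheta)=I^{-1}(\abtheta)$ then yields the stated asymptotic covariance.

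For step (i) I would first invoke Theorem~\ref{th:estcons}-type arguments (the consistency machinery of Section~\ref{sec:genconsest}) applied to~\eqref{aux:1}: using the strict monotonicity of $h(\cdot)$ from Proposition~\ref{prop:analytic}, the local linear growth of Assumption~\ref{ass:lingrowth}, and the mean connectivity of Assumption~\ref{ass:conn}, one gets $\bx_{n}(t)\to\abtheta$ a.s.\ together with a pathwise rate $\|\bx_{n}(t)-\abtheta\| = o((t+1)^{-\tau})$ for every $\tau<1/2$. Plugging this into~\eqref{gain2} and using continuity of $I_{n}(\cdot)$ (which follows from infinite differentiability of $\psi_{n}$, Proposition~\ref{prop:analytic}(1)), a perturbed-consensus argument shows $G_{n}(t)\to I(\abtheta)$ a.s.: the innovation term $\alpha_t(I_n(\bx_n(t))-G_n(t))$ drives $G_n(t)$ toward the Ces\`aro average of $I_n(\bx_n(t))\to I_n(\abtheta)$, while the faster consensus term $\beta_t$ (with $\tau_2<1/2$) forces agreement across agents, so the common limit is $\sum_n I_n(\abtheta)=I(\abtheta)$ by Proposition~\ref{prop:inf}(2). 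Since $\varphi_t\to 0$ and $I(\abtheta)\in\mathbb{S}_{++}^{M}$, the matrices $G_n(t)+\varphi_t I_M$ are eventually uniformly invertible along a.e.\ path, so $K_n(t)=(G_n(t)+\varphi_t I_M)^{-1}\to I^{-1}(\abtheta)$ a.s., and one needs the quantitative bound $\|K_n(t)-I^{-1}(\abtheta)\| = o((t+1)^{-\delta})$ for some $\delta>0$, obtained from the rate for $\bx_n(t)$ and the decay rate \eqref{ass:weight3} of $\varphi_t$.

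For step (iii) I would write the optimal recursion~\eqref{opt:1} in stacked form, split $\mathbf{x}(t)=\mathbf{x}_{\C}(t)+\mathbf{x}_{\PC}(t)$, and show by a Lyapunov/comparison argument (using that $\beta_t/\alpha_t = b(t+1)^{1-\tau_2}\to\infty$, so consensus acts on a faster timescale) that the disagreement $\|\mathbf{x}_{\PC}(t)\| = o((t+1)^{-1/2})$ a.s., hence contributes nothing to the limit law. On the consensus subspace the dynamics of $\mathbf{x}^a(t):=\mathbf{x}_{\C}(t)^a$ reduce to $\mathbf{x}^a(t+1)=\mathbf{x}^a(t)-\tfrac{1}{t+1}\,\bar{K}(t)\big(h(\mathbf{x}^a(t))-g(\mathbf{y}_t)\big)+\text{(small coupling terms)}$ with $\bar K(t)=\tfrac1N\sum_n K_n(t)\to I^{-1}(\abtheta)$; linearizing $h$ via $h(\mathbf{x}^a(t))-h(\abtheta) = I(\abtheta)(\mathbf{x}^a(t)-\abtheta)+o(\|\mathbf{x}^a(t)-\abtheta\|)$ puts this in the canonical form to which a standard CLT for stochastic approximation (e.g.\ the Polyak-type / Fabian-type theorems for recursions with gain $t^{-1}\Gamma$ where $\Gamma\,\nabla h(\abtheta) = I_M$) applies: here $\Gamma = I^{-1}(\abtheta)$ exactly meets the eigenvalue condition, the martingale-difference noise $g(\mathbf{y}_t)-h(\abtheta)$ has covariance $I(\abtheta)$ by Proposition~\ref{prop:inf}, and the limiting covariance is $\int_0^\infty e^{-u(\,\Gamma\nabla h-\frac12 I\,)}\,\Gamma I(\abtheta)\Gamma^\top\,e^{-u(\cdots)^\top}du = \tfrac12 I^{-1}(\abtheta)\cdot 2 = I^{-1}(\abtheta)$, so that $\sqrt{t+1}(\mathbf{x}^a(t)-\abtheta)\Rightarrow\mathcal{N}(\mathbf{0},I^{-1}(\abtheta))$; combined with the negligible disagreement this gives the claim for each $n$.

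The main obstacle I expect is controlling the \emph{coupling between the three simultaneous recursions at the $\sqrt{t+1}$ resolution}: the recursion for $\mathbf{x}_n(t)$ is non-Markovian because $K_n(t)$ depends on the whole past through $\bx_n(\cdot)$ and $G_n(\cdot)$, and the consensus term in~\eqref{opt:1} is not a martingale difference but a history-dependent network potential, so the standard stochastic-approximation CLT does not apply off the shelf. The delicate part is the pathwise strong-approximation argument showing the consensus contribution converges to a martingale-difference process fast enough (faster than $t^{-1/2}$) that it can be absorbed into the error term, together with verifying that the gain perturbation $(K_n(t)-I^{-1}(\abtheta))(g_n(\mathbf{y}_n(t))-h_n(\mathbf{x}_n(t)))$, once multiplied by $\alpha_t=1/(t+1)$ and accumulated, is $o((t+1)^{-1/2})$ — this is exactly where the rate estimates from step (i) and the decay condition~\eqref{ass:weight3} on $\varphi_t$ must be balanced carefully.
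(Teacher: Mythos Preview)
Your overall strategy---consensus/disagreement decomposition followed by a stochastic-approximation CLT on the averaged process---is sound in spirit but differs from the paper's route. The paper instead introduces an auxiliary \emph{distributed linear} process $\{\bv_{n}(t)\}$ (same consensus structure, but with $h_{n}$ replaced by its linearization $I_{n}(\abtheta)(\cdot-\abtheta)$ and innovation $g_{n}(\mathbf{y}_{n}(t))-h_{n}(\abtheta)$), invokes an existing asymptotic-normality result for such distributed linear recursions to obtain $\sqrt{t+1}(\bv_{n}(t)-\abtheta)\Rightarrow\mathcal{N}(\mathbf{0},I^{-1}(\abtheta))$, and then proves $\|\mathbf{x}_{n}(t)-\bv_{n}(t)\|=o((t+1)^{-\overline{\tau}})$ for some $\overline{\tau}>1/2$ pathwise, transferring the weak limit. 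This outsources the delicate interaction of the consensus potential with the CLT scaling to the linear theory, whereas you propose to handle it directly.

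There is, however, a concrete error in your step~(i)--(ii) that is not cosmetic. The consensus dynamics in~\eqref{gain2} drive $G_{n}(t)$ to the network \emph{average}, not the sum, of the local Fisher matrices: the averaged recursion reads $G^{a}_{t+1}=(1-\alpha_{t})G^{a}_{t}+\alpha_{t}\cdot(1/N)\sum_{n}I_{n}(\bx_{n}(t))$, so $G_{n}(t)\to (1/N)I(\abtheta)$ and hence $K_{n}(t)\to N\,I^{-1}(\abtheta)$, not $I^{-1}(\abtheta)$. This factor of $N$ is precisely what makes the scheme work: in the averaged dynamics for $\mathbf{x}^{a}(t)$ one obtains $(1/N)\sum_{n}K_{n}(t)(g_{n}-h_{n})\approx (1/N)\cdot N I^{-1}(\abtheta)\cdot(g-h)=I^{-1}(\abtheta)(g-h)$, so that the effective drift matrix $\Gamma\nabla h(\abtheta)$ equals $I_{M}$ and the Fabian eigenvalue condition (smallest eigenvalue $>1/2$) is met. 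With your stated limit the effective drift matrix would be $(1/N)I_{M}$, the eigenvalue condition would fail for every $N\geq 2$, and the CLT would not deliver the claimed covariance (or any finite one). Your shorthand $\bar K(t)(h-g)$ with $\bar K(t)=(1/N)\sum_n K_n(t)$ also hides a cross term $(1/N)\sum_{n}(K_{n}(t)-K^{\ast})(g_{n}-h_{n})$ that must itself be shown to be $o((t+1)^{-1/2})$; once the $N$ factor is corrected and this cross term is controlled (via the rate on $K_{n}(t)$ you outline), your direct argument can be completed.
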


\noindent{\bf Discussion}: We discuss some key aspects of the distributed recursive scheme~\eqref{aux:1}-\eqref{gain2}. First, note that the distributed

\section{A Generic Consistent Distributed Estimator}
\label{sec:genconsest} With a view to understanding the asymptotic behavior of the auxiliary estimate processes $\{\bx_{n}(t)\}$, $n=1,\cdots,N$, introduced in Section~\ref{subsec:alg}, see~\eqref{aux:1}, we study a somewhat more general class of distributed estimate processes with time-varying local innovation gains. Other than establishing consistency of these estimates (see Theorem~\ref{th:genest}), we obtain pathwise convergence rate asymptotics of the estimate processes to $\abtheta$ (see Theorem~\ref{th:genrate}). These latter convergence rate results will be used to analyze the impact of the auxiliary estimates in the adaptive gain computation~\eqref{gain1}-\eqref{gain2}.
\begin{theorem}
\label{th:genest} For each $n$, let $\{\mathbf{z}_{n}(t)\}$ be an $\mathbb{R}^{M}$-valued $\{\mathcal{F}_{t}\}$-adapted process (estimator) evolving as follows:
\begin{equation}
\label{th:genest1}\mathbf{z}_{n}(t+1)=\mathbf{z}_{n}(t)-\beta_{t}\sum_{n\in\Omega_{n}(t)}\left(\mathbf{z}_{n}(t)-\mathbf{z}_{l}(t)\right)+\alpha_{t}K_{n}(t)\left(g_{n}(\mathbf{y}_{n}(t)-h_{n}(\mathbf{z}_{n}(t))\right).
\end{equation}
Suppose Assumptions~\ref{ass:sensmod},\ref{ass:lingrowth} and \ref{ass:conn} on the network system model hold, and the weight sequences $\{\beta_{t}\}$ and $\{\alpha_{t}\}$ satisfy Assumption~\ref{ass:weight}. Additionally, let the matrix gain processes $\{K_{n}(t)\}$ be $\mathbb{S}_{+}^{M}$-valued $\{\mathcal{F}_{t}\}$-adapted, and there exist a positive definite matrix $\mathcal{K}$ and a constant $\tau_{3}>0$, such that the gain processes $\{K_{n}(t)\}$ converge uniformly to $\mathcal{K}$ at rate $\tau_{3}$, i.e., for each $\delta>0$, there exists a deterministic time $t_{\delta}$, such that for all $n$
\begin{equation}
\label{lm:bg1}\Past\left(\sup_{t\geq t_{\delta}}(t+1)^{\tau_{3}}\left\|K_{n}(t)-\mathcal{K}\right\|\leq\delta\right)=1.
\end{equation}
Then, for each $n$, $\{\mathbf{z}_{n}(t)\}$ is a consistent estimator of $\theta^{\ast}$, i.e., $\mathbf{z}_{n}(t)\rightarrow\btheta^{\ast}$ as $t\rightarrow\infty$ a.s.
\end{theorem}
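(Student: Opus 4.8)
The plan is to analyze the recursion~\eqref{th:genest1} by splitting the state into its \emph{consensus} component $\mathbf{z}_{\C}(t)$ (the network-averaged estimate) and its \emph{disagreement} component $\mathbf{z}_{\PC}(t)$, and to show that (i) the disagreement vanishes fast enough that the network behaves asymptotically like a single averaged recursion, and (ii) the averaged recursion is a (nonlinear, perturbed) stochastic approximation that converges to $\abtheta$ by virtue of the strict monotonicity in Proposition~\ref{prop:analytic}~(3). Stacking, write $\mathbf{z}(t)=\vecc(\mathbf{z}_n(t))$ and let $\widehat{L}_t = L_t\otimes I_M$; the consensus term is $-\beta_t\widehat{L}_t\mathbf{z}(t)$ and, since $\widehat{L}_t$ annihilates $\C$ and is positive semidefinite with $\lambda_2(\overline L)>0$, the consensus/disagreement decomposition is natural. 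First I would establish a pathwise bound $\|\mathbf{z}(t)\|=O(1)$ a.s. (or at least a moment bound), using Assumption~\ref{ass:lingrowth} (weak linear growth of $h_n$), the summability structure $\sum\alpha_t^2<\infty$, $\alpha_t=1/(t+1)$, the boundedness of the gains $K_n(t)$ (which follows from~\eqref{lm:bg1} together with positive semidefiniteness, so $\sup_t\|K_n(t)\|<\infty$ a.s.), and a stochastic-Lyapunov / supermartingale argument à la Robbins–Siegmund applied to $V(t)=\|\mathbf{z}(t)-\vone_N\otimes\abtheta\|^2$; the monotonicity~\eqref{prop:analytic2} ensures the innovation term contributes a non-positive drift up to an $O(\alpha_t^2)$ remainder and a martingale-difference piece with summable conditional variance.

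Next I would show $\|\mathbf{z}_{\PC}(t)\|\to 0$ a.s., and in fact at a polynomial rate. The key point is the \emph{different decay rates} of $\beta_t=b(t+1)^{-\tau_2}$ and $\alpha_t=(t+1)^{-1}$ with $\tau_2<1/2$: the consensus potential acts on the $\PC$-component like a contraction by a factor $1-\beta_t\lambda_2(\overline L)+(\text{mean-zero fluctuation})$, while the innovation and the cross term inject disagreement of size $O(\alpha_t)$. Since $\alpha_t=o(\beta_t)$, a comparison-lemma / recursive-inequality argument (the type of pathwise rate lemma the authors allude to as contributing to distributed stochastic approximation) yields $\|\mathbf{z}_{\PC}(t)\| = o(\alpha_t/\beta_t) = o((t+1)^{\tau_2-1})\to 0$; the i.i.d.\ mean-connectivity Assumption~\ref{ass:conn} lets one replace $L_t$ by $\overline L$ modulo a martingale-difference term handled by a maximal inequality. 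With the disagreement controlled, the consensus component $\bz(t):=\mathbf{z}_{\C}(t)^a\in\mathbb{R}^M$ satisfies, after averaging~\eqref{th:genest1} over $n$,
\begin{equation}
\label{pf:genest:avg}
\bz(t+1)=\bz(t)-\frac{\alpha_t}{N}\sum_{n=1}^{N}\mathcal{K}\bigl(h_n(\bz(t))-g_n(\mathbf{y}_n(t))\bigr)+\alpha_t e(t),
\end{equation}
where $e(t)$ collects (a) the gain error $(K_n(t)-\mathcal{K})$ terms, bounded by $\delta(t+1)^{-\tau_3}$ times bounded quantities via~\eqref{lm:bg1}, (b) the $h_n(\mathbf{z}_n(t))-h_n(\bz(t))$ terms, bounded via Assumption~\ref{ass:lingrowth} by $O(\|\mathbf{z}_{\PC}(t)\|)$, and (c) a martingale-difference noise with summable conditional second moment; all three force $\sum_t \alpha_t e(t)$ to converge a.s.\ (the deterministic parts are $o(1)$, the martingale part converges by the $L^2$ bound).

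The final step is to recognize~\eqref{pf:genest:avg} as a stochastic approximation recursion $\bz(t+1)=\bz(t)-\alpha_t\,\mathcal{K}\nabla\!\Psi(\bz(t))/N + \alpha_t(\text{noise}+o(1))$ whose mean field $h(\cdot)=\sum_n h_n(\cdot)$ (see~\eqref{sensmod5}) has the unique zero-crossing structure $(\btheta-\abtheta)^\top(h(\btheta)-h(\abtheta))>0$ for $\btheta\neq\abtheta$ by Proposition~\ref{prop:analytic}~(3) — and $h(\abtheta)=\sum_n h_n(\abtheta)=\sum_n \East[g_n(\mathbf{y}_n(t))]$ by~\eqref{prop:analytic1}, so $\abtheta$ is the unique equilibrium. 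Since $\mathcal{K}$ is positive definite, $-\mathcal{K}(h(\btheta)-h(\abtheta))$ still points ``inward'' in the sense that $(\btheta-\abtheta)^\top \mathcal{K}^{-1}$-weighted Lyapunov function $W(\btheta)=(\btheta-\abtheta)^\top\mathcal{K}^{-1}(\btheta-\abtheta)$ decreases along the mean field; a standard a.s.\ convergence theorem for stochastic approximation (Robbins–Monro with state-dependent but bounded noise, e.g.\ via the Robbins–Siegmund lemma again) then gives $\bz(t)\to\abtheta$ a.s. Combining with $\|\mathbf{z}_{\PC}(t)\|\to0$ yields $\mathbf{z}_n(t)\to\abtheta$ for every $n$. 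The main obstacle I anticipate is making the disagreement-decay estimate fully rigorous and pathwise: one must simultaneously control the disagreement recursion (which needs a bound on $\|\mathbf{z}(t)\|$ that a priori depends on not-yet-established convergence) and the averaged recursion, so the argument likely proceeds by a bootstrap — first a crude a.s.\ boundedness, then $o(1)$ disagreement, then convergence of $\bz(t)$, each feeding the next — with the maximal-inequality handling of the $\beta_t$-scaled martingale fluctuations in the disagreement dynamics being the most delicate piece.
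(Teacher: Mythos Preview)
Your proposal is sound and would yield a correct proof, but it follows a genuinely different route from the paper's own argument for Theorem~\ref{th:genest}. Both proofs begin with an a.s.\ boundedness step via a supermartingale argument on a $\mathcal{K}^{-1}$-weighted Lyapunov function (Lemma~\ref{lm:bg} in the paper; your Robbins--Siegmund step). From there, however, the paper does \emph{not} decompose into consensus and disagreement components. Instead it works directly with the full state $\wz_{t}=\mathbf{z}_{t}-\abbtheta$ and introduces the combined drift function $\mathcal{H}_{t}(\mathbf{z})=(b_{\beta}\beta_{t}/\alpha_{t})(\mathbf{z}-\abbtheta)^{\top}(\OL\otimes\mathcal{K}^{-1})(\mathbf{z}-\abbtheta)+(\mathbf{z}-\abbtheta)^{\top}(\bh(\mathbf{z})-\bh(\abbtheta))$ (Proposition~\ref{prop:Lg}), shows that $\mathcal{H}_{t}\geq\bc_{\Vap}\|\mathbf{z}-\abbtheta\|^{2}$ uniformly on every annulus $\Gamma_{\Vap}=\{\Vap\leq\|\mathbf{z}-\abbtheta\|\leq 1/\Vap\}$ for $t$ large, and then runs a stopping-time argument: the stopped process $V_{t\wedge\rho_{\Vap}}$ is a supermartingale with a strictly negative drift term $-b_{8}(\Vap)\alpha_{t}$ on $\{\rho_{\Vap}>t\}$, which forces $\rho_{\Vap}<\infty$ a.s.; boundedness then forces exit through the inner sphere, giving $\liminf_{t}\|\wz_{t}\|=0$, and convergence of $V_{t}$ (Corollary~\ref{corr:bg}) upgrades this to $V_{t}\rightarrow 0$.

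Your route --- first establish disagreement decay, then reduce to a perturbed centralized Robbins--Monro recursion on the network average $\bz(t)$ --- is exactly the machinery the paper deploys for the \emph{rate} result, Theorem~\ref{th:genrate} (see Lemma~\ref{lm:consrate}), rather than for bare consistency. What you gain is modularity: once the disagreement rate is in hand, the reduction to classical single-agent stochastic approximation is clean, and the same scaffolding immediately supports rate estimates. What the paper's route gains is that consistency is obtained without first quantifying the disagreement decay rate; the mixed-time-scale effect $\beta_{t}/\alpha_{t}\rightarrow\infty$ is absorbed directly into the lower bound for $\mathcal{H}_{t}$ on $\Gamma_{\Vap}$, and the annulus/stopping-time mechanism handles the nonlinearity of $h$ without needing to linearize or bound $h_{n}(\mathbf{z}_{n}(t))-h_{n}(\bz(t))$ via local Lipschitz estimates on an a priori unknown bounded set. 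Your bootstrap concern (boundedness $\Rightarrow$ disagreement decay $\Rightarrow$ averaged convergence) is legitimate but resolvable, since the disagreement analysis only needs $\sup_{t}\|\mathbf{z}_{t}\|<\infty$, which you obtain in the first step.
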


The proof of Theorem~\ref{th:genest} is accomplished in steps, the key intermediate ingredients being Lemma~\ref{lm:bg} and Proposition~\ref{prop:Lg} concerning the boundedness of the processes $\{\mathbf{z}_{n}(t)\}$, $n=1,\cdots,N$, and a Lyapunov type-construction, respectively.

\begin{lemma}
\label{lm:bg} Let the hypotheses of Theorem~\ref{th:genest} hold.
Then, for each $n$, the process $\{\mathbf{z}_{n}(t)\}$ is bounded a.s., i.e.,
\begin{equation}
\label{lm:bg2}\Past\left(\sup_{t\geq 0}\left\|\mathbf{z}_{n}(t)\right\|<\infty\right)=1.
\end{equation}
\end{lemma}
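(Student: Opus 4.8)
The plan is to establish the a.s.\ boundedness of $\{\mathbf{z}_{n}(t)\}$ by analyzing the evolution of a suitable nonnegative scalar functional and invoking a stochastic-approximation-type boundedness argument. First I would stack the agent estimates into the vector $\mathbf{z}(t)=\vecc(\mathbf{z}_{n}(t))\in\mathbb{R}^{NM}$ and rewrite the recursion~\eqref{th:genest1} compactly as
\begin{equation}
\mathbf{z}(t+1)=\left(I_{NM}-\beta_{t}\left(L_{t}\otimes I_{M}\right)\right)\mathbf{z}(t)+\alpha_{t}\mathbf{K}(t)\left(G(\mathbf{y}_{t})-H(\mathbf{z}(t))\right),
\end{equation}
where $\mathbf{K}(t)=\ndiag(K_{n}(t))$, $H(\mathbf{z}(t))=\vecc(h_{n}(\mathbf{z}_{n}(t)))$ and $G(\mathbf{y}_{t})=\vecc(g_{n}(\mathbf{y}_{n}(t)))$. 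Splitting $G(\mathbf{y}_{t})=H(\mathbf{1}_{N}\otimes\abtheta)+\left(G(\mathbf{y}_{t})-H(\mathbf{1}_{N}\otimes\abtheta)\right)$ isolates a zero-mean $\{\mathcal{F}_{t+1}\}$-martingale-difference innovation term (using Proposition~\ref{prop:analytic}(2), $h_{n}(\abtheta)=\East[g_{n}(\mathbf{y}_{n}(t))]$) with bounded conditional second moments, since the exponential-family moment generating function is finite in a neighborhood of $\abtheta$.

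Next I would take $V(t)=\left\|\mathbf{z}(t)-\mathbf{1}_{N}\otimes\abtheta\right\|^{2}$ and bound $\East\left[V(t+1)\mid\mathcal{F}_{t}\right]$. The consensus operator $I_{NM}-\beta_{t}(L_{t}\otimes I_{M})$ is nonexpansive on the relevant range of $\beta_{t}$ (for $t$ large, since $\beta_{t}\to 0$), contributing no growth; the innovation cross-term is handled by the monotonicity property~\eqref{prop:analytic2}, which, combined with positive semidefiniteness of $K_{n}(t)$, gives $\left(\mathbf{z}_{n}(t)-\abtheta\right)^{\top}K_{n}(t)\left(h_{n}(\mathbf{z}_{n}(t))-h_{n}(\abtheta)\right)\geq 0$, so this term has the correct (dissipative) sign; the remaining contributions, via Assumption~\ref{ass:lingrowth} (linear growth of $h_{n}$), the uniform convergence $K_{n}(t)\to\mathcal{K}$ in~\eqref{lm:bg1}, and the martingale-difference bound, are of the form $\alpha_{t}^{2}\,O(1+V(t))$ plus a martingale-difference term. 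This yields a recursion of the type $\East\left[V(t+1)\mid\mathcal{F}_{t}\right]\leq\left(1+c\alpha_{t}^{2}\right)V(t)+c\alpha_{t}^{2}+M_{t}$ with $\sum_{t}\alpha_{t}^{2}<\infty$, and the Robbins--Siegmund almost supermartingale convergence theorem then delivers that $V(t)$ converges a.s.\ to a finite limit, in particular $\sup_{t}V(t)<\infty$ a.s., which is exactly~\eqref{lm:bg2}.

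The main obstacle I anticipate is handling the interaction between the consensus term and the innovation term cleanly enough that no uncontrolled cross-terms appear: in the mixed time-scale regime $\beta_{t}/\alpha_{t}=b(t+1)^{1-\tau_{2}}\to\infty$, so the naive bound on $\left\|I_{NM}-\beta_{t}(L_{t}\otimes I_{M})\right\|$ is not a contraction toward the true parameter (it only contracts the disagreement component $\mathbf{z}_{\PC}$), and the cross-term between the consensus displacement and the innovation must be shown not to destabilize $V(t)$. I would address this by keeping $\beta_{t}$-dependent terms grouped with the (bounded) spectral norm of $L_t$ and absorbing them into the $O(\alpha_t V(t))$ budget only after noting $\beta_t \|L_t\| \cdot \alpha_t = O(\alpha_t^2 (t+1)^{1-\tau_2})$ is not summable in general --- so instead one splits $V(t)$ into its $\C$ and $\PC$ components, uses the strict contraction $\lambda_2(\overline L)>0$ on $\mathbf{z}_{\PC}$ (in conditional mean, via Assumption~\ref{ass:conn}), and treats the $\C$-component with the innovation monotonicity; this decomposition is the technical heart of the argument and mirrors the standard consensus + innovations analysis, with the extra care that $K_n(t)$ is random but uniformly close to the fixed positive definite $\mathcal{K}$.
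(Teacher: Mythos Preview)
There is a genuine gap in your argument at the step where you claim that monotonicity~\eqref{prop:analytic2} together with positive semidefiniteness of $K_{n}(t)$ yields
\[
\left(\mathbf{z}_{n}(t)-\abtheta\right)^{\top}K_{n}(t)\left(h_{n}(\mathbf{z}_{n}(t))-h_{n}(\abtheta)\right)\geq 0.
\]
This inequality is \emph{false} in general: monotonicity of $h_{n}$ only guarantees $(\mathbf{z}-\abtheta)^{\top}(h_{n}(\mathbf{z})-h_{n}(\abtheta))\geq 0$, and inserting a positive semidefinite matrix $K_{n}(t)$ between the two factors can flip the sign. For a concrete counterexample in $\mathbb{R}^{2}$, take $h(\mathbf{z})=B\mathbf{z}$ with $B=\begin{pmatrix}1&1\\1&1\end{pmatrix}$ (the gradient of a convex quadratic, hence monotone), $K=\begin{pmatrix}1&0\\0&10\end{pmatrix}$, $\abtheta=\mathbf{0}$, and $\mathbf{z}=(2,-1)^{\top}$; then $(\mathbf{z}-\abtheta)^{\top}K(h(\mathbf{z})-h(\abtheta))=-8<0$. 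Consequently, with the unweighted Lyapunov functional $V(t)=\|\mathbf{z}(t)-\mathbf{1}_{N}\otimes\abtheta\|^{2}$ the innovation cross-term is \emph{not} dissipative, and your Robbins--Siegmund recursion does not close.

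The paper circumvents this obstruction by working instead with the $\mathcal{K}^{-1}$-weighted functional
\[
V_{t}=\wz_{t}^{\top}\left(I_{N}\otimes\mathcal{K}^{-1}\right)\wz_{t},
\]
where $\wz_{t}=\mathbf{z}(t)-\mathbf{1}_{N}\otimes\abtheta$. With this choice the relevant cross-term becomes $\wz_{t}^{\top}(I_{N}\otimes\mathcal{K}^{-1})\bK_{t}(\bh(\mathbf{z}_{t})-\bh(\abbtheta))$, and since $\bK_{t}\to I_{N}\otimes\mathcal{K}$ uniformly at rate $(t+1)^{-\tau_{3}}$ by hypothesis~\eqref{lm:bg1}, the product $(I_{N}\otimes\mathcal{K}^{-1})\bK_{t}$ is $I_{NM}$ up to an error of order $(t+1)^{-\tau_{3}}$. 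The leading part $\wz_{t}^{\top}(\bh(\mathbf{z}_{t})-\bh(\abbtheta))\geq 0$ then follows from genuine monotonicity, while the correction is absorbed as $c\,\alpha_{t}(t+1)^{-\tau_{3}}(1+\|\wz_{t}\|^{2})$, which, together with the $O(\alpha_{t}\beta_{t})$ and $O(\alpha_{t}^{2})$ contributions (all summable), yields $\East[V_{t+1}\mid\mathcal{F}_{t}]\leq(1+\gamma_{t})V_{t}+\gamma_{t}'$ with summable $\{\gamma_{t}\},\{\gamma_{t}'\}$, and the supermartingale argument goes through. Your proposed $\C/\PC$ decomposition is not needed for boundedness; the essential missing ingredient is the $\mathcal{K}^{-1}$ weighting that neutralizes the time-varying gains.
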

\begin{proof} Let $\wz_{n}(t)=\mathbf{z}_{n}(t)-\abtheta$ and denote by $\mathbf{z}_{t}$, $\wz_{t}$ and $\abbtheta$ the $\mathbb{R}^{NM}$-valued $\vecc\left(\mathbf{z}_{n}(t)\right)$, $\vecc\left(\wz_{n}(t)\right)$, and $\mathbf{1}_{N}\otimes\abtheta$, respectively. Noting that $\left(L_{t}\otimes I_{M}\right)\left(\mathbf{1}_{N}\otimes\abtheta\right)=\mathbf{0}$, the process $\{\wz_{t}\}$ is seen to satisfy
\begin{equation}
\label{lm:bg3}
\wz_{t+1}=\wz_{t}-\beta_{t}\left(L_{t}\otimes I_{M}\right)\wz_{t}-\alpha_{t}\bK_{t}\left(\bh(\mathbf{z}_{t})-\bh(\abbtheta)\right)+\alpha_{t}\bK_{t}\left(\bg(\mathbf{y}_{t})-\bh(\abbtheta)\right),
\end{equation}
where
\begin{equation}
\label{lm:bg4}
\bh(\mathbf{z}_{t})=\vecc\left(h_{n}(\mathbf{z}_{n}(t))\right),~~\bh(\abbtheta)=\vecc\left(h_{n}(\abtheta)\right),~~\bg(\mathbf{y}_{t})=\vecc\left(g_{n}(\mathbf{y}_{n}(t))\right),
\end{equation}
and $\bK_{t}=\ndiag(K_{n}(t))$. Note that, by hypothesis, $\bK_{t}\in\mathbb{S}_{++}^{NM}$ and define the $\mathbb{R}_{+}$-valued $\{\mathcal{F}_{t}\}$-adapted process $\{V_{t}\}$ by
\begin{equation}
\label{lm:bg5}V_{t}=\wz_{t}^{\top}\left(I_{N}\otimes\mathcal{K}^{-1}\right)\wz_{t}
\end{equation}
for all $t$. Note that by~\eqref{lm:bg3} we obtain
\begin{align}
\label{lm:bg84}
\left(I_{N}\otimes\mathcal{K}\right)^{-1}\wz_{t+1}=\left(I_{N}\otimes\mathcal{K}^{-1}\right)\wz_{t}-\beta_{t}\left(L_{t}\otimes\mathcal{K}^{-1}\right)\wz_{t}\\-\alpha_{t}\left(I_{N}\otimes\mathcal{K}\right)^{-1}\bK_{t}\left(\bh(\mathbf{z}_{t})-\bh(\abbtheta)\right)
+ \alpha_{t}\left(I_{N}\otimes\mathcal{K}\right)^{-1}\bK_{t}\left(\bg(\mathbf{y}_{t})-\bh(\abbtheta)\right).
\end{align}
By~\eqref{prop:analytic1} we have for all $t\geq 0$
\begin{equation}
\label{lm:bg6}\East\left[\bg(\mathbf{y}_{t})-\bh(\abbtheta)\right]=\mathbf{0},
\end{equation}
and using the temporal independence of the Laplacian sequence we obtain
\begin{align}
\nonumber
\East\left[V_{t+1}~|~\mathcal{F}_{t}\right] &=V_{t}-2\beta_{t}\wz_{t}^{\top}\left(\OL\otimes\mathcal{K}^{-1}\right)\wz_{t}-2\alpha_{t}\wz_{t}^{\top}\left(I_{N}\otimes\mathcal{K}^{-1}\right) K_{t}\left(\bh(\mathbf{z}_{t})-\bh(\abbtheta)\right)
\\
\nonumber
 & +\beta_{t}^{2}\wz_{t}^{\top}\East\left[\left(\OL\otimes I_{M}\right)\left(I_{N}\otimes\mathcal{K}^{-1}\right)\left(\OL\otimes I_{M}\right)\right]\wz_{t}
 \\
 \nonumber
 & +2\alpha_{t}\beta_{t}\wz_{t}^{\top}\left(\OL\otimes I_{M}\right)\left(I_{N}\otimes\mathcal{K}^{-1}\right)K_{t}\left(\bh(\mathbf{z}_{t}-\bh(\abbtheta)\right)\\
& +\alpha_{t}^{2}\left(\bh(\mathbf{z}_{t}-\bh(\abtheta)\right)^{\top}K_{t}\left(I_{N}\otimes\mathcal{K}^{-1}\right)K_{t} \left(\bh(\mathbf{z}_{t}-\bh(\abbtheta)\right)
\\
\label{lm:bg7}
& +\alpha_{t}^{2}\East\left[\left(\bg(\mathbf{y}_{t})-\bh(\abbtheta)\right)^{\top}K_{t}\left(I_{N}\otimes\mathcal{K}^{-1}\right) K_{t}\left(\bg(\mathbf{y}_{t})-\bh(\abbtheta)\right)\right]
\end{align}
for all $t\geq 0$.

Recall the definition of consensus subspace in Definition~\ref{def:consspace} and note that by using the properties of the Laplacian $\OL$ and matrix Kronecker products we have
\begin{equation}
\label{lm:bg201}
\wz_{t}^{\top}\left(\OL\otimes\mathcal{K}^{-1}\right)\wz_{t}=\left(\wz_{t}\right)_{\PC}^{\top}\left(\OL\otimes\mathcal{K}^{-1}\right)\left(\wz_{t}\right)_{\PC}\geq\lambda_{2}(\OL)\lambda_{1}\left(\mathcal{K}^{-1}\right)\left\|\left(\wz_{t}\right)_{\PC}\right\|^{2}
\end{equation}
for all $t\geq 0$, where $\lambda_{1}\left(\mathcal{K}^{-1}\right)>0$ denotes the smallest eigenvalue of the positive definite matrix $\mathcal{K}^{-1}$.

Now consider the inequality
\begin{align}
\label{lm:bg8}\wz_{t}^{\top}\left(\bh(\mathbf{z}_{t})-\bh(\abbtheta)\right) = \sum_{n=1}^{N}\left(\mathbf{z}_{n}(t)-\abtheta\right)^{\top}\left(h_{n}(\mathbf{z}_{n}(t))-h_{n}(\abtheta)\right)\geq 0
\end{align}
(where the non-negativity of the terms in the summation follows from Proposition~\ref{prop:analytic}), and note that, by Assumption~\ref{ass:lingrowth} and hypothesis~\eqref{lm:bg1}, there exist positive constants $c_{1}$ and $t_{1}$ large enough such that
\begin{align}
\label{lm:bg9}\wz_{t}^{\top}\left(I_{N}\otimes\mathcal{K}^{-1}\right)K_{t}\left(\bh(\mathbf{z}_{t})-\bh(\abbtheta)\right)\\ \geq \wz_{t}^{\top}\left(\bh(\mathbf{z}_{t})-\bh(\abbtheta)\right)-\left|\wz_{t}^{\top}\left(I_{N}\otimes\mathcal{K}^{-1}\right)\left(K_{t}-\left(I_{N}\otimes\mathcal{K}^{-1}\right)\right)\left(\bh(\mathbf{z}_{t})-\bh(\abbtheta)\right)\right|\\
\geq -\left\|\wz_{t}\right\|\left\|I_{N}\otimes\mathcal{K}^{-1}\right\|\left\|K_{t}-\left(I_{N}\otimes\mathcal{K}^{-1}\right)\right\|\left\|\bh(\mathbf{z}_{t})-\bh(\abbtheta)\right\|\\
\geq -c_{1}\left(1/(t+1)^{\tau_{3}}\right)\left(1+\left\|\wz_{t}\right\|^{2}\right)
\end{align}
for all $t\geq t_{1}$, where we also use the inequality $\|\wz_{t}\|\leq \|\wz_{t}\|^{2}+1$. Similarly, by invoking the boundedness of the matrices involved and the linear growth condition on the $h_{n}(\cdot)$-s and making $c_{1}$ and $t_{1}$ larger if necessary, we obtain the following sequence of inequalities for all $t\geq t_{1}$:
\begin{align}
\label{lm:bg10}
\wz_{t}^{\top}\East\left[\left(\OL\otimes I_{M}\right)\left(I_{N}\otimes\mathcal{K}^{-1}\right)\left(\OL\otimes I_{M}\right)\right]\wz_{t}\\ =\left(\wz_{t}\right)_{\PC}^{\top}\East\left[\left(\OL\otimes I_{M}\right)\left(I_{N}\otimes\mathcal{K}^{-1}\right)\left(\OL\otimes I_{M}\right)\right]\left(\wz_{t}\right)_{\PC}\leq c_{1}\left\|\left(\wz_{t}\right)_{\PC}\right\|^{2},
\end{align}
\begin{equation}
\label{lm:bg11}\wz_{t}^{\top}\left(\OL\otimes I_{M}\right)\left(I_{N}\otimes\mathcal{K}^{-1}\right)K_{t}\left(\bh(\mathbf{z}_{t})-\bh(\abbtheta)\right)\leq c_{1}\left(1+\left\|\wz_{t}\right\|^{2}\right),
\end{equation}
\begin{equation}
\label{lm:bg12}\left(\bh(\mathbf{z}_{t}-\bh(\abtheta)\right)^{\top}K_{t}\left(I_{N}\otimes\mathcal{K}^{-1}\right)K_{t}\left(\bh(\mathbf{z}_{t}-\bh(\abbtheta)\right)\leq c_{1}\left(1+\left\|\wz_{t}\right\|^{2}\right),
\end{equation}
and
\begin{equation}
\label{lm:bg13}\East\left[\left(\bg(\mathbf{y}_{t})-\bh(\abbtheta)\right)^{\top}K_{t}\left(I_{N}\otimes\mathcal{K}^{-1}\right)K_{t}\left(\bg(\mathbf{y}_{t})-\bh(\abbtheta)\right)\right]\leq c_{1},
\end{equation}
where the last inequality uses the fact that $\bg(\mathbf{y}_{t})$ possesses moments of all orders due to the exponential statistics.

Noting that there exist positive constants $c_{2}$ and $c_{3}$ such that
\begin{equation}
\label{lm:bg14}
c_{2}\left\|\wz_{t}\right\|^{2}\leq \wz_{t}^{\top}\left(I_{N}\otimes\mathcal{K}^{-1}\right)\wz_{t}=V_{t}\leq c_{3}\left\|\wz_{t}\right\|^{2}
\end{equation}
for all $t$, by~\eqref{lm:bg7}-\eqref{lm:bg13} we have for all $t\geq t_{1}$
\begin{align}
\label{lm:bg15}
\East\left[V_{t+1}~|~\mathcal{F}_{t}\right]\leq\left(1+c_{4}\alpha_{t}\left(\frac{1}{(t+1)^{\tau_{3}}}+\beta_{t}+ \alpha_{t}\right)\right)V_{t}
\\
-c_{5}\left(\beta_{t}-\beta_{t}^{2}\right)\left\|\left(\wz_{t}\right)_{\PC}\right\|^{2}+c_{6}\left(\frac{\alpha_{t}}{(t+1)^{\tau_{3}}}+\alpha_{t}\beta_{t}+\alpha_{t}^{2}\right)
\end{align}
for some positive constants $c_{4}$, $c_{5}$ and $c_{6}$. Since $\beta_{t}\rightarrow 0$ as $t\rightarrow\infty$ by~\eqref{weight}, we may choose $t_{2}$ large enough (larger than $t_{1}$) such that $\left(\beta_{t}-\beta_{t}^{2}\right)\geq 0$ for all $t\geq t_{2}$. Further, the hypotheses on the weight sequences~\eqref{weight} confirm the existence of constants $\tau_{4}$ and $\tau_{5}$ strictly greater than 1, and positive constants $c_{7}$ and $c_{8}$, such that
\begin{equation}
\label{lm:bg16}
c_{4}\alpha_{t}\left(\frac{1}{(t+1)^{\tau_{3}}}+\beta_{t}+\alpha_{t}\right)\leq \frac{c_{7}}{(t+1)^{\tau_{4}}}=\gamma_{t}
\end{equation}
and
\begin{equation}
\label{lm:bg17}
c_{6}\left(\frac{\alpha_{t}}{(t+1)^{\tau_{3}}}+\alpha_{t}\beta_{t}+\alpha_{t}^{2}\right)\leq \frac{c_{8}}{(t+1)^{\tau_{5}}}=\gamma^{\prime}_{t}
\end{equation}
for all $t\geq t_{2}$ (by making $t_{2}$ larger if necessary). By the above construction we then obtain
\begin{equation}
\label{lm:bg18}\East\left[V_{t+1}~|~\mathcal{F}_{t}\right]\leq\left(1+\gamma_{t}\right)V_{t}+\gamma^{\prime}_{t}
\end{equation}
for all $t\geq t_{2}$ with the positive weight sequences $\{\gamma_{t}\}$ and $\{\gamma^{\prime}_{t}\}$ being summable, i.e.,
\begin{equation}
\label{lm:bg19}\sum_{t\geq 0}\gamma_{t}<\infty~~\mbox{and}~~\sum_{t\geq 0}\gamma^{\prime}_{t}<\infty.
\end{equation}
Note that, by~\eqref{lm:bg19}, the product $\prod_{s=t}^{\infty}(1+\gamma_{s})$ exists for all $t$, and define by $\{W_{t}\}$ the $\mathbb{R}_{+}$-valued $\{\mathcal{F}_{t}\}$-adapted process such that
\begin{equation}
\label{lm:bg20}
W_{t}=\left(\prod_{s=t}^{\infty}(1+\gamma_{s})\right)V_{t}+\sum_{s=t}^{\infty}\gamma^{\prime}_{s},~~~\forall t.
\end{equation}
By~\eqref{lm:bg18}, the process $\{W_{t}\}$ may be shown to satisfy
\begin{equation}
\label{lm:bg21}\East\left[W_{t+1}~|~\mathcal{F}_{t}\right]\leq W_{t}
\end{equation}
for all $t\geq t_{2}$. Being a non-negative supermartingale the process $\{W_{t}\}$ converges a.s. to a bounded random variable $W^{\ast}$ as $t\rightarrow\infty$. It then follows readily by~\eqref{lm:bg20} that $V_{t}\rightarrow W^{\ast}$ a.s. as $t\rightarrow\infty$. In particular, we conclude that the process $\{V_{t}\}$ is bounded a.s., which establishes the desired boundedness of the sequences $\{\mathbf{z}_{n}(t)\}$ for all $n$.
\end{proof}

The following useful convergence may be extracted as a corollary to Lemma~\ref{lm:bg}.
\begin{corollary}
\label{corr:bg} Under the hypotheses of Lemma~\ref{lm:bg}, there exists a finite random variable $V^{\ast}$ such that $V_{t}\rightarrow V^{\ast}$ a.s. as $t\rightarrow\infty$, where $V_{t}=\wz_{t}^{\top}\left(I_{N}\otimes\mathcal{K}^{-1}\right)\wz_{t}$ as in~\eqref{lm:bg5}.
\end{corollary}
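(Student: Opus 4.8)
The plan is to simply extract the claimed convergence from the supermartingale machinery already set up in the proof of Lemma~\ref{lm:bg}. Recall that there we introduced the $\mathbb{R}_{+}$-valued $\{\mathcal{F}_{t}\}$-adapted process $\{W_{t}\}$ in~\eqref{lm:bg20} and established, via~\eqref{lm:bg18}--\eqref{lm:bg21}, that $\{W_{t}\}$ is a non-negative supermartingale for $t\geq t_{2}$; by the martingale convergence theorem it therefore converges a.s.\ to an a.s.-finite non-negative random variable, which we called $W^{\ast}$.

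Next I would use the summability~\eqref{lm:bg19} of the sequences $\{\gamma_{t}\}$ and $\{\gamma'_{t}\}$ to control the tail quantities appearing in~\eqref{lm:bg20}: the infinite product $\prod_{s=t}^{\infty}(1+\gamma_{s})$ is finite for every $t$ and decreases to $1$ as $t\rightarrow\infty$, while $\sum_{s=t}^{\infty}\gamma'_{s}\downarrow 0$ as $t\rightarrow\infty$. Solving~\eqref{lm:bg20} for $V_{t}$ yields
\[
V_{t}=\frac{W_{t}-\sum_{s=t}^{\infty}\gamma'_{s}}{\prod_{s=t}^{\infty}(1+\gamma_{s})},
\]
and letting $t\rightarrow\infty$ along the probability-one event on which $W_{t}\rightarrow W^{\ast}$ gives $V_{t}\rightarrow W^{\ast}$ a.s. Setting $V^{\ast}=W^{\ast}$, which is a.s.\ finite, then establishes the corollary.

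There is no real obstacle here: the substantive work — deriving the recursive inequality~\eqref{lm:bg18} and verifying the summability conditions — was already carried out inside the proof of Lemma~\ref{lm:bg}, and this corollary merely isolates the intermediate conclusion $V_{t}\rightarrow V^{\ast}$ so that it can be invoked later (for instance in analyzing how the auxiliary estimates feed into the adaptive-gain recursion~\eqref{gain1}--\eqref{gain2}). The only point meriting a line of justification is the convergence of the tail product and the tail sum, which is immediate from~\eqref{lm:bg19}.
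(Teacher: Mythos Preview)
Your proposal is correct and is exactly the argument the paper uses: in the proof of Lemma~\ref{lm:bg} it is already noted that the non-negative supermartingale $\{W_{t}\}$ converges a.s.\ to a finite $W^{\ast}$, and ``it then follows readily by~\eqref{lm:bg20} that $V_{t}\rightarrow W^{\ast}$ a.s.''; the corollary simply records this fact, and your justification via the tail product $\prod_{s\geq t}(1+\gamma_{s})\to 1$ and tail sum $\sum_{s\geq t}\gamma'_{s}\to 0$ is precisely the ``readily'' step. (A small aside: the corollary is invoked not in the gain analysis but at the end of the proof of Theorem~\ref{th:genest}, to upgrade $\liminf V_{t}=0$ to $\lim V_{t}=0$.)
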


The following Lyapunov-type construction, whose proof is relegated to Appendix~\ref{sec:app1}, will be critical to the subsequent development.
\begin{proposition}
\label{prop:Lg} Let $\Vap\in (0,1)$ and $\Gamma_{\Vap}$ denote the set
\begin{equation}
\label{prop:Lg1} \Gamma_{\Vap}=\left\{\mathbf{z}\in\mathbb{R}^{NM}~:~\Vap\leq\left\|\mathbf{z}-\abbtheta\right\|\leq 1/\Vap\right\}.
\end{equation}
For each $t\geq 0$, denote by $\mathcal{H}_{t}:\mathbb{R}^{NM}\mapsto\mathbb{R}$ the function given by
\begin{equation}
\label{prop:Lg3}
\mathcal{H}_{t}(\mathbf{z})=\frac{b_{\beta}\beta_{t}}{\alpha_{t}}\left(\mathbf{z}-\abbtheta\right)^{\top}\left(\OL\otimes\mathcal{K}^{-1}\right)\left(\mathbf{z}-\abbtheta\right)+\left(\mathbf{z}-\abbtheta\right)^{\top}\left(\bh(\mathbf{z})-\bh(\abbtheta)\right)
\end{equation}
for all $\mathbf{z}\in\mathbb{R}^{NM}$, where the matrix $\mathcal{K}^{-1}\in\mathbb{S}_{++}^{M}$ and $b_{\beta}>0$ is a constant. Then, there exist $t_{\Vap}>0$ and a constant $\bc_{\Vap}>0$ such that for all $t\geq t_{\Vap}$
\begin{equation}
\label{prop:Lg4}\mathcal{H}_{t}(\mathbf{z})\geq\bc_{\Vap}\left\|\mathbf{z}-\abbtheta\right\|^{2},~~~\forall \mathbf{z}\in\Gamma_{\Vap}.
\end{equation}
\end{proposition}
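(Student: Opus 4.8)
The plan is to decompose $\mathcal{H}_t(\mathbf{z})$ according to the splitting $\mathbf{z}-\abbtheta = (\mathbf{z}-\abbtheta)_{\C} + (\mathbf{z}-\abbtheta)_{\PC}$ and to argue that the two summands in \eqref{prop:Lg3} control, respectively, the $\PC$-component and the $\C$-component of $\|\mathbf{z}-\abbtheta\|^2$. First I would observe that, since $(\OL\otimes I_M)$ annihilates the consensus subspace $\C$, the quadratic form $(\mathbf{z}-\abbtheta)^{\top}(\OL\otimes\mathcal{K}^{-1})(\mathbf{z}-\abbtheta)$ equals $(\mathbf{z}-\abbtheta)_{\PC}^{\top}(\OL\otimes\mathcal{K}^{-1})(\mathbf{z}-\abbtheta)_{\PC}$ and is bounded below by $\lambda_2(\OL)\lambda_1(\mathcal{K}^{-1})\|(\mathbf{z}-\abbtheta)_{\PC}\|^2$, exactly as in \eqref{lm:bg201}. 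Since $\beta_t/\alpha_t = b(t+1)^{1-\tau_2}\to\infty$ by Assumption~\ref{ass:weight}, the coefficient $b_\beta\beta_t/\alpha_t$ is eventually as large as we like; in particular, there is $t_{\Vap}'$ so that for $t\geq t_{\Vap}'$ the first term alone dominates, say, $2\|(\mathbf{z}-\abbtheta)_{\PC}\|^2$ for every $\mathbf{z}$ (uniformly in $\mathbf{z}$, because the bound is homogeneous of degree two).

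Next I would handle the innovation term $(\mathbf{z}-\abbtheta)^{\top}(\bh(\mathbf{z})-\bh(\abbtheta)) = \sum_{n=1}^N (\mathbf{z}_n-\abtheta)^{\top}(h_n(\mathbf{z}_n)-h_n(\abtheta))$, which is nonnegative termwise by Proposition~\ref{prop:analytic}(2). The point is that this term is strictly positive on the compact annulus $\Gamma_{\Vap}$ away from the consensus subspace cannot be the whole story — we need a lower bound in terms of $\|(\mathbf{z}-\abbtheta)_{\C}\|^2$. I would argue as follows: on the consensus subspace, $\mathbf{z}-\abbtheta = \mathbf{1}_N\otimes\mathbf{a}$ for some $\mathbf{a}\in\mathbb{R}^M$, and then $\sum_n(\mathbf{z}_n-\abtheta)^{\top}(h_n(\mathbf{z}_n)-h_n(\abtheta)) = \mathbf{a}^{\top}(h(\abtheta+\mathbf{a})-h(\abtheta))$, which by the strict monotonicity \eqref{prop:analytic3} (consequence of global observability, Assumption~\ref{ass:globobs}) is strictly positive for $\mathbf{a}\neq\mathbf{0}$. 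A compactness argument on the set $\{\mathbf{a}: \Vap/\sqrt{2}\leq\|\mathbf{a}\|\leq\sqrt{N}/\Vap\}$ then yields a constant $\kappa_{\Vap}>0$ with $\mathbf{a}^{\top}(h(\abtheta+\mathbf{a})-h(\abtheta))\geq\kappa_{\Vap}\|\mathbf{a}\|^2$ on that set; continuity of $h$ (infinite differentiability, Proposition~\ref{prop:analytic}(1)) plus the linear growth bound (Assumption~\ref{ass:lingrowth}) lets me extend this, at the cost of shrinking $\kappa_{\Vap}$, to a neighborhood so that for general $\mathbf{z}\in\Gamma_{\Vap}$ one gets $(\mathbf{z}-\abbtheta)^{\top}(\bh(\mathbf{z})-\bh(\abbtheta))\geq\kappa_{\Vap}\|(\mathbf{z}-\abbtheta)_{\C}\|^2 - C_{\Vap}\|(\mathbf{z}-\abbtheta)_{\PC}\|\cdot\|(\mathbf{z}-\abbtheta)_{\C}\| - C_{\Vap}'\|(\mathbf{z}-\abbtheta)_{\PC}\|^2$ for suitable constants; the cross term is absorbed by Young's inequality.

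Finally I would combine the two estimates: choosing $t_{\Vap}\geq t_{\Vap}'$ large enough that the (large) coefficient of the first term in $\mathcal{H}_t$ beats all the negative $\|(\mathbf{z}-\abbtheta)_{\PC}\|^2$ contributions coming from the cross-term bookkeeping and the Young's-inequality slack, I obtain $\mathcal{H}_t(\mathbf{z})\geq c(\|(\mathbf{z}-\abbtheta)_{\PC}\|^2 + \|(\mathbf{z}-\abbtheta)_{\C}\|^2) = c\|\mathbf{z}-\abbtheta\|^2$ on $\Gamma_{\Vap}$ for $t\geq t_{\Vap}$, which is \eqref{prop:Lg4} with $\bc_{\Vap}=c$. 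I expect the main obstacle to be the second paragraph: turning the \emph{pointwise} strict positivity from global observability into a \emph{uniform quadratic} lower bound valid not only on the consensus subspace but on a whole annular region around it, and carefully tracking how errors transverse to $\C$ (which are \emph{not} controlled by the innovation term, since $h_n$ only sees the $n$-th block and monotonicity gives no coercivity there) get dominated by the consensus term — this is precisely where the divergence of $\beta_t/\alpha_t$ is essential and must be invoked quantitatively. The restriction to the bounded annulus $\Gamma_{\Vap}$ (rather than all of $\mathbb{R}^{NM}$) is what makes the compactness arguments and the use of the linear growth bound go through.
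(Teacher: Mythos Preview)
Your strategy is sound and shares the same opening move as the paper's proof: both use the $\C/\PC$ decomposition and the bound \eqref{lm:bg201} on the Laplacian term, and both rely on the strict monotonicity \eqref{prop:analytic3} plus compactness to extract positivity from the innovation term. The difference lies in how the pieces are assembled.

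The paper does \emph{not} attempt a direct quadratic lower bound. Instead, after bounding the cross terms $\left(\mathbf{z}_{\PC}\right)^{\top}\left(\bh(\mathbf{z})-\bh(\abbtheta)\right)$ and $\left(\mathbf{z}_{\C}-\abbtheta\right)^{\top}\left(\bh(\mathbf{z})-\bh(\mathbf{z}_{\C})\right)$ linearly in $\|\mathbf{z}_{\PC}\|$ (using $\|\mathbf{z}-\abbtheta\|\leq 1/\Vap$ on $\Gamma_{\Vap}$), it performs an explicit case split on whether $\|\mathbf{z}_{\PC}\|$ exceeds a threshold of order $\alpha_t/\beta_t$. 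In the large-$\|\mathbf{z}_{\PC}\|$ case the Laplacian term alone furnishes a positive \emph{constant}; in the small-$\|\mathbf{z}_{\PC}\|$ case one has $\|\mathbf{z}_{\C}-\abbtheta\|$ bounded below, and compactness of the resulting set plus strict positivity of $(\mathbf{z}_{\C}-\abbtheta)^{\top}(\bh(\mathbf{z}_{\C})-\bh(\abbtheta))$ gives another positive constant. The quadratic bound \eqref{prop:Lg4} is then obtained trivially from the constant lower bound $\mathcal{H}_t(\mathbf{z})>\delta'_{\Vap}$ by the observation that $\Vap^2\|\mathbf{z}-\abbtheta\|^2\leq 1$ on $\Gamma_{\Vap}$.

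Your route---obtaining $\kappa_{\Vap}\|(\mathbf{z}-\abbtheta)_{\C}\|^2$ directly from the innovation term and using Young's inequality on the cross terms---is a legitimate alternative, but be aware of a small gap: your compactness argument is stated on an \emph{annulus} in $\mathbf{a}$, so you have not established $\mathbf{a}^{\top}(h(\abtheta+\mathbf{a})-h(\abtheta))\geq\kappa_{\Vap}\|\mathbf{a}\|^2$ for small $\|\mathbf{a}\|$, and ``continuity plus linear growth'' does not close this. You need either a Taylor expansion invoking the positive definiteness of $\nabla h(\abtheta)=I(\abtheta)$ near $\mathbf{a}=0$, or a separate treatment of the case $\|(\mathbf{z}-\abbtheta)_{\C}\|$ small (where $\|(\mathbf{z}-\abbtheta)_{\PC}\|$ is then bounded below on $\Gamma_{\Vap}$ and the Laplacian term, with nonnegativity of the innovation term from Proposition~\ref{prop:analytic}(2), does all the work). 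The paper's constant-then-bootstrap approach sidesteps this issue entirely, since it never needs a quadratic bound on the consensus term---only a positive infimum over a compact set bounded away from $\abbtheta$.
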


We now complete the proof of Theorem~\ref{th:genest}.
\begin{proof}[Proof of Theorem~\ref{th:genest}] In what follows we use the notation and definitions formulated in the proof of Lemma~\ref{lm:bg}. Let us consider $\Vap\in (0,1)$ and let $\rho_{\Vap}$ denote the $\{\mathcal{F}_{t}\}$ stopping time
\begin{equation}
\label{th:genest10}
\rho_{\Vap}=\inf\left\{t\geq 0~:~\mathbf{z}_{t}\notin\Gamma_{\Vap}\right\},
\end{equation}
where $\Gamma_{\Vap}$ is defined in~\eqref{prop:Lg1}. Let $\{V_{t}\}$ be the $\{\mathcal{F}_{t}\}$-adapted process defined in~\eqref{lm:bg5} and denote by $\{V^{\Vap}_{t}\}$ the stopped process
\begin{equation}
\label{th:genest11}V^{\Vap}_{t}=V_{t\wedge\rho_{\Vap}},~~~\forall t,
\end{equation}
which is readily seen to be $\{\mathcal{F}_{t}\}$ adapted. Noting that
\begin{equation}
\label{th:genest12}V^{\Vap}_{t+1}=V_{t+1}\mathbb{I}\left(\rho_{\Vap}>t\right)+V_{\rho_{\Vap}}\mathbb{I}\left(\rho_{\Vap}\leq t\right)
\end{equation}
and the fact that the indicator function $\mathbb{I}\left(\rho_{\Vap}>t\right)$ and the random variable $V_{\rho_{\Vap}}\mathbb{I}\left(\rho_{\Vap}\leq t\right)$ are adapted to $\mathcal{F}_{t}$ for all $t$ ($\rho_{\Vap}$ being an $\{\mathcal{F}_{t}\}$ stopping time), we have
\begin{equation}
\label{th:genest13}\East\left[V^{\Vap}_{t+1}~|~\mathcal{F}_{t}\right]=\East\left[V_{t+1}~|~\mathcal{F}_{t}\right]\mathbb{I}\left(\rho_{\Vap}>t\right)+V_{\rho_{\Vap}}\mathbb{I}\left(\rho_{\Vap}\leq t\right)
\end{equation}
for all $t$.

Recall the function $\mathcal{H}_{t}(\cdot)$ defined in~\eqref{prop:Lg3}; setting $b_{\beta}=1/2$ in the definition of $\mathcal{H}_{t}(\cdot)$ we obtain
\begin{align}
\label{th:genest200}2\beta_{t}\wz_{t}^{\top}\left(\OL\otimes\mathcal{K}^{-1}\right)\wz_{t}+2\alpha_{t}\wz_{t}^{\top}\left(I_{N}\otimes\mathcal{K}^{-1}\right)K_{t}\left(\bh(\mathbf{z}_{t})-\bh(\abbtheta)\right)\\
=2\alpha_{t}\mathcal{H}_{t}(\mathbf{z}_{t})+\beta_{t}\wz_{t}^{\top}\left(\OL\otimes\mathcal{K}^{-1}\right)\wz_{t}\\+2\alpha_{t}\wz_{t}^{\top}\left(I_{N}\otimes\mathcal{K}^{-1}\right)\left(K_{t}-\left(I_{N}\otimes\mathcal{K}^{-1}\right)\right)\left(\bh(\mathbf{z}_{t})-\bh(\abbtheta)\right).
\end{align}
A slight rearrangement of the terms in the expansion~\eqref{lm:bg7} then yields
\begin{align}
\label{th:genest14}
\East\left[V_{t+1}~|~\mathcal{F}_{t}\right]=V_{t}-2\alpha_{t}\mathcal{H}_{t}(\mathbf{z}_{t})-\beta_{t}\wz_{t}^{\top}\left(\OL\otimes\mathcal{K}^{-1}\right)\wz_{t}\\-2\alpha_{t}\wz_{t}^{\top}\left(I_{N}\otimes\mathcal{K}^{-1}\right)\left(K_{t}-\left(I_{N}\otimes\mathcal{K}^{-1}\right)\right)\left(\bh(\mathbf{z}_{t})-\bh(\abbtheta)\right)\\
+\beta_{t}^{2}\wz_{t}^{\top}\East\left[\left(\OL\otimes I_{M}\right)\left(I_{N}\otimes\mathcal{K}^{-1}\right)\left(\OL\otimes I_{M}\right)\right]\wz_{t}\\+2\alpha_{t}\beta_{t}\wz_{t}^{\top}\left(\OL\otimes I_{M}\right)\left(I_{N}\otimes\mathcal{K}^{-1}\right)K_{t}\left(\bh(\mathbf{z}_{t}-\bh(\abbtheta)\right)\\
+\alpha_{t}^{2}\left(\bh(\mathbf{z}_{t}-\bh(\abtheta)\right)^{\top}K_{t}\left(I_{N}\otimes\mathcal{K}^{-1}\right)K_{t}\left(\bh(\mathbf{z}_{t}-\bh(\abbtheta)\right)\\+\alpha_{t}^{2}\East\left[\left(\bg(\mathbf{y}_{t})-\bh(\abbtheta)\right)^{\top}K_{t}\left(I_{N}\otimes\mathcal{K}^{-1}\right)K_{t}\left(\bg(\mathbf{y}_{t})-\bh(\abbtheta)\right)\right]
\end{align}
for all $t\geq 0$, where $\mathcal{H}_{t}(\cdot)$ is defined in~\eqref{prop:Lg3}. The inequalities in~\eqref{lm:bg201}-\eqref{lm:bg14} then show that there exist positive constants $b_{1}$, $b_{2}$ and $b_{3}$, and a deterministic time $t_{1}$ (large enough), such that,
\begin{align}
\label{th:genest15}\East\left[V_{t+1}~|~\mathcal{F}_{t}\right]\leq\left(1+b_{1}\left(\alpha_{t}(t+1)^{-\tau_{3}}+\alpha^{2}_{t}+\alpha_{t}\beta_{t}\right)\right)V_{t}-2\alpha_{t}\mathcal{H}_{t}(\mathbf{z}_{t})\\
-b_{2}\left(\beta_{t}-\beta_{t}^{2}\right)\left\|(\wz_{t})_{\PC}\right\|^{2}+b_{3}\left(\alpha_{t}(t+1)^{-\tau_{3}}+\alpha^{2}_{t}+\alpha_{t}\beta_{t}\right)
\end{align}
for all $t\geq t_{1}$. Note that, by definition, on the event $\{\rho_{\Vap}>t\}$ we have $\mathbf{z}_{t}\in\Gamma_{\Vap}$, and hence, an immediate application of Proposition~\ref{prop:Lg} establishes the existence of a positive constant $\bc_{\Vap}$ and a large enough deterministic time $t_{\Vap}>0$, such that,
\begin{equation}
\label{th:genest16}\mathcal{H}_{t}(\mathbf{z}_{t})\mathbb{I}\left(\rho_{\Vap}>t\right)\geq\bc_{\Vap}\|\wz_{t}\|^{2}\mathbb{I}\left(\rho_{\Vap}>t\right)
\end{equation}
for all $t\geq t_{\Vap}$. By~\eqref{lm:bg14} and~\eqref{th:genest14}-\eqref{th:genest15} and making $t_{\Vap}$ larger if necessary, it then follows that there exist a constant $b_{4}(\Vap)>0$ such that
\begin{align}
\label{th:genest17}\East\left[V_{t+1}~|~\mathcal{F}_{t}\right]\mathbb{I}\left(\rho_{\Vap}>t\right)\leq\left[\left(1-b_{4}(\Vap)\alpha_{t}+b_{1}\left(\alpha_{t}(t+1)^{-\tau_{3}}+\alpha^{2}_{t}+\alpha_{t}\beta_{t}\right)\right)V_{t}\right.\\
\left.-b_{2}\left(\beta_{t}-\beta_{t}^{2}\right)\left\|(\wz_{t})_{\PC}\right\|^{2}+b_{3}\left(\alpha_{t}(t+1)^{-\tau_{3}}+\alpha^{2}_{t}+\alpha_{t}\beta_{t}\right)\right]\mathbb{I}\left(\rho_{\Vap}>t\right)
\end{align}
for all $t\geq t_{\Vap}$. Since $\alpha_{t}\rightarrow 0$ and $\beta_{t}\rightarrow 0$ as $t\rightarrow\infty$, by choosing $t_{\Vap}$ large enough we may assert
\begin{equation}
\label{th:genest20}\beta_{t}-\beta_{t}^{2}\geq 0,~~\forall t\geq t_{\Vap},
\end{equation}
\begin{equation}
\label{th:genest18}
b_{4}(\Vap)\alpha_{t}-b_{1}\left(\alpha_{t}(t+1)^{-\tau_{3}}+\alpha^{2}_{t}+\alpha_{t}\beta_{t}\right)\geq (b_{4}(\Vap)/2)\alpha_{t},~~\forall t\geq t_{\Vap},
\end{equation}
and the existence of positive constants $b_{5}$ and $\tau_{4}$ such that
\begin{equation}
\label{th:genest19}b_{3}\left(\alpha_{t}(t+1)^{-\tau_{3}}+\alpha^{2}_{t}+\alpha_{t}\beta_{t}\right)\leq b_{5}\alpha_{t}(t+1)^{-\tau_{4}},~~\forall t\geq t_{\Vap}.
\end{equation}
We thus obtain for $t\geq t_{\Vap}$
\begin{equation}
\label{th:genest21}\East\left[V_{t+1}~|~\mathcal{F}_{t}\right]\mathbb{I}\left(\rho_{\Vap}>t\right)\leq\left[\left(1-(b_{4}(\Vap)/2)\alpha_{t}\right)V_{t}+b_{5}\alpha_{t}(t+1)^{-\tau_{4}}\right]\mathbb{I}\left(\rho_{\Vap}>t\right).
\end{equation}
Note that, by definition of $\Gamma_{\Vap}$,
\begin{equation}
\label{th:genest22}\|\wz_{t}\|^{2}\geq\Vap^{2}~~\mbox{on $\{\wz_{t}\in\Gamma_{\Vap}\}$},
\end{equation}
and, hence, by~\eqref{lm:bg14} we conclude that there exists a constant $b_{6}(\Vap)>0$ such that
\begin{equation}
\label{th:genest23}V_{t}\geq b_{6}(\Vap)~~\mbox{on $\{\rho_{\Vap}>t\}$}.
\end{equation}
By~\eqref{th:genest21} we then have for all $t\geq t_{\Vap}$
\begin{equation}
\label{th:genest24}\East\left[V_{t+1}~|~\mathcal{F}_{t}\right]\mathbb{I}\left(\rho_{\Vap}>t\right)\leq\left[V_{t}-b_{7}(\Vap)\alpha_{t}+b_{5}\alpha_{t}(t+1)^{-\tau_{4}}\right]\mathbb{I}\left(\rho_{\Vap}>t\right)
\end{equation}
with $b_{7}(\Vap)$ being another positive constant. Finally, the observation that $\left(b_{7}(\Vap)/2\right)\alpha_{t}\geq b_{5}\alpha_{t}(t+1)^{-\tau_{4}}$ eventually leads to
\begin{align}
\label{th:genest25}\East\left[V_{t+1}~|~\mathcal{F}_{t}\right]\mathbb{I}\left(\rho_{\Vap}>t\right)\leq\left[V_{t}-\left(b_{7}(\Vap)/2\right)\alpha_{t}\right]\mathbb{I}\left(\rho_{\Vap}>t\right)\\=V_{t}\mathbb{I}\left(\rho_{\Vap}>t\right)-b_{8}(\Vap)\alpha_{t}\mathbb{I}\left(\rho_{\Vap}>t\right)
\end{align}
for all $t\geq t_{\Vap}$ (making $t_{\Vap}$ larger if necessary), where $b_{8}(\Vap)=b_{7}(\Vap)/2$.

By~\eqref{th:genest13} we then obtain
\begin{align}
\label{th:genest26}\East\left[V^{\Vap}_{t+1}~|~\mathcal{F}_{t}\right]\leq V_{t}\mathbb{I}\left(\rho_{\Vap}>t\right)+V_{t_{\Vap}}\mathbb{I}\left(\rho_{\Vap}\leq t\right)-b_{8}(\Vap)\alpha_{t}\mathbb{I}\left(\rho_{\Vap}>t\right)\\
= V^{\Vap}_{t}-b_{8}(\Vap)\alpha_{t}\mathbb{I}\left(\rho_{\Vap}>t\right)
\end{align}
for all $t\geq t_{\Vap}$. Note that the $\{\mathcal{F}_{t}\}$-adapted process $\{V^{\Vap}_{t}\}_{t\geq t_{\Vap}}$ satisfies $\East[V^{\Vap}_{t+1}|\mathcal{F}_{t}]\leq V^{\Vap}_{t}$ for all $t\geq t_{\Vap}$; hence, being a (non-negative) supermartingale it converges, i.e., there exists a finite random variable $V_{\Vap}^{\ast}$ such that $V^{\Vap}_{t}\rightarrow V^{\ast}_{\Vap}$ a.s. as $t\rightarrow\infty$. Now consider the $\{\mathcal{F}_{t}\}$-adapted $\mathbb{R}_{+}$-valued process $\{W^{\Vap}_{t}\}$ given by
\begin{equation}
\label{th:genest27} W_{t}^{\Vap}=V_{t}^{\Vap}+b_{8}(\Vap)\sum_{s=0}^{t-1}\alpha_{s}\mathbb{I}\left(\rho_{\Vap}>s\right),
\end{equation}
and note that, by~\eqref{th:genest26} we obtain
\begin{equation}
\label{th:genest28}\East\left[W^{\Vap}_{t+1}~|~\mathcal{F}_{t}\right]\leq V^{\Vap}_{t}-b_{8}(\Vap)\alpha_{t}\mathbb{I}\left(\rho_{\Vap}>t\right)+b_{8}(\Vap)\sum_{s=0}^{t}\alpha_{s}\mathbb{I}\left(\rho_{\Vap}>s\right)=W^{\Vap}_{t}
\end{equation}
for all $t\geq t_{\Vap}$; hence $\{W^{\Vap}_{t}\}_{t\geq t_{\Vap}}$ is a non-negative supermartingale and there exists a finite random variable $W_{\Vap}^{\ast}$ such that $W^{\Vap}_{t}\rightarrow W^{\ast}_{\Vap}$ a.s. as $t\rightarrow\infty$. We then conclude by~\eqref{th:genest27} that the following limit exists:
\begin{equation}
\label{th:genest29}\lim_{t\rightarrow\infty}b_{8}(\Vap)\sum_{s=0}^{t-1}\alpha_{s}\mathbb{I}\left(\rho_{\Vap}>s\right)=W^{\ast}_{\Vap}-V^{\ast}_{\Vap}<\infty~~\mbox{a.s.}
\end{equation}
Given that $\sum_{s=0}^{t-1}\alpha_{s}\rightarrow\infty$ as $t\rightarrow\infty$, the limit condition in~\eqref{th:genest29} is fulfilled only if the summation terminates at a finite time a.s., i.e., we must have $\rho_{\Vap}<\infty$ a.s.

To summarize, we have for each $\Vap\in (0,1)$, $\rho_{\Vap}<\infty$ a.s., i.e., the process $\{\wz_{t}\}$ exits the set $\Gamma_{\Vap}$ in finite time a.s. In particular, for each positive integer $r>1$, let $\rho_{1/r}$ be the stopping time obtained by choosing $\Vap=1/r$ and consider the sequence $\{\wz_{\rho_{1/r}}\}$ (which is well defined due to the a.s. finiteness of each $\rho_{1/r}$) and note that, by definition,
\begin{equation}
\label{th:genest30}\left\|\wz_{\rho_{1/r}}\right\|\in [0,1/r)\cup (r,\infty)~~\mbox{a.s.}
\end{equation}
However, the a.s. boundedness of the sequence $\{\wz_{t}\}$ (see Lemma~\ref{lm:bg}) implies that
\begin{equation}
\label{th:genest31}\Past\left(\left\|\wz_{\rho_{1/r}}\right\|>r~~\mbox{i.o.}\right)=0,
\end{equation}
where i.o. stands for infinitely often as $r\rightarrow\infty$. Hence, by~\eqref{th:genest30} we conclude that there exists a finite random integer valued random variable $r^{\ast}$ such that $\|\wz_{\rho_{1/r}}\|<1/r$ for all $r\geq r^{\ast}$. This, in turn implies that $\|\wz_{\rho_{1/r}}\|\rightarrow 0$ as $r\rightarrow\infty$ a.s., and, in particular, we obtain
\begin{equation}
\label{th:genest32}\Past\left(\liminf_{t\rightarrow\infty}\left\|\wz_{t}\right\|=0\right)=1.
\end{equation}
By~\eqref{lm:bg14} we may also conclude that $\liminf_{t\rightarrow\infty}V_{t}=0$ a.s. Noting that the limit of $\{V_{t}\}$ exists a.s. (see Corollary~\ref{corr:bg}) we further obtain $V_{t}\rightarrow 0$ as $t\rightarrow\infty$ a.s., from which, by another application of~\eqref{lm:bg14}, we conclude that $\wz_{t}\rightarrow 0$ as $t\rightarrow\infty$ a.s. and the desired consistency assertion follows.
\end{proof}

The other major result of this section concerns the pathwise convergence rate of the processes $\{\mathbf{z}_{n}(t)\}$ to $\abtheta$, stated as follows:
\begin{theorem}
\label{th:genrate} Let the processes $\{\mathbf{z}_{n}(t)\}$ be defined as in~\eqref{th:genest1} and the assumptions and hypotheses of Theorem~\ref{th:genest} hold. Then, there exists a constant $\mu>0$ such that for all $n$ we have
\begin{equation}
\label{th:genrate1}\Past\left(\lim_{t\rightarrow\infty}(t+1)^{\mu}\left\|\mathbf{z}_{n}(t)-\abtheta\right\|=0\right)=1.
\end{equation}
\end{theorem}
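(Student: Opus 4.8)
The plan is to bootstrap from the already-established consistency (Theorem~\ref{th:genest}) to a polynomial convergence rate by a localized Lyapunov argument. Since $\wz_{t}\to\vzero$ a.s., for any $\Vap>0$ there is a.s. a finite (random) time after which $\mathbf{z}_{t}$ stays in the ball $\mathbb{B}_{\Vap}(\abbtheta)$, so the tail behavior is governed by the dynamics near $\abbtheta$. First I would localize: fix $\Vap\in(0,1)$ small and define the stopping time $\sigma_{\Vap}=\inf\{t : \|\wz_{t}\|>\Vap\}$; on $\{\sigma_{\Vap}=\infty\}$ the process is confined to a neighborhood of the equilibrium, and $\Past(\cup_{\Vap}\{\sigma_{\Vap}=\infty\})=1$ by consistency. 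Within this neighborhood, the strict monotonicity from Proposition~\ref{prop:analytic}(3) gives a local lower bound of the form $\wz_{t}^{\top}(\bh(\mathbf{z}_{t})-\bh(\abbtheta))\geq c_{\Vap}\|\wz_{t}\|^{2}$ (using positive definiteness of the global Fisher information $I(\abtheta)$ via Proposition~\ref{prop:inf}), and the consensus term contributes a non-negative penalty $\propto \beta_{t}\|(\wz_{t})_{\PC}\|^{2}$; combining with the gain-convergence hypothesis~\eqref{lm:bg1}, the Lyapunov recursion~\eqref{lm:bg7} localizes (on $\{\sigma_{\Vap}>t\}$) to
\begin{equation*}
\East\left[V_{t+1}\mid\mathcal{F}_{t}\right]\mathbb{I}(\sigma_{\Vap}>t)\leq\left(1-c\,\alpha_{t}+O(\alpha_{t}(t+1)^{-\tau_{3}}+\alpha_{t}^{2}+\alpha_{t}\beta_{t})\right)V_{t}\,\mathbb{I}(\sigma_{\Vap}>t)+O(\alpha_{t}^{2}+\alpha_{t}\beta_{t}+\alpha_{t}(t+1)^{-\tau_{3}})
\end{equation*}
for $t$ large, where $V_{t}$ is as in~\eqref{lm:bg5} and $c>0$ is the effective local contraction rate coming from $\lambda_{1}(I(\abtheta))$ and $\lambda_{1}(\mathcal{K}^{-1})$.

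Next I would invoke a deterministic ``rate lemma'' for recursions of the type $u_{t+1}\leq(1-c_{1}/(t+1))u_{t}+c_{2}/(t+1)^{1+\epsilon}$ (a standard stochastic-approximation estimate; see e.g.~\cite{Nevelson}), which yields $u_{t}=O((t+1)^{-\delta})$ for $\delta=\min\{c_{1},\epsilon\}-$ (any small loss). Since $\alpha_{t}=1/(t+1)$, the driving noise and bias terms are each $O((t+1)^{-1-\epsilon})$ for some $\epsilon>0$ — specifically $\epsilon=\min\{\tau_{3},1-2\tau_{2},\ldots\}$ down to a positive number, using $\tau_{2}<1/2$ from Assumption~\ref{ass:weight} and $\tau_{3}>0$ from~\eqref{lm:bg1}. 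Applying this (in an almost-sure pathwise form, after passing to the event $\{\sigma_{\Vap}=\infty\}$ where the localized recursion holds for all large $t$) gives $V_{t}=O((t+1)^{-2\mu})$ a.s. for some $\mu>0$, hence $\|\wz_{t}\|=O((t+1)^{-\mu})$ a.s. by~\eqref{lm:bg14}. The choice of $\mu$ can be made uniform over $n$ since the bound is on the aggregate $V_{t}$. One technical point: the ``rate lemma'' is cleanest in expectation, so to get the pathwise statement~\eqref{th:genrate1} I would either (a) use a supermartingale-plus-summable-remainder decomposition as in the proof of Theorem~\ref{th:genest} combined with a pathwise comparison, or (b) pass through an $\mathcal{L}_{p}$ bound $\East[V_{t}^{p}]=O((t+1)^{-2p\mu'})$ and apply Borel--Cantelli/Chebyshev along a subsequence, then fill gaps by the a.s. boundedness of increments.

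The main obstacle I anticipate is handling the consensus (network) coupling simultaneously with the polynomial rate: the penalty term is $\beta_{t}\|(\wz_{t})_{\PC}\|^{2}$ with $\beta_{t}\sim b(t+1)^{-\tau_{2}}$ decaying \emph{slower} than $\alpha_{t}$, so one must show that $\|(\wz_{t})_{\PC}\|$ is controlled at a rate at least as fast as the claimed $\mu$ — i.e. the disagreement among agents decays faster than the overall estimation error. This is where the mixed time-scale structure ($\tau_{2}<1/2$ separating the consensus and innovation scales) is essential: one first establishes a faster rate for $\|(\wz_{t})_{\PC}\|$ (by a separate Lyapunov argument on $\wz_{t}^{\top}(I_{N}\otimes\mathcal{K}^{-1})\wz_{t}$ restricted to $\PC$, exploiting $\lambda_{2}(\OL)>0$ and the $\beta_{t}/\alpha_{t}\to\infty$ scaling), then feeds that back into the recursion for the full $V_{t}$ so that the $\PC$-component is a negligible perturbation at the $\mu$-scale. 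Carefully bookkeeping the exponents — so that the final $\mu$ is a genuine positive constant depending only on $\tau_{2},\tau_{3}$ and the spectral gaps — is the delicate part; everything else is a routine adaptation of the supermartingale machinery already developed for Lemma~\ref{lm:bg} and Theorem~\ref{th:genest}.
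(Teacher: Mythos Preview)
Your overall architecture---localize near $\abbtheta$ via a stopping time, extract a contractive Lyapunov recursion, then push through a rate---matches the paper's. But there is a genuine gap in the step where you claim
\[
\wz_{t}^{\top}\bigl(\bh(\mathbf{z}_{t})-\bh(\abbtheta)\bigr)\;\geq\; c_{\Vap}\,\|\wz_{t}\|^{2}
\]
locally, ``using positive definiteness of the global Fisher information $I(\abtheta)$.'' This inequality is false under the hypotheses. The left side equals $\sum_{n}(\mathbf{z}_{n}(t)-\abtheta)^{\top}(h_{n}(\mathbf{z}_{n}(t))-h_{n}(\abtheta))$, which to first order is $\sum_{n}(\mathbf{z}_{n}(t)-\abtheta)^{\top}I_{n}(\abtheta)(\mathbf{z}_{n}(t)-\abtheta)$. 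Only \emph{global} observability is assumed (Assumption~\ref{ass:globobs}), so each local $I_{n}(\abtheta)$ may be singular; if $\mathbf{z}_{n}(t)-\abtheta$ lies in $\ker I_{n}(\abtheta)$ while the other agents sit at $\abtheta$, the sum vanishes yet $\|\wz_{t}\|>0$. In other words, the innovation term alone provides \emph{no} contraction along directions of local unobservability, and you cannot obtain the constant $c$ in your displayed recursion from Proposition~\ref{prop:analytic}(3) applied componentwise.

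The paper closes this gap precisely through the consensus decomposition you relegate to a secondary ``obstacle.'' It first proves Lemma~\ref{lm:consrate}, giving $(t+1)^{\tau}\|(\wz_{t})_{\PC}\|\to 0$ for $\tau<1-\tau_{2}$, then localizes (via Egorov) to an event on which simultaneously $\|\wz_{t}\|\leq\Vap_{0}$ and $(t+1)^{\tau}\|(\wz_{t})_{\PC}\|\leq\Vap_{0}$. On this event it splits $\mathcal{H}_{t}(\mathbf{z}_{t})$: the $\mathcal{C}$-part becomes $(\mathbf{z}_{t}^{a}-\abtheta)^{\top}(h(\mathbf{z}_{t}^{a})-h(\abtheta))$ with the \emph{global} $h$, where positive definiteness of $I(\abtheta)$ legitimately yields $\geq b_{4}V_{t}$; the cross terms involving $(\wz_{t})_{\PC}$ are then bounded by $O((t+1)^{-\tau})$ using local Lipschitzness and the already-established disagreement rate. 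So the consensus rate is not a perturbation to be controlled after the fact---it is what makes the contraction constant exist at all, by forcing the agent states close enough to $\mathcal{C}$ that the global Fisher information governs the dynamics. Your last paragraph has the right ingredient (a separate rate for $\|(\wz_{t})_{\PC}\|$) but for the wrong reason; once you reframe it this way, the rest of your plan (option~(a), scaling $V_{t}$ by $(t+1)^{\overline{\mu}}$ and running a supermartingale argument) is exactly what the paper does.
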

\begin{remark}
\label{rem:th:genrate} Note that Theorem~\ref{th:genrate} essentially states that $\|\mathbf{z}_{n}(t)-\abtheta\|=o(t^{-\mu})$ a.s. for each agent $n$, and thus provides a pathwise convergence rate guarantee for generic estimators (with time-varying local innovation gains) of the form given in~\eqref{th:genest1}. In particular, we will use Theorem~\ref{th:genrate} to obtain strong consistency of both the auxiliary and refined estimate sequences introduced in~\eqref{aux:1} and~\eqref{opt:1}-\eqref{gain2}, which are in fact instances of the generic estimator process~\eqref{th:genest1}.
\end{remark}

In order to obtain Theorem~\ref{th:genrate}, we will first quantify the rate of agreement among the individual agent estimates. Specifically, we have the following (see Appendix~\ref{sec:app1} for a proof):
\begin{lemma}
\label{lm:consrate} Let the hypotheses of Lemma~\ref{lm:bg} hold. Then, for each pair of agents $n$ and $l$, we have
\begin{equation}
\label{lm:consrate1}\Past\left(\lim_{t\rightarrow\infty}(t+1)^{\tau}\|\mathbf{z}_{n}(t)-\mathbf{z}_{l}(t)\|=0\right)=1,
\end{equation}
for all $\tau\in (0,1-\tau_{2})$.
\end{lemma}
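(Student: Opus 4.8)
The plan is to analyze the disagreement process $(\wz_t)_{\PC}$ directly via the recursion~\eqref{lm:bg3}. Projecting~\eqref{lm:bg3} onto $\PC$ and recalling that the consensus subspace is $L_t$-invariant only in the mean (so the martingale part of $L_t$ contributes a cross term), I would write a recursion of the form $(\wz_{t+1})_{\PC} = (\wz_t)_{\PC} - \beta_t (\OL \otimes I_M)(\wz_t)_{\PC} + \beta_t \widetilde{U}_t + \alpha_t (\text{innovation-type terms projected onto } \PC)$, where $\widetilde{U}_t$ is the zero-mean perturbation coming from $L_t - \OL$ and the innovation terms are bounded (in $\mathcal{L}_2$ and, after the boundedness established in Lemma~\ref{lm:bg}, pathwise) by a constant times $\|\wz_t\|$ plus the exponential-family noise $\bg(\vy_t)-\bh(\abbtheta)$. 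The key structural fact is that $\OL$ restricted to $\PC$ has smallest eigenvalue $\lambda_2(\OL) > 0$ by Assumption~\ref{ass:conn}, so the consensus potential $\beta_t(\OL\otimes I_M)$ acts as a contraction of effective strength $\lambda_2(\OL)\beta_t \sim b\lambda_2(\OL)(t+1)^{-\tau_2}$ on the disagreement component.

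The core of the argument is then a comparison/bootstrap estimate: define $Q_t = \|(\wz_t)_{\PC}\|^2$ and derive, using the contraction above together with Lemma~\ref{lm:bg} (so all the $h_n$-terms are bounded a.s.\ along sample paths) and the moment bounds on $\bg(\vy_t)$, an inequality of the schematic form $\East[Q_{t+1}\mid \mathcal{F}_t] \le (1 - c\beta_t) Q_t + C\alpha_t^2 + C\alpha_t\beta_t\|\wz_t\| + \beta_t^2(\text{noise variance})$. Since $\alpha_t = (t+1)^{-1}$ decays strictly faster than $\beta_t = b(t+1)^{-\tau_2}$ (as $\tau_2 < 1/2 < 1$), the driving terms are of smaller order relative to the contraction rate. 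I would invoke a standard stochastic-approximation-type lemma (of the kind in~\cite{KarMouraRamanan-Est-2008,KarMoura-LinEst-JSTSP-2011} on decay rates of recursions $v_{t+1}\le(1-r_1 t^{-\delta_1})v_t + r_2 t^{-\delta_2}$) to conclude that $Q_t = o((t+1)^{-2\tau})$ a.s.\ for any $\tau < 1-\tau_2$ — the achievable exponent being limited precisely by the gap between the $\alpha_t$-order driving term $\alpha_t\beta_t \sim (t+1)^{-(1+\tau_2)}$ and the contraction strength $\beta_t\sim(t+1)^{-\tau_2}$, which yields residual order $(t+1)^{-(1-\tau_2)}$ for $\sqrt{Q_t}$. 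From $\|\mathbf{z}_n(t) - \mathbf{z}_l(t)\| \le 2\|(\wz_t)_{\PC}\|$ (any two block-components differ by at most twice the norm of the projection onto $\PC$), \eqref{lm:consrate1} follows.

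The main obstacle is handling the noise term $\beta_t(L_t - \OL)\otimes I_M)\,\wz_t$ carefully: it is a martingale difference, so it does not bias the conditional-expectation recursion, but to get an a.s.\ (pathwise) rate — not merely an $\mathcal{L}_2$ rate — I need to control its cumulative effect, e.g.\ via a weighted-sum martingale convergence argument or by a moment bound of high enough order followed by Borel–Cantelli, exploiting that $\|L_t\|$ has finite moments of all orders (the Laplacians are i.i.d.\ and bounded for a fixed agent set, or at least have all moments). A secondary technical point is that Lemma~\ref{lm:bg} gives a.s.\ boundedness but not a uniform deterministic bound, so the constants $c_1$, etc., in the $h_n$-estimates are random; I would circumvent this by working on the events $\{\sup_t\|\wz_t\| \le R\}$ for increasing $R$, proving the rate on each such event and then taking $R\to\infty$, since their union has probability one.
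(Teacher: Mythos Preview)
Your overall strategy is sound and would work, but it differs from the paper's route in a structurally important way, and your schematic recursion contains a bookkeeping slip worth flagging.

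\textbf{What the paper does.} The paper does \emph{not} split $L_{t}=\OL+(L_{t}-\OL)$. Instead it works with the norm $\|\bz_{t}\|$ (not the square) of the disagreement vector $\bz_{t}=\vecc(\mathbf{z}_{n}(t)-\mathbf{z}^{a}_{t})$ and invokes two prior lemmas from~\cite{Kar-AdaptiveDistEst-SICON-2012}: a \emph{random-contraction} estimate (Lemma~\ref{lm:conn}) giving an $\{\mathcal{F}_{t+1}\}$-adapted rate $r_{t}\in[0,1]$ with $\|(I_{NM}-\beta_{t}L_{t}\otimes I_{M})\bz_{t}\|\le(1-r_{t})\|\bz_{t}\|$ and $\East[r_{t}\mid\mathcal{F}_{t}]\ge c_{r}(t+1)^{-\tau_{2}}$, and a pathwise decay lemma (Lemma~\ref{lm:mean-conv}) for recursions of the form $w_{t+1}\le(1-r_{1}(t))w_{t}+r_{2}(t)U_{t}(1+J_{t})$. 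Applying these with $\delta_{1}=\tau_{2}$, $\delta_{2}=1$, and $J_{t}=\|\bg(\mathbf{y}_{t})-\bh(\abbtheta)\|$ (which has moments of all orders, so one may take $\varepsilon\to\infty$ in the lemma) gives the a.s.\ rate $\tau<1-\tau_{2}$ directly. The Laplacian randomness is thus absorbed into the random contraction $r_{t}$ rather than treated as an additive martingale perturbation, and the a.s.\ conclusion comes packaged inside Lemma~\ref{lm:mean-conv}.

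\textbf{How your route compares.} Your mean/fluctuation decomposition and squared-norm Lyapunov $Q_{t}=\|(\wz_{t})_{\PC}\|^{2}$ are a legitimate alternative. Two remarks. First, in your schematic inequality the cross term is not $C\alpha_{t}\beta_{t}\|\wz_{t}\|$; expanding carefully, the dominant cross contribution is $O(\alpha_{t}\sqrt{Q_{t}})$, which after Young's inequality yields a driving term $O(\alpha_{t}^{2}/\beta_{t})=O((t+1)^{-(2-\tau_{2})})$. That, together with the $(1-c\beta_{t})$ contraction, is what produces $Q_{t}=O((t+1)^{-2(1-\tau_{2})})$ and hence the claimed exponent for $\sqrt{Q_{t}}$; your stated ``$\alpha_{t}\beta_{t}$ vs.\ $\beta_{t}$'' accounting does not actually reproduce that number. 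Second, the passage from the conditional-expectation inequality to the a.s.\ rate---which you correctly flag as the main obstacle---is where the paper's approach buys something: by never squaring and by keeping the realized $L_{t}$ in the contraction factor, the paper lands immediately in the setting of Lemma~\ref{lm:mean-conv}, which already delivers the pathwise conclusion (using the moment condition $\East[J_{t}^{2+\varepsilon}]<\infty$ for arbitrary $\varepsilon$). Your Borel--Cantelli/supermartingale route can be pushed through, but requires an additional argument to avoid losing a factor of two in the exponent when lifting from $\mathcal{L}_{1}$ to a.s. Your plan to localize on $\{\sup_{t}\|\wz_{t}\|\le R\}$ is exactly right and mirrors how the paper handles the random constants.
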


We now complete the proof of Theorem~\ref{th:genrate}.
\begin{proof}[Proof of Theorem~\ref{th:genrate}]
In what follows we reuse the notation and intermediate processes constructed in the proofs of Lemma~\ref{lm:bg} and Theorem~\ref{th:genest}. Recall $\{V_{t}\}$ to be the $\{\mathcal{F}_{t}\}$-adapted process defined in~\eqref{lm:bg5}. By~\eqref{th:genest15} (and the development preceding it) we note that there exist positive constants $b_{1}$, $b_{2}$, and $b_{3}$, and a deterministic time $t_{1}$ (large enough), such that,
\begin{align}
\label{th:genrate15}\East\left[V_{t+1}~|~\mathcal{F}_{t}\right]\leq\left(1+b_{1}\left(\alpha_{t}(t+1)^{-\tau_{3}}+\alpha^{2}_{t}+\alpha_{t}\beta_{t}\right)\right)V_{t}-2\alpha_{t}\mathcal{H}_{t}(\mathbf{z}_{t})\\
-b_{2}\left(\beta_{t}-\beta_{t}^{2}\right)\left\|(\wz_{t})_{\PC}\right\|^{2}+b_{3}\left(\alpha_{t}(t+1)^{-\tau_{3}}+\alpha^{2}_{t}+\alpha_{t}\beta_{t}\right)
\end{align}
for all $t\geq t_{1}$, where the function $\mathcal{H}_{t}(\cdot)$ is defined in~\eqref{prop:Lg3}. By~\eqref{prop:Lg6} we obtain
\begin{align}
\label{th:genrate16}\mathcal{H}_{t}(\mathbf{z})\geq \left(\mathbf{z}_{\C}-\abbtheta\right)^{\top}\left(\bh(\mathbf{z}_{\C})-\bh(\abbtheta)\right)+\left(\mathbf{z}_{\PC}\right)^{\top}\left(\bh(\mathbf{z})-\bh(\abbtheta)\right)\\+\left(\mathbf{z}_{\C}-\abbtheta\right)^{\top}\left(\bh(\mathbf{z})-\bh(\mathbf{z}_{\C})\right)\\
=\left(\mathbf{z}^{a}-\abtheta\right)^{\top}\left(h(\mathbf{z}^{a})-h(\abtheta)\right)+\left(\mathbf{z}_{\PC}\right)^{\top}\left(\bh(\mathbf{z})-\bh(\abbtheta)\right)\\+\left(\mathbf{z}_{\C}-\abbtheta\right)^{\top}\left(\bh(\mathbf{z})-\bh(\mathbf{z}_{\C})\right)
\end{align}
for all $\mathbf{z}\in\mathbb{R}^{NM}$.

Note that, by Proposition~\ref{prop:analytic}, $h(\cdot)$ is continuously differentiable with positive definite gradient $\nabla_{\btheta}h(\abtheta)=I(\abtheta)$ at $\abtheta$; hence, by the mean-value theorem, there exists $\Vap_{0}>0$ such that for all $\btheta\in\mathbb{B}_{\Vap_{0}}(\abtheta)$ we have
\begin{equation}
\label{th:genrate17} h(\btheta)-h(\abtheta)=\left(I(\abtheta)+R(\btheta,\abtheta)\right)\left(\btheta-\abtheta\right),
\end{equation}
where $R(\cdot,\abtheta)$ is a measurable $\mathbb{R}^{M\times M}$-valued function of $\btheta$ such that
\begin{equation}
\label{th:genrate18}\left\|R(\btheta,\abtheta)\right\|\leq \frac{\lambda_{1}(I(\abtheta))}{2}~~~\forall~\btheta\in\mathbb{B}_{\Vap_{0}}(\abtheta),
\end{equation}
with $\lambda_{1}(I(\abtheta))>0$ denoting the smallest eigenvalue of $I(\abtheta)$. Also, observing that the function $\bh(\cdot)$ is locally Lipschitz, we may conclude that there exists a constant $\ell_{\Vap_{0}}$ such that
\begin{equation}
\label{th:genrate1000}
\left\|\bh(\mathbf{z})-\bh(\pz)\right\|\leq\ell_{\Vap_{0}}\left\|\mathbf{z}-\pz\right\|~~~\forall~\mathbf{z},\pz\in\mathbb{B}_{\Vap_{0}}(\abbtheta).
\end{equation}
Now note that, by Theorem~\ref{th:genest}, $\wz_{t}\rightarrow\mathbf{0}$ a.s. as $t\rightarrow\infty$, and, by Lemma~\ref{lm:consrate}, there exists a constant $\tau>0$ such that
\begin{equation}
\label{th:genrate19}\Past\left(\lim_{t\rightarrow\infty}(t+1)^{\tau}\left\|\left(\mathbf{z}_{t}\right)_{\PC}\right\|=0\right)=1.
\end{equation}
Now consider $\delta>0$ (arbitrarily small) and note that by Egorov's theorem there exists a (deterministic) time $t_{\delta}>0$ (chosen to be larger than $t_{1}$ in~\eqref{th:genrate15}), such that $\Past\left(\mathcal{A}_{\delta}\right)\geq 1-\delta$, where $\mathcal{A}_{\delta}$ denotes the event
\begin{equation}
\label{th:genrate20}
\mathcal{A}_{\delta}=\left\{\sup_{t\geq t_{\delta}}\left\|\mathbf{z}_{t}-\abbtheta\right\|\leq\Vap_{0}\right\}\bigcup\left\{\sup_{t\geq t_{\delta}}(t+1)^{\tau}\left\|\left(\mathbf{z}_{t}\right)_{\PC}\right\|\leq\Vap_{0}\right\}.
\end{equation}
Consequently, denoting by $\rho_{\delta}$ the $\{\mathcal{F}_{t}\}$ stopping time
\begin{equation}
\label{th:genrate21}
\rho_{\delta}=\inf\left\{t\geq t_{\delta}~:~\mbox{$\left\|\mathbf{z}_{t}-\abbtheta\right\|>\Vap_{0}$ or $(t+1)^{\tau}\left\|\left(\mathbf{z}_{t}\right)_{\PC}\right\|>\Vap_{0}$}\right\},
\end{equation}
we have that
\begin{equation}
\label{th:genrate22}\Past\left(\rho_{\delta}=\infty\right)\geq 1-\delta.
\end{equation}
Now consider $t\in [t_{\delta},\rho_{\delta})$; noting that
\begin{equation}
\label{th:genrate23}
\|\mathbf{z}^{a}_{t}-\abtheta\|\leq\|\mathbf{z}_{t}-\abbtheta\|\leq \Vap_{0},
\end{equation}
we have by the construction in~\eqref{th:genrate17}-\eqref{th:genrate18}
\begin{align}
\label{th:genrate24}
&
\left(\left(\mathbf{z}_{t}\right)_{\C}-\abbtheta\right)^{\top}\left(\bh\left(\left(\mathbf{z}_{t}\right)_{\C}\right) -\bh\left(\abbtheta\right)\right)=\left(\mathbf{z}^{a}_{t}-\abtheta\right)^{\top}\left(h(\mathbf{z}^{a}_{t}-h(\abtheta)\right)\\
&\geq\left(\|I(\abtheta)\|-\|R(\mathbf{z}^{a}_{t},\abtheta)\|\right)\left\|\mathbf{z}^{a}_{t}-\abtheta\right\|^{2}\geq (1/2)\lambda_{1}(I(\abtheta))\left\|\mathbf{z}^{a}_{t}-\abtheta\right\|^{2}\\
&\geq b_{4}V_{t}
\end{align}
for some constant $b_{4}>0$.

Similarly, using~\eqref{th:genrate1000}, we have the following inequalities for $t\in [t_{\delta},\rho_{\delta})$:
\begin{align}
\label{th:genrate25}
\left(\mathbf{z}_{t}\right)_{\PC}^{\top}\left(\bh(\mathbf{z}_{t})-\bh(\abbtheta)\right)\leq\left\|\left(\mathbf{z}_{t}\right)_{\PC}\right\|\left\|\bh(\mathbf{z}_{t})-\bh(\abbtheta)\right\|\\
\leq\Vap_{0}(t+1)^{-\tau}\ell_{\Vap_{0}}\|\mathbf{z}_{t}-\abbtheta\|\leq\Vap_{0}^{2}(t+1)^{-\tau}\ell_{\Vap_{0}},
\end{align}
and
\begin{align}
\label{th:genrate26}
\left(\left(\mathbf{z}_{t}\right)_{\C}-\abbtheta\right)^{\top}\left(\bh\left(\mathbf{z}_{t}\right)-\bh\left(\left(\mathbf{z}_{t}\right)_{\C}\right)\right)\leq\left\|\left(\mathbf{z}_{t}\right)_{\C}-\abbtheta\right\|.\ell_{\Vap_{0}}\left\|\mathbf{z}_{t}-\left(\mathbf{z}_{t}\right)_{\C}\right\|\\
\leq\Vap_{0}\ell_{\Vap_{0}}\left\|\left(\mathbf{z}_{t}\right)_{\PC}\right\|\leq\Vap_{0}^{2}\ell_{\Vap_{0}}(t+1)^{-\tau}.
\end{align}
Hence, from~\eqref{th:genrate16} and~\eqref{th:genrate24}-\eqref{th:genrate26}, we conclude that for $t\in [t_{\delta},\rho_{\delta})$ we have
\begin{equation}
\label{th:genrate27}
\mathcal{H}_{t}(\mathbf{z}_{t})\geq b_{4}V_{t}-2\Vap_{0}^{2}\ell_{\Vap_{0}}(t+1)^{-\tau}.
\end{equation}
Let $\{V^{\delta}_{t}\}$ be the $\mathbb{R}_{+}$-valued $\{\mathcal{F}_{t}\}$-adapted process such that $V^{\delta}_{t}=V_{t}\mathbb{I}(t<\rho_{\delta})$ for all $t$. Noting that
\begin{equation}
\label{th:genrate28} V^{\delta}_{t+1}=V_{t+1}\mathbb{I}(t+1<\rho_{\delta})\leq V_{t+1}\mathbb{I}(t<\rho_{\delta}),
\end{equation}
we have
\begin{equation}
\label{th:genrate29}\East\left[V^{\delta}_{t+1}~|~\mathcal{F}_{t}\right]\leq\mathbb{I}(t<\rho_{\delta})\East\left[V_{t+1}~|~\mathcal{F}_{t}\right]~~~\forall t.
\end{equation}
For $t\geq t_{\delta}$ we have by~\eqref{th:genrate27}
\begin{equation}
\label{th:genrate30}\mathcal{H}_{t}(\mathbf{z}_{t})\mathbb{I}(t<\rho_{\delta})\geq \left(b_{4}V_{t}-2\Vap_{0}^{2}\ell_{\Vap_{0}}(t+1)^{-\tau}\right)\mathbb{I}(t<\rho_{\delta}),
\end{equation}
hence, it follows from~\eqref{th:genrate15} and~\eqref{th:genrate29} that
\begin{align}
\label{th:genrate31}
\East\left[V^{\delta}_{t+1}~|~\mathcal{F}_{t}\right]\leq\left(1+b_{1}\left(\alpha_{t}(t+1)^{-\tau_{3}}+\alpha^{2}_{t}+\alpha_{t}\beta_{t}\right)\right)V_{t}\mathbb{I}(t<\rho_{\delta})\\-2\alpha_{t}\left(b_{4}V_{t}-2\Vap_{0}^{2}\ell_{\Vap_{0}}(t+1)^{-\tau}\right)\mathbb{I}(t<\rho_{\delta})\\
-b_{2}\left(\beta_{t}-\beta_{t}^{2}\right)\left\|(\wz_{t})_{\PC}\right\|^{2}+b_{3}\left(\alpha_{t}(t+1)^{-\tau_{3}}+\alpha^{2}_{t}+\alpha_{t}\beta_{t}\right)\\
\leq\left(1-\alpha_{t}\left(2b_{4}-b_{1}(t+1)^{-\tau_{3}}-b_{1}\alpha_{t}-b_{1}\beta_{t}\right)\right)V_{t}^{\delta}-b_{2}\left(\beta_{t}-\beta_{t}^{2}\right)\left\|(\wz_{t})_{\PC}\right\|^{2}\\+\alpha_{t}\left(b_{3}(t+1)^{-\tau_{3}}+b_{3}\alpha_{t}+b_{3}\beta_{t}+4\Vap_{0}^{2}\ell_{\Vap_{0}}(t+1)^{-\tau}\right)
\end{align}
for all $t\geq t_{\delta}$. Observing the decay rates of the various coefficients (see~\eqref{weight}), we conclude that there exist a deterministic time $t^{\prime}_{\delta}\geq t_{\delta}$, and positive constants (independent of $\delta$) $b_{5}$, $b_{6}$ and $\tau_{4}$ such that
\begin{equation}
\label{th:genrate32}
\East\left[V^{\delta}_{t+1}~|~\mathcal{F}_{t}\right]\leq\left(1-b_{5}\alpha_{t}\right)V_{t}^{\delta}+b_{6}\alpha_{t}(t+1)^{-\tau_{4}}
\end{equation}
and $b_{5}\alpha_{t}<1$, for all $t\geq t^{\prime}_{\delta}$.

Let us now choose a constant $\omu$ (independently of $\delta$) such that $\omu\in\left(0,b_{5}\wedge\tau_{4}\wedge 1\right)$. Then, using the inequality
\begin{equation}
\label{th:genrate33} (t+1)^{\omu}-t^{\omu}\leq \omu t^{\omu-1}
\end{equation}
we have for all $t> t_{\delta}$
\begin{equation}
\label{th:genrate34}
(t+1)^{\omu}\left(1-b_{5}\alpha_{t-1}\right)\leq t^{\omu}\left(1+\omu.t^{-1}\right)\left(1-b_{5}.t^{-1}\right)\leq t^{\omu}\left(1-(b_{5}-\omu).t^{-1}\right)
\end{equation}
and
\begin{equation}
\label{th:genrate35}
(t+1)^{\omu}t^{-1-\tau_{4}}=\left(1+t^{-1}\right)^{\omu}t^{-1-(\tau_{4}-\omu)}\leq\left(1+(t^{\prime}_{\delta})^{-1}\right)^{\omu}t^{-1-(\tau_{4}-\omu)}.
\end{equation}
Thus, from~\eqref{th:genrate32} we obtain
\begin{align}
\label{th:genrate36} \East\left[(t+1)^{\omu}V^{\delta}_{t}~|~\mathcal{F}_{t-1}\right]\leq\left(1-(b_{5}-\omu).t^{-1}\right)t^{\omu}V_{t-1}^{\delta}+b_{6}\left(1+(t^{\prime}_{\delta})^{-1}\right)^{\omu}t^{-1-(\tau_{4}-\omu)}\\
\leq t^{\omu}V_{t-1}^{\delta}+b^{\delta}_{7}t^{-1-(\tau_{4}-\omu)}
\end{align}
for all $t>t^{\prime}_{\delta}$ for some constant $b^{\delta}_{7}>0$. Since $\tau_{4}>\omu$, we have $\sum_{t}t^{-1-(\tau_{4}-\omu)}<\infty$; denoting by $\{W^{\delta}_{t}\}$ the non-negative $\{\mathcal{F}_{t}\}$-adapted process such that
\begin{equation}
\label{th:genrate37}W^{\delta}_{t}=(t+1)^{\omu}V^{\delta}_{t}+b_{6}\left(1+(t^{\prime}_{\delta})^{-1}\right)^{\omu}\sum_{s=t+1}^{\infty}s^{-1-(\tau_{4}-\omu)},
\end{equation}
we have that $\East[W^{\delta}_{t}|\mathcal{F}_{t-1}]\leq W^{\delta}_{t-1}$ for all $t>t^{\prime}_{\delta}$. Hence, the process $\{W^{\delta}_{t}\}_{t\geq t^{\prime}_{\delta}}$ is a non-negative supermartingale and converges a.s. to a finite non-negative random variable $W^{\delta}_{\ast}$. By~\eqref{th:genrate37} we further conclude that $(t+1)^{\omu}V^{\delta}_{t}\rightarrow W^{\delta}_{\ast}$ a.s. as $t\rightarrow\infty$. Now let $\mu\in (0,\omu)$ be another constant (chosen independently of $\delta$); noting that the limit $W^{\delta}_{\ast}$ is finite, the above convergence leads to
\begin{equation}
\label{th:genrate38}\Past\left(\lim_{t\rightarrow\infty}(t+1)^{\mu}V^{\delta}_{t}=0\right)=1.
\end{equation}
By~\eqref{th:genrate38} and the fact that $V^{\delta}_{t}=V_{t}\mathbb{I}(t<\rho_{\delta})$ for all $t$, we conclude that,
\begin{equation}
\label{th:genrate39}\lim_{t\rightarrow\infty}(t+1)^{\mu}V_{t}=0~\mbox{a.s. on $\{\rho_{\delta}=\infty\}$}.
\end{equation}
Hence, by~\eqref{th:genrate22} we obtain
\begin{equation}
\label{th:genrate40}\Past\left(\lim_{t\rightarrow\infty}(t+1)^{\mu}V_{t}=0\right)\geq 1-\delta.
\end{equation}
Since $\delta>0$ was chosen arbitrarily and $\mu>0$ is independent of $\delta$, we have, in fact, $(t+1)^{\mu}V_{t}\rightarrow 0$ a.s. as $t\rightarrow\infty$ by taking $\delta$ to zero. The desired assertion follows immediately by noting the correspondence between the processes $\{\wz_{t}\}$ and $\{V_{t}\}$ (see~\eqref{lm:bg15}).
\end{proof}

The assertions of Theorem~\ref{th:genrate} may readily be extended to the case of non-uniform (over sample paths) convergence of the matrix gain sequences $\{K_{n}(t)\}$ to their designated limit $\mathcal{K}$ as follows (see Appendix~\ref{sec:app1} for a proof):
\begin{corollary}
\label{corr:genrate} Let the sequences $\{\mathbf{z}_{n}(t)\}$ be defined as in~\eqref{th:genest1}. Let Assumptions~\ref{ass:globobs},\ref{ass:conn}, and~\ref{ass:weight} hold as in the hypotheses of Theorem~\ref{th:genrate} and the matrix gain sequences $\{K_{n}(t)\}$ be such that $(t+1)^{\tau_{3}}\|K_{n}(t)-\mathcal{K}\|\rightarrow 0$ a.s. as $t\rightarrow\infty$ for all $n$. Then, the assertions of Theorem~\ref{th:genrate} continue to hold, i.e., there exists $\mu>0$ such that $(t+1)^{\mu}\|\mathbf{z}_{n}(t)-\abtheta\|\rightarrow 0$ a.s. as $t\rightarrow\infty$ for all $n$.
\end{corollary}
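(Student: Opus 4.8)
The plan is to localize the gain process so as to reduce the statement to the uniform-convergence setting already handled in Theorem~\ref{th:genrate}. Fix $\delta>0$. Since $(t+1)^{\tau_{3}}\|K_{n}(t)-\mathcal{K}\|\rightarrow 0$ a.s.\ for each of the finitely many agents $n$, Egorov's theorem (applied to each of the $N$ agents with tolerance $\delta/N$ and intersecting the resulting events) furnishes an event $\mathcal{B}_{\delta}$ with $\Past(\mathcal{B}_{\delta})\geq 1-\delta$ together with a deterministic time $t_{\delta}$ such that
\begin{equation*}
\sup_{t\geq t_{\delta}}\,(t+1)^{\tau_{3}}\left\|K_{n}(t)-\mathcal{K}\right\|\leq\delta\qquad\text{on }\mathcal{B}_{\delta},\ \text{for all }n.
\end{equation*}
Because the $K_{n}(t)$'s are $\{\mathcal{F}_{t}\}$-adapted, the exit time $\sigma_{\delta}=\inf\{t\geq t_{\delta}:(t+1)^{\tau_{3}}\|K_{n}(t)-\mathcal{K}\|>\delta\text{ for some }n\}$ is a genuine $\{\mathcal{F}_{t}\}$-stopping time, and $\mathcal{B}_{\delta}\subseteq\{\sigma_{\delta}=\infty\}$, whence $\Past(\sigma_{\delta}=\infty)\geq 1-\delta$. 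I would then introduce the surrogate gains $\widetilde{K}_{n}(t)=K_{n}(t)\,\mathbb{I}(t<\sigma_{\delta})+\mathcal{K}\,\mathbb{I}(t\geq\sigma_{\delta})$, which are $\{\mathcal{F}_{t}\}$-adapted, inherit the positive semidefiniteness of the $K_{n}(t)$'s, and by construction satisfy $\sup_{t\geq t_{\delta}}(t+1)^{\tau_{3}}\|\widetilde{K}_{n}(t)-\mathcal{K}\|\leq\delta$ \emph{surely}. Let $\{\widetilde{\mathbf{z}}_{n}(t)\}$ denote the process defined exactly as in~\eqref{th:genest1} with $K_{n}(t)$ replaced by $\widetilde{K}_{n}(t)$ and the same initial conditions.

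The next step is to invoke Theorem~\ref{th:genrate} for $\{\widetilde{\mathbf{z}}_{n}(t)\}$. Strictly speaking $\{\widetilde{K}_{n}(t)\}$ need not obey the hypothesis~\eqref{lm:bg1} at \emph{every} tolerance level, only at the single level $\delta$; however, an inspection of the proofs of Lemma~\ref{lm:bg}, Theorem~\ref{th:genest} and Theorem~\ref{th:genrate} shows that the gain-convergence hypothesis is used there only through the existence of \emph{some} deterministic time past which $(t+1)^{\tau_{3}}\|K_{n}(t)-\mathcal{K}\|$ is a.s.\ bounded by a \emph{fixed} constant, and that the admissible decay exponent (chosen via $\omu\in(0,b_{5}\wedge\tau_{4}\wedge 1)$ and then $\mu\in(0,\omu)$ in the proof of Theorem~\ref{th:genrate}) is governed by $\tau_{2},\tau_{3}$ and the fixed model quantities alone. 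Consequently the conclusion of Theorem~\ref{th:genrate} holds for $\{\widetilde{\mathbf{z}}_{n}(t)\}$ with a $\mu>0$ that does not depend on $\delta$: $(t+1)^{\mu}\|\widetilde{\mathbf{z}}_{n}(t)-\abtheta\|\rightarrow 0$ a.s.\ for all $n$.

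Finally, on $\{\sigma_{\delta}=\infty\}$ we have $\widetilde{K}_{n}(t)=K_{n}(t)$ for every $t$, so by induction on $t$ the recursions~\eqref{th:genest1} for $\widetilde{\mathbf{z}}_{n}$ and $\mathbf{z}_{n}$ coincide, i.e.\ $\widetilde{\mathbf{z}}_{n}(t)=\mathbf{z}_{n}(t)$ for all $t$ and all $n$. Hence $(t+1)^{\mu}\|\mathbf{z}_{n}(t)-\abtheta\|\rightarrow 0$ a.s.\ on $\{\sigma_{\delta}=\infty\}$, and therefore
\begin{equation*}
\Past\!\left(\lim_{t\rightarrow\infty}(t+1)^{\mu}\left\|\mathbf{z}_{n}(t)-\abtheta\right\|=0\right)\ \geq\ \Past\!\left(\sigma_{\delta}=\infty\right)\ \geq\ 1-\delta
\end{equation*}
for every $n$. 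Since $\mu$ is independent of $\delta$ and $\delta>0$ was arbitrary, letting $\delta\downarrow 0$ yields the claim.

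The step I expect to demand the most care is the verification flagged in the second paragraph: one must track the dependence of the constants $b_{1},\dots,b_{8}$ in the supermartingale estimates~\eqref{lm:bg9}--\eqref{lm:bg13}, \eqref{th:genest15} and \eqref{th:genrate31}--\eqref{th:genrate32} on the gain tolerance $\delta$ and confirm that, although these constants vary with $\delta$, the terms they multiply decay fast enough (as powers of $(t+1)^{-1}$ with exponents determined by $\tau_{2},\tau_{3}$) that the admissible exponent $\mu$ may be chosen $\delta$-free; the remaining localization bookkeeping is routine.
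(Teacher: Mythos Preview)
Your overall strategy---Egorov localization, surrogate gains that coincide with the true gains on a high-probability event, coupling the two recursions on that event, then letting the tolerance vanish---is exactly the paper's approach, and the bookkeeping you describe is correct. The one substantive difference is how you deal with the mismatch between your surrogate gains and hypothesis~\eqref{lm:bg1}. You keep the rate $\tau_{3}$, so your $\widetilde{K}_{n}(t)$ satisfies the bound only at the \emph{single} level $\delta$, forcing you to reopen the proofs of Lemma~\ref{lm:bg}, Theorem~\ref{th:genest} and Theorem~\ref{th:genrate} and verify that this weaker input suffices and that $\mu$ can be chosen $\delta$-free. The paper sidesteps this entirely: it picks $\tau'\in(0,\tau_{3})$ (independent of the Egorov tolerance $\varepsilon$) and observes that the surrogate bound $\|K_{n}^{\varepsilon}(t)-\mathcal{K}\|\leq c_{\varepsilon}(t+1)^{-\tau_{3}}$ for $t\geq t_{\varepsilon}$ implies $(t+1)^{\tau'}\|K_{n}^{\varepsilon}(t)-\mathcal{K}\|\leq c_{\varepsilon}(t+1)^{-(\tau_{3}-\tau')}\to 0$ \emph{uniformly over sample paths}. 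Hence the full hypothesis~\eqref{lm:bg1} holds at rate $\tau'$, Theorem~\ref{th:genrate} applies as a black box, and its output $\mu$ depends only on $\tau'$ (hence not on $\varepsilon$). This rate-downgrading trick is what you are missing; it buys you a two-line reduction in place of the constant-tracking exercise you correctly flag as the most delicate part of your argument.
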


Note that Corollary~\ref{corr:genrate} is in fact a restatement of Theorem~\ref{th:genrate} under the relaxed assumption that the convergence of the matrix gain sequences $\{K_{n}(t)\}$ need not be uniform over sample paths.

\section{Proofs of Main Results}
\label{sec:proof_main_res}

Throughout this section, Assumption~\ref{ass:globobs} and Assumptions~\ref{ass:conn}-\ref{ass:lingrowth} are assumed to hold.

\subsection{Convergence of Auxiliary Estimates and Adaptive Gains}
\label{subsec:gain}

The first result concerns the consistency of the auxiliary estimate sequence $\{\bx_{n}(t)\}$ at each agent. To this end, noting that the evolution of the auxiliary estimates, see~\eqref{aux:1}, corresponds to a specific instantiation of the generic estimator dynamics analyzed in Theorem~\ref{th:genrate} (with $K_{n}(t)=I_{M}$ for all $n$ and $t$), we immediately have the following:
\begin{lemma}
\label{lm:auxcons} For each $n$, the auxiliary estimate sequence $\{\bx_{n}(t)\}$ (see Section~\ref{subsec:alg}) is a strongly consistent estimate of $\abtheta$. In particular, there exists a positive constant $\mu_{0}$ such that $(t+1)^{\mu_{0}}\|\bx_{n}(t)-\abtheta\|\rightarrow 0$ as $t\rightarrow\infty$ a.s. for all $n$.
\end{lemma}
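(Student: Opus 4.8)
The plan is to recognize that the auxiliary recursion~\eqref{aux:1} is precisely the generic estimator~\eqref{th:genest1} with the matrix gains frozen at $K_{n}(t)=I_{M}$ for every agent $n$ and every time $t$, and then to invoke Theorems~\ref{th:genest} and~\ref{th:genrate} essentially verbatim. Thus the whole argument reduces to checking that the hypotheses of Theorem~\ref{th:genrate} (hence of Theorem~\ref{th:genest}) hold in this special case.

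First I would verify the structural hypotheses: Assumption~\ref{ass:sensmod} (the exponential-family sensing model), Assumption~\ref{ass:lingrowth} (linear growth of the $h_{n}$'s) and Assumption~\ref{ass:conn} (mean connectivity) are in force by the standing assumptions of this section, and the weight sequences $\{\alpha_{t}\}$, $\{\beta_{t}\}$ obey Assumption~\ref{ass:weight} by construction. It then remains to check the gain conditions. The constant process $K_{n}(t)\equiv I_{M}$ is trivially $\{\mathcal{F}_{t}\}$-adapted and $\mathbb{S}_{+}^{M}$-valued; taking $\mathcal{K}=I_{M}\in\mathbb{S}_{++}^{M}$, the designated limit is positive definite, and since $\|K_{n}(t)-\mathcal{K}\|=0$ for all $t$ and $n$, the uniform convergence condition~\eqref{lm:bg1} holds with any exponent $\tau_{3}>0$ (indeed with $t_{\delta}=0$ for every $\delta>0$). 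Global observability (Assumption~\ref{ass:globobs}), also standing in this section, supplies the strict monotonicity of $h(\cdot)$ used inside those proofs.

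With the hypotheses in hand, Theorem~\ref{th:genest} gives $\bx_{n}(t)\rightarrow\abtheta$ a.s. for each $n$ (strong consistency), and Theorem~\ref{th:genrate} gives a constant $\mu>0$, depending only on the model and weight parameters, such that $(t+1)^{\mu}\|\bx_{n}(t)-\abtheta\|\rightarrow 0$ a.s. for all $n$; setting $\mu_{0}=\mu$ finishes the proof. I do not anticipate a genuine obstacle: the statement is a direct corollary of the generic results of Section~\ref{sec:genconsest}, and the only point needing (minor) care is that the constant-gain instantiation legitimately fits the generic framework — namely that the limit $\mathcal{K}=I_{M}$ is positive definite and that the rate condition~\eqref{lm:bg1} is vacuously satisfied.
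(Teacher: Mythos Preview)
Your proposal is correct and matches the paper's own argument essentially verbatim: the paper simply observes that \eqref{aux:1} is the generic estimator \eqref{th:genest1} instantiated with $K_{n}(t)=I_{M}$ for all $n$ and $t$, and declares the result immediate from Theorem~\ref{th:genrate}. Your additional care in verifying that $\mathcal{K}=I_{M}$ is positive definite and that condition~\eqref{lm:bg1} is vacuously satisfied is exactly the kind of check the paper leaves implicit.
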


Lemma~\ref{lm:auxcons} and local Lipschitz continuity of the functions $h_{n}(\cdot)$ lead to the following characterization of the adaptive gain sequences $\{K_{n}(t)\}$~\eqref{gain1} driving the local innovation terms of the agent estimates $\{\mathbf{x}_{n}(t)\}$~\eqref{opt:1} (see Appendix~\ref{sec:app2} for a proof):
\begin{lemma}
\label{lm:gainconv} There exists a positive constant $\tau^{\prime}$ such that, for each $n$, the adaptive gain sequence $\{K_{n}(t)\}$, see~\eqref{gain1}, converges a.s. to $N.I^{-1}(\abtheta)$ at rate $\tau^{\prime}$, i.e.,
\begin{equation}
\label{lm:gainconv1}\mathbb{P}\left((t+1)^{\tau^{\prime}}\left\|K_{n}(t)-N.I^{-1}(\abtheta)\right\|=0\right)=1
\end{equation}
where $I(\cdot)$ denotes the centralized Fisher information, see~\eqref{prop:inf2}.
\end{lemma}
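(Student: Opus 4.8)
The plan is to track the error of each $G_n(t)$ against the target $I(\abtheta)/N$, exploit the fact that the recursion~\eqref{gain2} for $\{G_n(t)\}$ has exactly the same consensus $+$ innovations structure as the estimator recursions analyzed in Section~\ref{sec:genconsest}, and then invert. First I would observe that~\eqref{gain2}, read componentwise in $\mathbb{S}^M$, is a linear consensus $+$ innovations recursion in the stacked variable $G_t = \vecc(G_n(t))$ with innovation ``observation'' $I_n(\bx_n(t))$ and no gain (i.e. $K_n(t)=I_M$). The only difference from the setting of Theorem~\ref{th:genrate} is that the innovation is not an i.i.d.\ noisy observation of a fixed target but a \emph{deterministic} (given the auxiliary estimates) quantity $I_n(\bx_n(t))$ that converges to $I_n(\abtheta)$; since $\psi_n$ is infinitely differentiable (Proposition~\ref{prop:analytic}(1)), $I_n(\cdot)=\nabla_\btheta h_n(\cdot)$ is locally Lipschitz, so by Lemma~\ref{lm:auxcons} we get the pathwise rate $\|I_n(\bx_n(t))-I_n(\abtheta)\| = O((t+1)^{-\mu_0})$ a.s.\ for all $n$. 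Thus the innovation target is being learned at a polynomial rate, which is precisely the kind of vanishing perturbation the supermartingale/comparison arguments of Lemma~\ref{lm:bg} and Theorems~\ref{th:genest}--\ref{th:genrate} are built to absorb.

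The key steps, in order, are: (i) show $\{G_n(t)\}$ stays bounded and in $\mathbb{S}_+^M$ a.s.\ — boundedness by the same Lyapunov/supermartingale estimate as Lemma~\ref{lm:bg} applied to $V_t=\|G_t - \mathbf{1}_N\otimes(I(\abtheta)/N)\|^2$, and positive semidefiniteness because the update~\eqref{gain2} is a convex-combination-type recursion (for $t$ large, $1-\beta_t d_n - \alpha_t>0$, and $I_n(\bx_n(t))\succeq 0$, $G_l(t)\succeq 0$); (ii) establish that $G_n(t) \to I(\abtheta)/N$ a.s., i.e.\ the consensus-plus-innovations recursion drives each local $G_n(t)$ to the network average of the innovation limits $I_n(\abtheta)$, which is $\frac1N\sum_n I_n(\abtheta) = I(\abtheta)/N$ by~\eqref{prop:inf3}; (iii) obtain a polynomial pathwise rate, i.e.\ $(t+1)^{\tau''}\|G_n(t) - I(\abtheta)/N\|\to 0$ a.s.\ for some $\tau''>0$, by mimicking the proof of Theorem~\ref{th:genrate} — decompose the error into its consensus and orthogonal-complement parts (Lemma~\ref{lm:consrate}-type bound on disagreement, and a contraction along the consensus direction because the innovation term supplies an $\alpha_t G_t$-type damping), and run the stopped-supermartingale argument with the extra forcing term $O(\alpha_t (t+1)^{-\mu_0})$; (iv) transfer the rate to $K_n(t)$ via~\eqref{gain1}: since $K_n(t) = (G_n(t) + \varphi_t I_M)^{-1}$, $G_n(t)\to I(\abtheta)/N\succ 0$ and $\varphi_t\to0$, for $t$ large $G_n(t)+\varphi_t I_M$ is uniformly positive definite, so matrix inversion is locally Lipschitz there, giving
\[
\|K_n(t) - N I^{-1}(\abtheta)\| \le C\left(\|G_n(t) - I(\abtheta)/N\| + \varphi_t\right),
\]
and using~\eqref{ass:weight3} (so $\varphi_t = o((t+1)^{-\mu_2})$) we take $\tau' = \tau'' \wedge \mu_2 \wedge$ (something $<1$) to conclude. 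I would remark that this is exactly the form~\eqref{lm:bg1} needed later to invoke Theorem~\ref{th:genest}/Corollary~\ref{corr:genrate} with $\mathcal{K} = N I^{-1}(\abtheta)$.

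The main obstacle I expect is step (iii): getting a genuine polynomial rate rather than mere a.s.\ convergence for $\{G_n(t)\}$. Two subtleties arise. The consensus weight $\beta_t = b/(t+1)^{\tau_2}$ decays \emph{slower} than $\alpha_t = 1/(t+1)$, so the disagreement $\|(G_t)_{\PC}\|$ contracts at the (fast) consensus rate controlled by $\lambda_2(\OL)$ and a Lemma~\ref{lm:consrate}-type argument gives $(t+1)^\tau\|(G_t)_{\PC}\|\to0$ for any $\tau< 1-\tau_2$; meanwhile the average component $\bG_t := \frac1N\sum_n G_n(t)$ obeys an essentially scalar (matrix-valued) recursion $\bG_{t+1} = (1-\alpha_t)\bG_t + \alpha_t \cdot \frac1N\sum_n I_n(\bx_n(t)) + (\text{disagreement coupling})$, whose deviation from $I(\abtheta)/N$ is forced by the two vanishing terms $O(\alpha_t(t+1)^{-\mu_0})$ (learning error) and the disagreement coupling $O(\beta_t (t+1)^{-(1-\tau_2)})$; balancing these and running the comparison lemma $u_{t+1}\le(1-\alpha_t)u_t + C\alpha_t(t+1)^{-\kappa}$ (which yields $u_t = O((t+1)^{-\kappa})$ for $\kappa<1$) pins down $\tau''$. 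One must be careful that the damping coefficient in front of $\bG_t$ is genuinely $\alpha_t$ (coming from the $-\alpha_t G_n(t)$ term in~\eqref{gain2}) and not something weaker — this is what makes the average component, and hence $K_n(t)$, converge at a polynomial rate, and it is the one place where the argument genuinely differs in flavor from the estimator analysis (there the damping along the consensus direction came from strict monotonicity of $h$ near $\abtheta$, here it is linear and explicit). The remaining steps are routine given the machinery already developed in Section~\ref{sec:genconsest}.
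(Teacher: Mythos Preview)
Your proposal is correct and follows essentially the same route as the paper: decompose the error of $\{G_n(t)\}$ into a disagreement part $G_n(t)-G^a_t$ (handled by a Lemma~\ref{lm:consrate}-type argument, in the paper via Lemmas~\ref{lm:conn} and~\ref{lm:mean-conv}, yielding rate $(t+1)^{-\tau}$ for any $\tau<1-\tau_2$) and the network-average part $G^a_t$, then transfer to $K_n(t)$ by local Lipschitz continuity of matrix inversion together with~\eqref{ass:weight3}.

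Two small simplifications relative to your sketch are worth noting. First, the average recursion is exactly
\[
G^a_{t+1}=(1-\alpha_t)G^a_t+\alpha_t\cdot\frac{1}{N}\sum_{n=1}^N I_n(\bx_n(t)),
\]
with \emph{no} disagreement coupling, because $(\mathbf{1}_N\otimes I_M)^\top(L_t\otimes I_M)=\mathbf{0}$; so the only forcing term in $\|G^a_t-(1/N)I(\abtheta)\|$ is the learning error $O(\alpha_t(t+1)^{-\mu_0})$. Second, since this average recursion and the forcing are deterministic given the sample path (there is no i.i.d.\ observation noise in~\eqref{gain2}, only the random Laplacian which disappears upon averaging), the paper dispenses with the stopped-supermartingale machinery of Theorem~\ref{th:genrate} and instead applies the elementary deterministic comparison lemma (Proposition~\ref{prop:Fab-1}) pathwise to obtain the polynomial rate on $G^a_t$. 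Your heavier approach would also work, but the linearity of~\eqref{gain2} makes step~(i) (separate boundedness) and the Lyapunov construction unnecessary.
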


As an immediate consequence of the above development, we have the following consistency of the distributed agent estimates $\{\mathbf{x}_{n}(t)\}$:
\begin{corollary}
\label{corr:xconv} For each $n$, the estimate sequence $\{\mathbf{x}_{n}(t)\}$ \textup{(}see Section~\ref{subsec:alg}\textup{)} is a strongly consistent estimate of $\abtheta$, i.e., $\mathbf{x}_{n}(t)\rightarrow\abtheta$ as $\tri$ a.s.
\end{corollary}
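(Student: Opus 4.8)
The plan is to recognize that the optimal estimate recursion~\eqref{opt:1} is nothing but an instance of the generic distributed estimator~\eqref{th:genest1} in which the matrix gain process is the adaptive gain sequence $\{K_{n}(t)\}$ defined through~\eqref{gain1}--\eqref{gain2}. Hence the desired consistency follows once we verify that these adaptive gains satisfy the (pathwise, possibly non-uniform) convergence hypothesis needed to invoke the generic consistency/rate machinery of Section~\ref{sec:genconsest}.

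First I would fix $\mathcal{K}=N\cdot I^{-1}(\abtheta)$. By Proposition~\ref{prop:inf}, $I(\abtheta)$ is positive definite, so $\mathcal{K}\in\mathbb{S}_{++}^{M}$; moreover, since $G_{n}(t)\in\mathbb{S}_{+}^{M}$ and $\varphi_{t}>0$, the gains $K_{n}(t)=\left(G_{n}(t)+\varphi_{t}I_{M}\right)^{-1}$ are well-defined and lie in $\mathbb{S}_{++}^{M}\subseteq\mathbb{S}_{+}^{M}$, and they are $\{\mathcal{F}_{t}\}$-adapted by construction. Next, by Lemma~\ref{lm:gainconv}, there is a constant $\tau^{\prime}>0$ such that $(t+1)^{\tau^{\prime}}\left\|K_{n}(t)-\mathcal{K}\right\|\rightarrow 0$ a.s. as $\tri$ for each $n$; this convergence need not be uniform over sample paths, so Theorem~\ref{th:genest} does not apply verbatim, but Corollary~\ref{corr:genrate} (the pathwise, non-uniform restatement of Theorem~\ref{th:genrate}) does. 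The remaining structural hypotheses --- Assumptions~\ref{ass:sensmod}, \ref{ass:globobs}, \ref{ass:conn}, \ref{ass:lingrowth} and the weight conditions of Assumption~\ref{ass:weight} --- are in force by the standing assumptions of this section.

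Applying Corollary~\ref{corr:genrate} to $\{\mathbf{x}_{n}(t)\}$ with gains $\{K_{n}(t)\}$ and limit $\mathcal{K}=N\cdot I^{-1}(\abtheta)$, I obtain a constant $\mu>0$ with $(t+1)^{\mu}\left\|\mathbf{x}_{n}(t)-\abtheta\right\|\rightarrow 0$ a.s. for all $n$; in particular $\mathbf{x}_{n}(t)\rightarrow\abtheta$ a.s., which is the claim. Since the statement is a direct specialization of already-established machinery, there is no genuine obstacle; the only point requiring care is matching the non-uniform nature of the gain convergence in Lemma~\ref{lm:gainconv} with the correct version of the generic result, i.e., invoking Corollary~\ref{corr:genrate} rather than Theorem~\ref{th:genest} directly --- the heavy lifting having already been carried out in the proofs of Lemma~\ref{lm:gainconv} and Corollary~\ref{corr:genrate}.
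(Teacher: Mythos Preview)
Your proposal is correct and follows essentially the same route as the paper: identify the optimal estimate recursion~\eqref{opt:1} as an instance of the generic estimator~\eqref{th:genest1}, use Lemma~\ref{lm:gainconv} to verify the gain convergence with $\mathcal{K}=N\cdot I^{-1}(\abtheta)$, and then invoke the generic consistency/rate result. The paper's proof cites Theorem~\ref{th:genrate} directly, whereas you correctly observe that the gain convergence furnished by Lemma~\ref{lm:gainconv} is only pathwise a.s.\ (not uniform over sample paths) and therefore point to Corollary~\ref{corr:genrate}; this is a minor sharpening of the citation but not a substantive change in approach.
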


\begin{proof} Note that, by Lemma~\ref{lm:gainconv}, there exists $\tau^{\prime}>0$ such that $(t+1)^{\tau^{\prime}}\|K_{n}(t)-N.I^{-1}(\abtheta)\|\rightarrow 0$ as $\tri$ a.s. Thus, the sequences $\{\mathbf{x}_{n}(t)\}$ fall under the purview of Theorem~\ref{th:genrate} (with $\mathcal{K}=N.I^{-1}(\abtheta)$) and the assertion follows.
\end{proof}

\subsection{Proofs of Theorem~\ref{th:estcons} and Theorem~\ref{th:estn}}
\label{subsec:mainproofs} The key idea in proving Theorem~\ref{th:estcons} and Theorem~\ref{th:estn} consists of comparing the nonlinear estimate recursions, see~\eqref{opt:1}, to a suitably linearized recursion. To this end, we consider the following result on distributed linear stochastic recursions developed in~\cite{Kar-AdaptiveDistEst-SICON-2012} in the context of asymptotically efficient distributed parameter estimation in linear multi-agent models. The result to be stated below is somewhat less general than the development in~\cite{Kar-AdaptiveDistEst-SICON-2012}, but serves the current scenario.
\begin{theorem}[Theorem 3.2 and Theorem 3.3 in~\cite{Kar-AdaptiveDistEst-SICON-2012}]
\label{th:optlindist} For each $n$, let $\{\mathbf{v}_{n}(t)\}$ be an $\mathbb{R}^{M}$-valued $\{\mathcal{F}_{t}\}$-adapted process evolving in a distributed fashion as follows:
\begin{equation}
\label{th:optlindist1}
\mathbf{v}_{n}(t+1)=\mathbf{v}_{n}(t)-\beta_{t}\sum_{l\in\Omega_{n}(t)}\left(\mathbf{v}_{n}(t)-\mathbf{v}_{l}(t)\right)+\alpha_{t}D_{n}(t)\left(B_{n}\left(\abtheta-\mathbf{v}_{n}(t)\right)+\bzeta_{n}(t)\right),
\end{equation}
where $B_{n}$, for each $n$, is an $M_{n}\times M$ matrix \textup{(}for some positive integer $M_{n}$\textup{)} such that
\begin{itemize}
\item[(1)] the matrix $A=\sum_{n=1}^{N}B^{\top}_{n}B_{n}$ is positive definite\textup{;}
\item[(2)] for each $n$, the $M\times M_{n}$ matrix valued process $\{D_{n}(t)\}$ is $\{\mathcal{F}_{t}\}$-adapted with $D_{n}(t)\rightarrow N.A^{-1}B^{\top}_{n}$ as $\tri$ a.s.\textup{;}
\item[(3)] for each $n$, the $\{\mathcal{F}_{t+1}$\}-adapted sequence is such that $\{\bzeta_{n}(t)\}$ is independent of $\mathcal{F}_{t}$ for all $t$, the sequence $\{\bzeta_{n}(t)\}$ is i.i.d. with zero mean and covariance $I_{M_{n}}$, and there exists a constant $\Vap>0$ such that $\mathbb{E}[\|\bzeta_{n}(t)\|^{2+\Vap}]<\infty$\textup{;}
\item[(4)] the Laplacian sequence $\{L_{t}\}$ representing the random communication neighborhoods $\Omega_{n}(t)$, $n=1,\cdots,N$, satisfies Assumption~\ref{ass:conn}\textup{;} and
\item[(5)] the weight sequences $\{\alpha_{t}\}$ and $\{\beta_{t}\}$ satisfy Assumption~\ref{ass:weight}.
\end{itemize}
Then the following hold for the processes $\{\mathbf{v}_{n}(t)\}$, $n=1,\cdots,N$\textup{:}
\begin{itemize}
\item[(1)] for each $n$ and $\tau\in [0,1/2)$, we have
\begin{equation}
\label{th:optlindist2}\mathbb{P}\left(\lim_{\tri}(t+1)^{\tau}\left\|\mathbf{v}_{n}(t)-\abtheta\right\|=0\right)=1;
\end{equation}
\item[(2)] for each $n$, the sequence $\{\mathbf{v}_{n}(t)\}$, viewed as an estimate of $\abtheta$, is asymptotically normal with asymptotic covariance $A^{-1}$, i.e.\textup{,}
\begin{equation}
\label{th:optlindist3}
\sqrt{t+1}\left(\mathbf{v}_{n}(t)-\abtheta\right)~\Longrightarrow~\mathcal{N}\left(\mathbf{0},A^{-1}\right).
\end{equation}
\end{itemize}
\end{theorem}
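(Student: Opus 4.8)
The plan is to exploit the strictly ordered mixed time scales: with $\beta_t=b(t+1)^{-\tau_2}$, $\tau_2<1/2$, and $\alpha_t=(t+1)^{-1}$, the agreement potential acts on a faster time scale than the innovation potential, so the inter-agent disagreement is suppressed fast enough to be asymptotically irrelevant at the $\sqrt{t}$-scale, while the network-averaged iterate behaves like a centralized Robbins--Monro recursion tuned to achieve asymptotic covariance $A^{-1}$. (The complete argument is in~\cite{Kar-AdaptiveDistEst-SICON-2012}.)

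First I would stack the iterates, $\mathbf{v}_t=\vecc(\mathbf{v}_n(t))$ and $\mathbf{e}_t=\mathbf{v}_t-\mathbf{1}_N\otimes\abtheta$, and rewrite~\eqref{th:optlindist1} as
\begin{equation}
\mathbf{e}_{t+1}=\left(I_{NM}-\beta_t(L_t\otimes I_M)-\alpha_t\mathcal{D}_t\mathcal{B}\right)\mathbf{e}_t+\alpha_t\mathcal{D}_t\bzeta_t,
\end{equation}
with $\mathcal{D}_t=\ndiag(D_n(t))$, $\mathcal{B}=\ndiag(B_n)$ and $\bzeta_t=\vecc(\bzeta_n(t))$. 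Boundedness of $\{\mathbf{e}_t\}$ follows from a non-negative supermartingale argument as in Lemma~\ref{lm:bg} (considerably simpler here because the recursion is linear), and a Lyapunov argument in the spirit of Proposition~\ref{prop:Lg} and Theorem~\ref{th:genrate} then yields the pathwise rate $(t+1)^\tau\|\mathbf{e}_t\|\to 0$ a.s. for every $\tau\in[0,1/2)$; this is assertion~(1).

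Next I would sharpen the control of the disagreement $(\mathbf{e}_t)_{\PC}$. Projecting onto $\PC$ and using $\lambda_2(\OL)>0$, a conditional-second-moment recursion shows $(\mathbf{e}_t)_{\PC}$ is contracted at instantaneous rate $\propto\beta_t$ while being forced at rate $\propto\alpha_t$; since $\sum_{s=0}^{t}\beta_s$ grows like $t^{1-\tau_2}$ (hence faster than $\log t$) and $\alpha_t/\beta_t=b^{-1}(t+1)^{\tau_2-1}$ with $\tau_2-1<-1/2$, one obtains $(t+1)^{1/2+\delta}\|(\mathbf{e}_t)_{\PC}\|\to 0$ a.s. for some $\delta>0$. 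Meanwhile, averaging~\eqref{th:optlindist1} over agents annihilates the Laplacian term ($\mathbf{1}_N^{\top}L_t=\mathbf{0}$), so with $\bar{\mathbf{e}}_t=N^{-1}\sum_n(\mathbf{v}_n(t)-\abtheta)$,
\begin{equation}
\bar{\mathbf{e}}_{t+1}=\bar{\mathbf{e}}_t-\frac{\alpha_t}{N}\sum_{n=1}^{N}D_n(t)B_n\mathbf{e}_n(t)+\frac{\alpha_t}{N}\sum_{n=1}^{N}D_n(t)\bzeta_n(t).
\end{equation}
Writing $\mathbf{e}_n(t)=\bar{\mathbf{e}}_t+$ (the $n$-th block of $(\mathbf{e}_t)_{\PC}$), and using $D_n(t)\to NA^{-1}B_n^{\top}$ together with $\sum_n B_n^{\top}B_n=A$, the drift takes the form $\alpha_t(I_M+\Delta_t)\bar{\mathbf{e}}_t$, where $\Delta_t\to\mathbf{0}$ once one localizes, via Egorov's theorem, on an event of probability $\geq 1-\eta$ on which $D_n(t)\to NA^{-1}B_n^{\top}$ uniformly, plus an additive remainder $\alpha_t\mathbf{r}_t$ with $\|\mathbf{r}_t\|=O(t^{-1/2-\delta})$ produced by the fast-vanishing disagreement. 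The stochastic term is a martingale-difference array whose conditional covariance is $N^{-2}\sum_n D_n(t)D_n(t)^{\top}\to A^{-1}\!\left(\sum_n B_n^{\top}B_n\right)\!A^{-1}=A^{-1}$, with bounded $(2+\Vap)$-moments by hypothesis~(3). Hence $\bar{\mathbf{e}}_t$ obeys the Robbins--Monro recursion $\bar{\mathbf{e}}_{t+1}=\bar{\mathbf{e}}_t-\alpha_t(I_M+\Delta_t)\bar{\mathbf{e}}_t+\alpha_t\mathbf{w}_t+\alpha_t\mathbf{r}_t$ with $\alpha_t=(t+1)^{-1}$, and the classical stochastic-approximation CLT (cf.~\cite{Nevelson,Fabian1978efficient}) applies: the stability condition holds since $I_M-\tfrac12 I=\tfrac12 I_M$ is positive definite, and the asymptotic covariance $V$ is pinned down by the Lyapunov equation $\tfrac12 V+\tfrac12 V=A^{-1}$, i.e. $V=A^{-1}$. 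This gives $\sqrt{t+1}\,\bar{\mathbf{e}}_t\Rightarrow\mathcal{N}(\mathbf{0},A^{-1})$ on the localizing event, and letting $\eta\downarrow 0$ removes the localization. Since $\mathbf{v}_n(t)-\abtheta=\bar{\mathbf{e}}_t+$ (the $n$-th block of $(\mathbf{e}_t)_{\PC}$), with $\sqrt{t+1}$ times the latter tending to $\mathbf{0}$ a.s., Slutsky's theorem yields assertion~(2).

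The main obstacle is the disagreement bound at a rate \emph{strictly better} than $t^{-1/2}$: the Laplacians $L_t$ are random and time-varying, the contraction rate $\beta_t$ itself vanishes, so the naive geometric-contraction estimate is insufficient, and one must combine the averaged contraction $\exp(-c\sum_s\beta_s)$ with a careful accounting of the stochastic forcing --- all while the gain processes $\mathcal{D}_t$ converge only almost surely, which is what forces the Egorov (stopping-time) localization. A secondary difficulty is verifying the Lindeberg-type conditions for the CLT and certifying that the coupling remainder $\mathbf{r}_t$ vanishes fast enough (summably after rescaling by $\sqrt{t}$) so that it does not perturb the limiting Gaussian law.
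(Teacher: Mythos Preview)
The paper does not contain a proof of this statement: Theorem~\ref{th:optlindist} is quoted verbatim as Theorems~3.2 and~3.3 of~\cite{Kar-AdaptiveDistEst-SICON-2012} and is invoked as a black box (see the sentence preceding its statement). So there is no ``paper's own proof'' to compare against here; the paper merely \emph{uses} the result to analyze the linearized auxiliary process $\{\bv_{n}(t)\}$ in Corollary~\ref{corr:linz}.

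That said, your sketch is a faithful outline of the argument one finds in~\cite{Kar-AdaptiveDistEst-SICON-2012}, and in fact parallels closely the machinery developed \emph{in this paper} for the nonlinear estimator (Lemma~\ref{lm:bg}, Proposition~\ref{prop:Lg}, Lemma~\ref{lm:consrate}, Theorem~\ref{th:genrate}): boundedness via a non-negative supermartingale, a Lyapunov construction for the pathwise rate, a disagreement estimate driven by Lemma~\ref{lm:conn}/Lemma~\ref{lm:mean-conv} showing $(t+1)^{1/2+\delta}\|(\mathbf{e}_{t})_{\PC}\|\rightarrow 0$ (this is where the hypothesis $\tau_{2}<1/2$ enters), reduction of the network average to a centralized Robbins--Monro recursion with drift matrix tending to $I_{M}$, and the Fabian/Nevelson CLT with the Lyapunov equation yielding $A^{-1}$. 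Your identification of the two technical obstacles---the strictly-better-than-$t^{-1/2}$ disagreement rate under random time-varying Laplacians with vanishing $\beta_{t}$, and the Egorov localization needed because $D_{n}(t)\rightarrow NA^{-1}B_{n}^{\top}$ only almost surely---is exactly right; these are precisely the points that require the specialized Lemmas~\ref{lm:mean-conv}--\ref{lm:conn} imported from~\cite{Kar-AdaptiveDistEst-SICON-2012}.
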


The following corollary to Theorem~\ref{th:optlindist} will be used in the sequel.
\begin{corollary}
\label{corr:linz} For each $n$, let $\{\bv_{n}(t)\}$ be the $\{\mathcal{F}_{t}\}$-adapted $\mathbb{R}^{M}$-valued process evolving in a distributed fashion as
\begin{equation}
\label{corr:linz1}\bv_{n}(t+1)=\bv_{n}(t)-\beta_{t}\sum_{l\in\Omega_{n}(t)}\left(\bv_{n}(t)-\bv_{l}(t)\right)+\alpha_{t}K_{n}(t)\left(I_{n}(\abtheta)\left(\abtheta-\bv_{n}(t)\right)+\mathbf{w}_{n}(t)\right),
\end{equation}
where
\begin{itemize}
\item[(1)] for each $n$, $\{\mathbf{w}_{n}(t)\}$ is the $\{\mathcal{F}_{t+1}\}$-adapted sequence given by $\mathbf{w}_{n}(t)=g_{n}(\mathbf{y}_{n}(t))-h_{n}(\abtheta)$ for all $t$\textup{;}
\item[(2)] for each $n$, $\{K_{n}(t)\}$ denotes the $\{\mathcal{F}_{t}\}$-adapted innovation gain sequence defined as in~\eqref{gain1}\textup{;}
\item[(3)] the Laplacian sequence $\{L_{t}\}$ representing the random communication neighborhoods $\Omega_{n}(t)$, $n=1,\cdots,N$, satisfies Assumption~\ref{ass:conn}\textup{;} and
\item[(4)] the weight sequences $\{\alpha_{t}\}$ and $\{\beta_{t}\}$ satisfy Assumption~\ref{ass:weight}.
\end{itemize}
Then the following hold for the processes $\{\bv_{n}(t)\}$, $n=1,\cdots,N$\textup{:}
\begin{itemize}
\item[(1)] for each $n$ and $\tau\in [0,1/2)$, we have
\begin{equation}
\label{corr:linz3}\Past\left(\lim_{\tri}(t+1)^{\tau}\left\|\bv_{n}(t)-\abtheta\right\|=0\right)=1;
\end{equation}
\item[(2)] for each $n$, the sequence $\{\bv_{n}(t)\}$, viewed as an estimate of $\abtheta$, is asymptotically efficient as per Definition~\ref{def:asyeff}, i.e.\textup{,}
\begin{equation}
\label{corr:linz4}
\sqrt{t+1}\left(\bv_{n}(t)-\abtheta\right)~\Longrightarrow~\mathcal{N}\left(\mathbf{0},I^{-1}(\abtheta)\right).
\end{equation}
\end{itemize}
\end{corollary}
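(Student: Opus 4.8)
The plan is to observe that \eqref{corr:linz1} is, after identifying the right matrices, a verbatim instance of the linear distributed recursion \eqref{th:optlindist1} of Theorem~\ref{th:optlindist}, and then to verify its five hypotheses. The single non-routine choice is the selection of the ``observation matrices'' $B_n$: one wants $\sum_n B_n^{\top}B_n$ to be the centralized Fisher information $I(\abtheta)$, the deterministic part of the innovation to linearize as $B_n^{\top}B_n(\abtheta-\bv_n(t))$, and the innovation noise to be normalized to identity covariance, all at once.

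First I would fix, for each $n$, a square-root factorization $I_n(\abtheta)=B_n^{\top}B_n$ with $B_n\in\mathbb{R}^{M_n\times M}$; the symmetric positive semidefinite square root $B_n=I_n^{1/2}(\abtheta)$ (so $M_n=M$) works, and if one prefers $B_n$ of full row rank one takes $M_n=\operatorname{rank}I_n(\abtheta)$. Then $A:=\sum_{n=1}^{N}B_n^{\top}B_n=\sum_{n=1}^{N}I_n(\abtheta)=I(\abtheta)$ is positive definite by Proposition~\ref{prop:inf} (see \eqref{prop:inf3}), which is hypothesis (1). Next set $D_n(t):=K_n(t)B_n^{\top}$; this is $\{\mathcal{F}_t\}$-adapted since $K_n(t)$ is and $B_n$ is deterministic, and Lemma~\ref{lm:gainconv} gives $K_n(t)\to N\,I^{-1}(\abtheta)$ a.s., hence $D_n(t)\to N\,I^{-1}(\abtheta)B_n^{\top}=N\,A^{-1}B_n^{\top}$ a.s., which is hypothesis (2). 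Finally define the innovation noise $\bzeta_n(t)$ by $B_n^{\top}\bzeta_n(t)=\mathbf{w}_n(t):=g_n(\mathbf{y}_n(t))-h_n(\abtheta)$; this is consistent because, under $\Past$, the vector $\mathbf{w}_n(t)$ has zero mean (by \eqref{prop:analytic1}, $\East[g_n(\mathbf{y}_n(t))]=h_n(\abtheta)$) and covariance $\nabla_{\btheta}h_n(\abtheta)=I_n(\abtheta)=B_n^{\top}B_n$ by the standard exponential-family variance identity, so $\mathbf{w}_n(t)$ lies a.s.\ in the range of $B_n^{\top}$ and one may take $\bzeta_n(t)=(B_nB_n^{\top})^{-1}B_n\mathbf{w}_n(t)$ (equivalently $\bzeta_n(t)=I_n^{-1/2}(\abtheta)\mathbf{w}_n(t)$ when $I_n(\abtheta)\in\mathbb{S}_{++}^{M}$); this $\bzeta_n(t)$ has zero mean and covariance $I_{M_n}$, is i.i.d.\ in $t$ and (by Assumption~\ref{ass:sensmod}) independent across $n$, is $\{\mathcal{F}_{t+1}\}$-adapted and independent of $\mathcal{F}_t$, and has moments of all orders because $g_n$ does under the exponential statistics; this is hypothesis (3). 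Hypotheses (4) and (5) hold by assumption.

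With these identifications the innovation term of \eqref{corr:linz1} is $\alpha_tK_n(t)\left(I_n(\abtheta)(\abtheta-\bv_n(t))+\mathbf{w}_n(t)\right)=\alpha_tK_n(t)B_n^{\top}\left(B_n(\abtheta-\bv_n(t))+\bzeta_n(t)\right)=\alpha_tD_n(t)\left(B_n(\abtheta-\bv_n(t))+\bzeta_n(t)\right)$, so \eqref{corr:linz1} coincides with \eqref{th:optlindist1}. Theorem~\ref{th:optlindist} then yields its first conclusion, i.e.\ $(t+1)^{\tau}\|\bv_n(t)-\abtheta\|\to 0$ a.s.\ for every $\tau\in[0,1/2)$, which is \eqref{corr:linz3}; and its second conclusion, $\sqrt{t+1}(\bv_n(t)-\abtheta)\Longrightarrow\mathcal{N}(\mathbf{0},A^{-1})$, which, since $A^{-1}=I^{-1}(\abtheta)$, is precisely \eqref{corr:linz4} and hence asymptotic efficiency in the sense of Definition~\ref{def:asyeff}.

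The step I expect to need the most care is the construction of $\bzeta_n(t)$ in the rank-deficient case: one must use that $\mathbf{w}_n(t)$ almost surely takes values in $\operatorname{range}I_n(\abtheta)$ (a consequence of its covariance being $I_n(\abtheta)$) to produce a reduced-dimension noise with \emph{exactly} identity covariance, and it is the choice of $B_n$ as the true square root of $I_n(\abtheta)$ --- rather than $I_n(\abtheta)$ itself --- that simultaneously delivers $A=I(\abtheta)$, the correct linear drift, and the noise normalization.
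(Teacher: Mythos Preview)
Your proposal is correct and follows essentially the same route as the paper's proof: factor $I_{n}(\abtheta)=B_{n}^{\top}B_{n}$, set $D_{n}(t)=K_{n}(t)B_{n}^{\top}$, construct $\bzeta_{n}(t)$ with identity covariance via $B_{n}^{\top}\bzeta_{n}(t)=\mathbf{w}_{n}(t)$, and invoke Theorem~\ref{th:optlindist} with $A=I(\abtheta)$. You in fact supply more detail than the paper on the rank-deficient case, correctly arguing that $\mathbf{w}_{n}(t)\in\operatorname{range}I_{n}(\abtheta)$ a.s.\ because its covariance is $I_{n}(\abtheta)$.
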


\begin{proof} Note that, by Proposition~\ref{prop:inf}, for each $n$ the local Fisher information matrix $I_{n}(\abtheta)$ is positive semidefinite; hence, there exists (for example, by a Cholesky factorization) a positive integer $M_{n}$ and an $M_{n}\times M$ matrix $B_{n}$ such that $I_{n}(\abtheta)=B_{n}^{\top}B_{n}$.
By Proposition~\ref{prop:analytic}, for each $n$, the sequence $\{\mathbf{w}_{n}(t)\}$ possesses moments of all orders, is zero-mean with covariance $I_{n}(\btheta)$. Since $I_{n}(\abtheta)=B_{n}^{\top}B_{n}$, there exists another $\{\mathcal{F}_{t+1}$ adapted sequence $\{\bzeta_{n}(t)\}$ (not necessarily unique depending on the rank of the matrix $B_{n}$) satisfying condition (3) in the hypothesis of Theorem~\ref{th:optlindist}, such that $B_{n}^{\top}\bzeta_{n}(t)=\mathbf{w}_{n}(t)$ for all $t$ a.s.

Also, for each $n$, denote by $\{D_{n}(t)\}$ the $M\times M_{n}$ matrix-valued $\{\mathcal{F}_{t}\}$-adapted process such that $D_{n}(t)=K_{n}(t)B_{n}^{\top}$ for all $t$; since, by Lemma~\ref{lm:gainconv}, $K_{n}(t)\rightarrow N.I^{-1}(\abtheta)$ as $\tri$ a.s., we have that $D_{n}(t)\rightarrow N.I^{-1}(\abtheta)B_{n}^{\top}$ as $\tri$ a.s.

It is now clear, that the evolution of the sequences $\{\bv_{n}(t)\}$ may be rewritten as follows in terms of the newly introduced variables:
\begin{equation}
\label{corr:linz5} \bv_{n}(t+1)=\bv_{n}(t)-\beta_{t}\sum_{l\in\Omega_{n}(t)}\left(\bv_{n}(t)-\bv_{l}(t)\right)+\alpha_{t}D_{n}(t)\left(B_{n}\left(\abtheta-\bv_{n}(t)\right)+\bzeta_{n}(t)\right).
\end{equation}
Finally noting that, by construction and Proposition~\ref{prop:inf},
\begin{equation}
\label{corr:linz6} I(\abtheta)=\sum_{n=1}^{N}I_{n}(\abtheta)=\sum_{n=1}^{N}B_{n}^{\top}B_{n},
\end{equation}
we conclude that the evolution in~\eqref{corr:linz5} falls under the purview of Theorem~\ref{th:optlindist} (with the identification that $A=I(\abtheta)$) and the desired assertions follow.
\end{proof}

The processes $\{\bv_{n}(t)\}$ as introduced and analyzed in Corollary~\ref{corr:linz} may, in fact, be viewed as linearizations of the nonlinear estimator dynamics, see~\eqref{opt:1}, the linearizations being performed in the vicinity of the true parameter value $\abtheta$. Clearly, in order for such linearization to provide meaningful insight into the actual nonlinear dynamics of the estimators $\{\mathbf{x}_{n}(t)\}$'s, it is necessary that the latter approach stay close to~$\abtheta$ (around which the linearization is carried out) asymptotically, which, in turn, is guaranteed by the consistency of the estimators $\{\mathbf{x}_{n}(t)\}$'s, see Corollary~\ref{corr:xconv}. The consistency allows us to obtain insight into the detailed dynamics of the estimators $\{\mathbf{x}_{n}(t)\}$'s by characterizing the pathwise deviations of the former from their linearized counterparts. These ideas are formalized below in Lemma~\ref{lm:dev} (see Appendix~\ref{sec:app2} for a proof) leading to the main results of the paper as presented in Section~\ref{subsec:mainres}.
\begin{lemma}
\label{lm:dev} For each $n$, let $\{\mathbf{x}_{n}(t)\}$ be the estimate sequence at agent $n$ as defined in~\eqref{opt:1}, and $\{\bv_{n}(t)\}$ denote the process defined in~\eqref{corr:linz1} under the hypotheses of Corollary~\ref{corr:linz}. Then, there exists a constant $\overline{\tau}>1/2$ such that
\begin{equation}
\label{lm:dev1}\Past\left(\lim_{\tri}(t+1)^{\overline{\tau}}\left\|\mathbf{x}_{n}(t)-\bv_{n}(t)\right\|=0\right)=1
\end{equation}
for all $n$.
\end{lemma}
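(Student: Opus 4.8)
The plan is to study the deviation $\mathbf{e}_n(t) = \mathbf{x}_n(t) - \bv_n(t)$ directly, to recognise it as the state of a \emph{noise-free} distributed linear recursion whose only forcing is quadratically small in $\|\mathbf{x}_n(t)-\abtheta\|$, and then to bootstrap the decay exponent. First I subtract~\eqref{corr:linz1} from~\eqref{opt:1}; since $h_n$ is infinitely differentiable (Proposition~\ref{prop:analytic}(1)) with $\nabla_{\btheta}h_n(\abtheta)=I_n(\abtheta)$ (Proposition~\ref{prop:inf}(1)), Taylor's theorem about $\abtheta$ gives $h_n(\mathbf{x}_n(t))-h_n(\abtheta)=I_n(\abtheta)\left(\mathbf{x}_n(t)-\abtheta\right)+R_n(\mathbf{x}_n(t))$ with $\|R_n(\btheta)\|\le\kappa\|\btheta-\abtheta\|^{2}$ on a fixed ball $\mathbb{B}_{r_{0}}(\abtheta)$. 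The difference of the two innovation terms then collapses to $-I_n(\abtheta)\mathbf{e}_n(t)-R_n(\mathbf{x}_n(t))$, so that
\[
\mathbf{e}_n(t+1)=\mathbf{e}_n(t)-\beta_t\sum_{l\in\Omega_n(t)}\left(\mathbf{e}_n(t)-\mathbf{e}_l(t)\right)-\alpha_t K_n(t)I_n(\abtheta)\mathbf{e}_n(t)-\alpha_t K_n(t)R_n(\mathbf{x}_n(t)).
\]
This is a consensus $+$ innovations recursion in $\mathbf{e}_n(t)$ with \emph{fixed} innovation matrices $I_n(\abtheta)$, adaptive gains $K_n(t)\to N.I^{-1}(\abtheta)$ (Lemma~\ref{lm:gainconv}), \emph{no fresh martingale-difference term}, and a single forcing term $\alpha_t K_n(t)R_n(\mathbf{x}_n(t))$ tied to the accuracy of $\mathbf{x}_n(t)$.

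Next I record an a priori rate and bound the forcing. By Theorem~\ref{th:genrate} applied to $\{\mathbf{x}_n(t)\}$ (with $\mathcal{K}=N.I^{-1}(\abtheta)$, exactly as in the proof of Corollary~\ref{corr:xconv} and using Lemma~\ref{lm:gainconv}) there is a constant $\mu_{0}>0$ with $(t+1)^{\mu_{0}}\|\mathbf{x}_n(t)-\abtheta\|\to 0$ a.s. for every $n$; in particular $\mathbf{x}_n(t)$ eventually enters $\mathbb{B}_{r_{0}}(\abtheta)$, and, since $\{K_n(t)\}$ is a.s. bounded (it converges, by Lemma~\ref{lm:gainconv}), the forcing satisfies $\left\|\alpha_t K_n(t)R_n(\mathbf{x}_n(t))\right\|=o\left((t+1)^{-1-2\mu_{0}}\right)$ a.s.

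The core step, which I expect to be the main obstacle, is to quantify the rate of $\{\mathbf{e}_n(t)\}$ by a Lyapunov argument patterned on the proof of Theorem~\ref{th:genrate}. Stacking $\mathbf{e}_t=\vecc(\mathbf{e}_n(t))$ I work with $V^{\mathbf{e}}_t=\mathbf{e}_t^{\top}\left(I_N\otimes\mathcal{K}^{-1}\right)\mathbf{e}_t$, $\mathcal{K}^{-1}=N^{-1}I(\abtheta)$, and split $\mathbf{e}_t$ along $\C$ and $\PC$: the consensus potential $\beta_t(\OL\otimes I_M)$ damps $(\mathbf{e}_t)_{\PC}$ on the faster $\beta_t$-scale (as in the disagreement estimate of Lemma~\ref{lm:consrate}), while on $\C$ the effective drift matrix is \emph{exactly} $I_M$ -- this is where the optimal-gain identity $\frac{1}{N}\sum_{n=1}^{N}\left(N.I^{-1}(\abtheta)\right)I_n(\abtheta)=I^{-1}(\abtheta)I(\abtheta)=I_M$ enters, and, matched to the weighting $\mathcal{K}^{-1}=N^{-1}I(\abtheta)$, it yields -- after the $\C/\PC$ bookkeeping mirroring~\eqref{lm:bg7} and~\eqref{th:genest15}, and up to a summable multiplicative correction -- a drift $\East\left[V^{\mathbf{e}}_{t+1}~|~\mathcal{F}_t\right]\le\left(1-c\alpha_t\right)V^{\mathbf{e}}_t+g_t$ with a \emph{conditioning-independent} constant $c$ that may be taken close to $2$ (in any case $c>1$), where $g_t=o\left((t+1)^{-1-4\mu_{0}}\right)$ after applying Young's inequality to the forcing cross-term (choosing the Young parameter so the attendant $\alpha_t\|\mathbf{e}_t\|^{2}$ residue does not erode $c$). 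Since the $\mathbf{e}$-recursion carries no martingale-noise floor, the stopping-time and supermartingale machinery of Theorem~\ref{th:genrate} applies essentially verbatim and gives $(t+1)^{\nu}V^{\mathbf{e}}_t\to 0$ a.s. for every $\nu<\min(c,4\mu_{0})$; equivalently $(t+1)^{\mu_{1}}\|\mathbf{e}_n(t)\|\to 0$ a.s. for every $n$ and every $\mu_{1}<\min(c/2,2\mu_{0})$.

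Finally I bootstrap. From $\mathbf{x}_n(t)-\abtheta=(\bv_n(t)-\abtheta)+\mathbf{e}_n(t)$ and Corollary~\ref{corr:linz}(1) ($\|\bv_n(t)-\abtheta\|=o((t+1)^{-\tau})$ for every $\tau\in[0,1/2)$), the previous step upgrades the a priori rate $\mu_{0}$ to any $\mu_{0}'<\min(1/2,2\mu_{0})$. Re-running the second and third steps with $\mu_{0}'$ in place of $\mu_{0}$ -- the linear structure of the $\mathbf{e}$-recursion and the constant $c$ being unchanged -- the admissible exponent essentially doubles at each round, so after finitely many rounds the exponent for $\{\mathbf{x}_n(t)-\abtheta\}$ exceeds $1/4$; the ensuing round then produces $(t+1)^{\overline{\tau}}\|\mathbf{e}_n(t)\|\to 0$ a.s. for some $\overline{\tau}>1/2$, which is the assertion. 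The delicate points are all in the third step: keeping the Lyapunov drift constant conditioning-free via the optimal-gain identity, and ensuring the Taylor-remainder forcing survives only as an additive term of order $(t+1)^{-1-4\mu_{0}}$ -- it is precisely the absence of martingale noise in the $\mathbf{e}$-recursion that lets $V^{\mathbf{e}}_t$ decay faster than $(t+1)^{-1}$ and thereby powers the bootstrap past the $1/2$ threshold.
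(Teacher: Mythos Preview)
Your overall strategy is sound, and the key structural observation---that the recursion for $\bu_n(t)=\mathbf{x}_n(t)-\bv_n(t)$ carries no martingale noise, so its Lyapunov can decay faster than $(t+1)^{-1}$---is exactly the point. But you are working harder than necessary, and the paper's argument differs in two ways worth noting.

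First, the paper separates the consensus and the average explicitly rather than running a single Lyapunov on $\mathbb{R}^{NM}$. An auxiliary lemma (Lemma~\ref{lm:devcons}) shows $(t+1)^{\gamma}\|\bu_n(t)-\bu_l(t)\|\to 0$ for every $\gamma\in[0,1-\tau_2)$, which already exceeds $1/2$ since $\tau_2<1/2$. After that, only the network average $\bau_t=(1/N)\sum_n\bu_n(t)$ remains, and its recursion is a first-order scalar-like inequality $\|\bau_{t+1}\|\le\|I_M-\alpha_tQ_t+\alpha_t\blambda_tI_M\|\,\|\bau_t\|+b\alpha_t(t+1)^{-\gamma_3}$ with $Q_t\to I_M$. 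This avoids the $\C/\PC$ cross-term bookkeeping you would have to carry inside a single Lyapunov, and it lets the proof finish with a pathwise application of the deterministic Proposition~\ref{prop:Fab-1} rather than a conditional supermartingale argument.

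Second, and more importantly, the paper avoids your bootstrap entirely. You bound the Taylor forcing by $\|\mathbf{x}_n(t)-\abtheta\|^2$ and invoke the possibly small a~priori rate $\mu_0$ from Theorem~\ref{th:genrate}, which then forces iterated doubling. The paper instead writes
\[
\|\mathbf{x}_n(t)-\abtheta\|^2\le 2\|\bu_n(t)\|^2+2\|\bv_n(t)-\abtheta\|^2\le 4\|\bau_t\|^2+4\|\bu_n(t)-\bau_t\|^2+2\|\bv_n(t)-\abtheta\|^2.
\]
The last term is already $o((t+1)^{-\gamma_1})$ for some $\gamma_1>1/2$ by Corollary~\ref{corr:linz}(1); the middle term is handled by the consensus lemma; and the first term, being quadratic in $\|\bau_t\|$, is absorbed back into the drift as $\blambda_t\|\bau_t\|$ with $\blambda_t\propto\|\bau_t\|\to 0$. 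One pass of Proposition~\ref{prop:Fab-1} with $c=2/3>1/2$ then gives $\overline{\tau}>1/2$ directly. In short, the paper leverages the near-$1/2$ rate of the \emph{linearized} process $\{\bv_n(t)\}$ immediately, whereas you start from the slower rate of $\{\mathbf{x}_n(t)\}$ and climb. Your route works, but the decomposition through $\bv_n(t)$ is the shortcut you are missing.
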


With the above development, we may now complete the proofs of Theorem~\ref{th:estcons} and Theorem~\ref{th:estn} as follows.

\begin{proof}[Proof of Theorem~\ref{th:estcons}] Let $\tau\in [0,1/2)$ and note that, for each $n$,
\begin{equation}
\label{th:estcons2}(t+1)^{\tau}\left\|\mathbf{x}_{n}(t)-\abtheta\right\|\leq (t+1)^{\tau}\left\|\mathbf{x}_{n}(t)-\bv_{n}(t)\right\| + (t+1)^{\tau}\left\|\bv_{n}(t)-\abtheta\right\|,
\end{equation}
where $\{\bv_{n}(t)\}$ is the (\emph{linearized}) approximation introduced and analyzed in Corollary~\ref{corr:linz}. By Lemma~\ref{lm:dev} (first assertion) and Corollary~\ref{corr:linz}, since $\tau<1/2$, we have $(t+1)^{\tau}\|\mathbf{x}_{n}(t)-\bv_{n}(t)\|\rightarrow 0$ and $(t+1)^{\tau}\|\bv_{n}(t)-\abtheta\|\rightarrow 0$ respectively as $\tri$ a.s. Hence, by~\eqref{th:estcons2} we obtain $(t+1)^{\tau}\|\mathbf{x}_{n}(t)-\abtheta\|\rightarrow 0$ as $\tri$ a.s., thus establishing Theorem~\ref{th:estcons}.
\end{proof}

\begin{proof}[Proof of Theorem~\ref{th:estn}]
Note that by Lemma~\ref{lm:dev} (first assertion), for each $n$
\begin{align}
\label{th:estn3}
\Past\left(\lim_{\tri}\left\|\sqrt{t+1}\left(\mathbf{x}_{n}(t)-\abtheta\right)-\sqrt{t+1}\left(\bv_{n}(t)-\abtheta\right)\right\|=0\right)\\=\Past\left(\lim_{\tri}\sqrt{t+1}\left\|\mathbf{x}_{n}(t)-\bv_{n}(t)\right\|=0\right)=1.
\end{align}
Hence, in particular, the sequences $\{\sqrt{t+1}(\mathbf{x}_{n}(t)-\abtheta)\}$ and $\{\sqrt{t+1}(\bv_{n}(t)-\abtheta)\}$ possess the same weak limit (if the latter exists for one of the sequences); the asymptotic normality (efficiency) in Theorem~\ref{th:estn} then follows immediately by the corresponding for the $\{\bv_{n}(t)\}$ sequence in Corollary~\ref{corr:linz} (second assertion).
\end{proof}

\section{Comparisons, Complexity and Trade-Offs}
\label{sec:disc} We summarize some of the key technical constructs in the distributed scheme~\eqref{aux:1}-\eqref{gain2}, discuss possible modifications and simplifications, and complexity-performance trade-offs. First, note that if the goal is to obtain consistent estimates only, the optimal estimate generation~\eqref{opt:1} and adaptive gain refinement~\eqref{gain1}-\eqref{gain2} steps are not required, as the auxiliary estimate sequence $\{\bx_{n}(t)\}$ is consistent for each $n$ (see Lemma~\ref{lm:auxcons}). In fact, a little more work along the lines of Theorems~\ref{th:estcons}-\ref{th:estn} will show that these auxiliary estimates are order optimal as far as consistency is concerned and asymptotically normal, i.e., for each $n$ we have
\begin{equation}
\label{disc1} \lim_{t\rightarrow\infty}(t+1)^{\tau}\|\bx_{n}(t)-\abtheta\| = 0,~~~\forall\tau\in [0,1/2),
\end{equation}
and
\begin{equation}
\label{disc2}
(t+1)^{1/2}\left(\bx_{n}(t)-\mathbf{\theta}^{\ast}\right)\Longrightarrow\mathcal{N}\left(\mathbf{0},\bV_{n}(\abtheta)\right)
\end{equation}
for some positive definite matrix $\bV_{n}(\abtheta)$ which depends on $\abtheta$ only. Clearly, we will have that $\bV_{n}(\abtheta)\succeq I^{-1}(\abtheta)$ for all $\abtheta$ and, in general, there will be instances of $\abtheta$ such that $\bV_{n}(t)\neq I^{-1}(\abtheta)$. Specifically, if the statistical model is such that the Fisher information matrix $I(\abtheta)$ is not a constant function of $\abtheta$, i.e., varies with $\abtheta$, then a constant (innovation) gain estimator, of which~\eqref{aux:1} is a particular instance, may never achieve an asymptotic covariance of $I^{-1}(\abtheta)$ for all realizations of the parameter $\abtheta$. This observation is consistent with the theory of centralized recursive estimation and may also be shown by directly evaluating the asymptotic covariances of the auxiliary estimate sequences.

We now discuss the implications of the design construction $\beta_{t}/\alpha_{t}\rightarrow\infty$ as $t\rightarrow\infty$, which makes our distributed scheme (including the auxiliary estimate generation, the optimal estimate generation and the gain update steps) mixed time-scale, i.e., the consensus potential asymptotically dominates the (local) innovation potential. Such a construction is in fact necessary to achieve asymptotic efficiency. The mixed time-scale construction is one of the key distinctive features of the distributed scheme~\eqref{aux:1}-\eqref{gain2} with respect to prior work on recursive distributed estimation, see~\cite{KarMouraRamanan-Est-2008} for example. Constant innovation gain schemes of the form~\eqref{aux:1} are single time-scale dynamics, i.e., for them\footnote{For two positive sequences $\{f_{t}\}$ and $\{g_{t}\}$, the notation $f_{t}=\Omega(g_{t})$ means that there exists positive numbers $c_{1}$ and $c_{2}$ such that $c_{1}g_{t}\leq f_{t}\leq c_{2}g_{t}$ for all $t$.} $\beta_{t}/\alpha_{t}=\Omega(1)$. Under broad conditions, we showed that these algorithms are consistent and asymptotically normal (see Proposition~16 and Theorems~10, 18 and~19 in~\cite{KarMouraRamanan-Est-2008}); however, the asymptotic covariance  obtained at each agent was shown to be a function of both the communication network topology and the parameter value $\abtheta$. In contrast, note that the asymptotic covariance attained by the distributed scheme~\eqref{aux:1} (and of course, the asymptotically efficient scheme~\eqref{opt:1}) is invariant to the network topology--this invariance is critical to obtaining asymptotic efficiency (achieved by adaptively tuning the local innovation gains as in steps~\eqref{opt:1}-\eqref{gain2}) for arbitrary communication topologies (satisfying Assumption~\ref{ass:conn}). Such asymptotic efficiency is not achievable by the single time-scale schemes in~\cite{KarMouraRamanan-Est-2008} with constant or adaptive local innovation gains. In summary, we note that both the mixed time-scale construction, i.e., the requirement that $\beta_{t}/\alpha_{t}\rightarrow\infty$ as $t\rightarrow\infty$, and the adaptive tuning of the local innovation gains (achieved in this case through the refinement steps~\eqref{opt:1}-\eqref{gain2}) are critical to obtaining asymptotically efficient estimates.

We conclude this section by briefly discussing the implementation complexity of the distributed scheme~\eqref{aux:1}-\eqref{gain2}. Note that, in comparison to the single time-scale schemes proposed in~\cite{KarMouraRamanan-Est-2008}, additional complexity is incurred in the adaptive gain refinement steps which involves computing the local Fisher matrices at each agent and the matrix-valued message passing for updating the $G_{n}(t)$'s, see~\eqref{gain2}. In many applications, an analytical form of the local Fisher matrix function is available, hence, the major complexity is in communicating the matrix-valued $G_{n}(t)$'s for the distributed update. However, the matrix-value message passing may be completely eliminated if global statistical information is available at each agent. Specifically, if each agent $n$ is aware of the Fisher matrix functions of all the other agents, the gain update steps~\eqref{gain1}-\eqref{gain2} may be reduced to completely decentralized computations of the form
\begin{equation}
\label{disc3} K_{n}(t)=\left(\frac{1}{N}\sum_{l=1}^{N}I_{l}(\bx_{n}(t))+\varphi_{t}I_{M}\right)^{-1},~~~\forall n,
\end{equation}
thus completely eliminating the need for additional message passing for updating the adaptive gain sequence. In fact, Theorems~\ref{th:estcons}-\ref{th:estn} will continue to hold if the gain update steps~\eqref{gain1}-\eqref{gain2} are replaced by~\eqref{disc3} as the assertions of Lemma~\ref{lm:gainconv} continues to hold by the consistency of the sequences $\{\bx_{n}(t)\}$ if the $\{K_{n}(t)\}$ are set according to~\eqref{disc3}. In a sense, we observe an interesting trade-off between knowledge and complexity: if the agents possess knowledge of the global statistical model, the commmunication complexity may be reduced further without any loss of performance.

\section{Conclusions}
\label{sec:conclusions}
We have developed distributed estimators of the consensus + innovations type for multi-agent scenarios with general exponential family observation statistics that yield consistent and asymptotically efficient parameter estimates at all agents. Moreover, the above estimator properties and optimality hold as long as the aggregate or global sensing model is observable and the inter-agent communication network is connected in the mean (otherwise, irrespective of the network sparsity). Along the way, we have characterized analogues of classical system and information theoretic notions such as observability to the distributed-information setting.

An important future research question arises naturally: in this paper we have assumed that the parametrization is continuous unconstrained, i.e., $\btheta$ may take values over the entire space $\mathbb{R}^{M}$. It would be of interest to extend the approach to account for constrained parametrization--the parameter $\btheta$ could belong to a restricted subset $\Theta\subset\mathbb{R}^{M}$ either because of direct physical constraints or due to constrained natural parameterizations of the local exponential families involved, i.e., the domains of definition of the functions $\lambda_{n}(\cdot)$ in~\eqref{ass:sensmod1} being strict subsets of $\mathbb{R}^{M}$. A specific instance arises with finite classification or detection (hypothesis testing) problems in which $\btheta$ may only assume a finite set of values\footnote{We are somewhat abusing the notion of estimation which, to be precise, corresponds to inferring \emph{continuous} parameters as pursued in this paper. However, by considering constrained parametrization, we are essentially expanding its usage to general inference problems including detection and classification.}. The unconstrained estimation approach~\eqref{aux:1}-\eqref{gain2} may still be applicable to a subclass of such constrained cases by considering \emph{suitable} analytical extensions of the various functions $\lambda_{n}(\cdot)$'s, $h_{n}(\cdot)$'s etc. over $\mathbb{R}^{M}$; provided such extensions exist\footnote{An idea related to such analytical extensions is that of embedding into an exponential family (see~\cite{Rukhin1994recursive} for some discussion in a related but centralized context), in which, broadly speaking, the objective is to obtain an unconstrained exponential family whose restriction to $\Theta$ coincides with the given constrained family.}, the proposed algorithm will lead to asymptotically efficient estimates at the network agents although the intermediate iterates may not belong to $\Theta$. As a familiar example where such extension may be achievable by embedding, we may envision a binary hypothesis testing problem corresponding to the presence or absence of a signal observed in additive zero-mean Gaussian noise with known variance. In cases, where such analytical extensions may not be obtained, other modifications of the proposed scheme, for example by supplementing the local estimate update processes with a projection step onto the set $\Theta$, may be helpful. In the interest of obtaining a unified distributed inference framework, it would be worthwhile to study such extensions and modifications of the proposed scheme.\\


\appendices

\section{Proofs of Results in Section~\ref{sec:genconsest}}
\label{sec:app1}

\begin{proof}[Proof of Proposition~\ref{prop:Lg}] Let $\mathbf{z}\in\Gamma_{\Vap}$ and note that by reasoning along the lines of~\eqref{lm:bg201} we obtain
\begin{equation}
\label{prop:Lg5}\left(\mathbf{z}-\abbtheta\right)^{\top}\left(\OL\otimes\mathcal{K}^{-1}\right)\left(\mathbf{z}-\abbtheta\right)=\left(\mathbf{z}_{\PC}\right)^{\top}\left(\OL\otimes\mathcal{K}^{-1}\right)\mathbf{z}_{\PC}\geq\lambda_{2}(\OL)\lambda_{1}(\mathcal{K}^{-1})\left\|\mathbf{z}_{\PC}\right\|^{2},
\end{equation}
where $\mathbf{z}_{\PC}$ denotes the projection of $\mathbf{z}$ onto the orthogonal complement of the consensus subspace (see Definition~\ref{def:consspace}).
We thus obtain
\begin{align}
\label{prop:Lg6}\mathcal{H}_{t}(\mathbf{z})\geq\frac{b_{\beta}\beta_{t}}{\alpha_{t}}\lambda_{2}(\OL)\lambda_{1}(\mathcal{K}^{-1})\left\|\mathbf{z}_{\PC}\right\|^{2}+\left(\mathbf{z}-\abbtheta\right)^{\top}\left(\bh(\mathbf{z})-\bh(\abbtheta)\right)\\
\geq \frac{b_{\beta}\beta_{t}}{\alpha_{t}}\lambda_{2}(\OL)\lambda_{1}(\mathcal{K}^{-1})\left\|\mathbf{z}_{\PC}\right\|^{2} + \left(\mathbf{z}_{\C}-\abbtheta\right)^{\top}\left(\bh(\mathbf{z}_{\C})-\bh(\abbtheta)\right)\\+\left(\mathbf{z}_{\PC}\right)^{\top}\left(\bh(\mathbf{z})-\bh(\abbtheta)\right)+\left(\mathbf{z}_{\C}-\abbtheta\right)^{\top}\left(\bh(\mathbf{z})-\bh(\mathbf{z}_{\C})\right).
\end{align}
In order to bound the last two terms in the above inequality, note that, for $\mathbf{z}\in\Gamma_{\Vap}$, by invoking Assumption~\ref{ass:lingrowth} we obtain
\begin{equation}
\label{prop:Lg7}\left|\left(\mathbf{z}_{\PC}\right)^{\top}\left(\bh(\mathbf{z})-\bh(\abbtheta)\right)\right|\leq c_{1}\left\|\mathbf{z}_{\PC}\right\|\left(1+\left\|\mathbf{z}-\abbtheta\right\|\right)\leq c_{1}\left(1/\Vap + 1\right)\left\|\mathbf{z}_{\PC}\right\|,
\end{equation}
where $c_{1}$ is a positive constant. Also, by Proposition~\ref{prop:analytic}, the functions $h_{n}(\cdot)$ are infinitely continuously differentiable and hence locally Lipschitz; in particular, noting that the set $\Gamma_{\Vap}^{\prime}$
\begin{equation}
\label{prop:Lg8}\Gamma^{\prime}_{\Vap}=\left\{\mathbf{z}\in\mathbb{R}^{NM}~:~\left\|\mathbf{z}-\abbtheta\right\|\leq 1/\Vap\right\}
\end{equation}
is compact, there exists a constant $\ell_{\Gamma_{\Vap}}>0$, such that,
\begin{equation}
\label{prop:Lg9}\left\|\bh(\mathbf{z})-\bh(\mathbf{z}^{\prime})\right\|\leq\ell_{\Gamma_{\Vap}}\left\|\mathbf{z}-\mathbf{z}^{\prime}\right\|,~~~\forall\mathbf{z},\mathbf{z}^{\prime}\in\Gamma_{\Vap}^{\prime}.
\end{equation}
By observing that for $\mathbf{z}\in\Gamma_{\Vap}\subset\Gamma_{\Vap}^{\prime}$
\begin{equation}
\label{prop:Lg10}\left\|\mathbf{z}_{\C}-\abbtheta\right\|=\left\|\mathbf{z}-\abbtheta\right\|-\left\|\mathbf{z}_{\PC}\right\|\leq \left\|\mathbf{z}-\abbtheta\right\|\leq 1/\Vap,
\end{equation}
we obtain $\mathbf{z}_{\C}\in\Gamma^{\prime}_{\Vap}$; hence, by~\eqref{prop:Lg9}, we may conclude that
\begin{equation}
\label{prop:Lg11}\left\|\bh(\mathbf{z})-\bh(\mathbf{z}_{\C})\right\|\leq\ell_{\Gamma^{\prime}_{\Vap}}\left\|\mathbf{z}-\mathbf{z}_{\C}\right\|=\ell_{\Gamma^{\prime}_{\Vap}}\left\|\mathbf{z}_{\PC}\right\|
\end{equation}
for all $\mathbf{z}\in\Gamma_{\Vap}$. Thus, for $\mathbf{z}\in\Gamma_{\Vap}$, we have
\begin{equation}
\label{prop:Lg12}\left|\left(\mathbf{z}_{\C}-\abbtheta\right)^{\top}\left(\bh(\mathbf{z})-\bh(\mathbf{z}_{\C})\right)\right|\leq\ell_{\Gamma^{\prime}_{\Vap}}\left\|\mathbf{z}_{\PC}\right\|\left\|\mathbf{z}-\abbtheta\right\|\leq\left(\ell_{\Gamma^{\prime}_{\Vap}}/\Vap\right)\left\|\mathbf{z}_{\PC}\right\|.
\end{equation}
Combining~\eqref{prop:Lg5}-\eqref{prop:Lg7} and~\eqref{prop:Lg12} we then obtain
\begin{align}
\label{prop:Lg13}\mathcal{H}_{t}(\mathbf{z})\geq\left(\frac{b_{\beta}\beta_{t}}{\alpha_{t}}\lambda_{2}(\OL)\lambda_{1}(\mathcal{K}^{-1})\left\|\mathbf{z}_{\PC}\right\|-c_{1}\left(\frac{1}{\Vap}+1\right)-\frac{\ell_{\Gamma^{\prime}_{\Vap}}}{\Vap}\right)\left\|\mathbf{z}_{\PC}\right\|\\
+\left(\mathbf{z}_{\C}-\abbtheta\right)^{\top}\left(\bh(\mathbf{z}_{\C})-\bh(\abbtheta)\right)
\end{align}
for all $\mathbf{z}\in\Gamma_{\Vap}$. By invoking standard properties of quadratic minimization, we note that there exist positive constants $\bar{c}_{\Vap}$, $c_{3}(\Vap)$ and $c_{4}(\Vap)$ such that for all $t\geq 0$
\begin{equation}
\label{prop:Lg14}\left(\frac{b_{\beta}\beta_{t}}{\alpha_{t}}\lambda_{2}(\OL)\lambda_{1}(\mathcal{K}^{-1})\left\|\mathbf{z}_{\PC}\right\|-c_{1}\left(\frac{1}{\Vap}+1\right)-\frac{\ell_{\Gamma^{\prime}_{\Vap}}}{\Vap}\right)\left\|\mathbf{z}_{\PC}\right\|> \bar{c}_{\Vap}
\end{equation}
for all $\mathbf{z}$ with $\left\|\mathbf{z}_{\PC}\right\|> c_{3}(\Vap)\left(\alpha_{t}/\beta_{t}\right)$, and
\begin{equation}
\label{prop:Lg15}\left(\frac{b_{\beta}\beta_{t}}{\alpha_{t}}\lambda_{2}(\OL)\lambda_{1}(\mathcal{K}^{-1})\left\|\mathbf{z}_{\PC}\right\|-c_{1}\left(\frac{1}{\Vap}+1\right)-\frac{\ell_{\Gamma^{\prime}_{\Vap}}}{\Vap}\right)\left\|\mathbf{z}_{\PC}\right\|\geq
-c_{4}(\Vap)\left(\alpha_{t}/\beta_{t}\right)
\end{equation}
for all $\mathbf{z}$, in particular, $\mathbf{z}\in\Gamma_{\Vap}$.

Now, note that, by Proposition~\ref{prop:analytic} (third assertion), for all $\mathbf{z}\in\mathbb{R}^{NM}$ with $\mathbf{z}_{\C}\neq\abbtheta$ we have
\begin{equation}
\label{prop:Lg16}\left(\mathbf{z}_{\C}-\abbtheta\right)^{\top}\left(\bh(\mathbf{z}_{\C})-\bh(\abbtheta)\right)=\sum_{n=1}^{N}\left(\mathbf{z}_{\C}^{a}-\abtheta\right)^{\top}\left(h_{n}(\mathbf{z}_{\C}^{a})-h_{n}(\abtheta)\right)>0
\end{equation}
(see also Definition~\ref{def:consspace}). Let us choose $\Vap^{\prime}$ such that $\Vap^{\prime}\in (0,\Vap)$; noting that the functions $h_{n}(\cdot)$ are continuous and the set $\Phi_{\Vap,\Vap^{\prime}}$
\begin{equation}
\label{prop:Lg17}\Phi_{\Vap,\Vap^{\prime}}=\Gamma_{\Vap}\bigcap\left\{\mathbf{z}\in\mathbb{R}^{NM}~:~\left\|\mathbf{z}_{\C}-\abbtheta\right\|\geq\Vap^{\prime}\right\}
\end{equation}
is compact, we conclude that there exists $\delta_{\Vap}>0$ such that
\begin{equation}
\label{prop:Lg18}\inf_{\mathbf{z}\in\Phi_{\Vap,\Vap^{\prime}}}\left(\mathbf{z}_{\C}-\abbtheta\right)^{\top}\left(\bh(\mathbf{z}_{\C})-\bh(\abbtheta)\right)>\delta.
\end{equation}
Further, since $\alpha_{t}/\beta_{t}\rightarrow 0$ as $t\rightarrow\infty$ (by hypothesis), there exist $t_{\Vap}$ large enough and a constant $\bar{\delta}_{\Vap}>0$ such that
\begin{equation}
\label{prop:Lg19}
\Vap^{\prime}<\Vap-c_{3}(\Vap)\left(\alpha_{t}/\beta_{t}\right)~~~\mbox{and}~~~c_{4}(\Vap)\left(\alpha_{t}/\beta_{t}\right)<\delta_{\Vap}-\bar{\delta}_{\Vap}
\end{equation}
for all $t\geq t_{\Vap}$.

We now show that there exists $\delta_{\Vap}^{\prime}>0$ (independent of $t$) such that
\begin{equation}
\label{prop:Lg20}\inf_{\mathbf{z}\in\Gamma_{\Vap}}\mathcal{H}_{t}(\mathbf{z})>\delta^{\prime}_{\Vap}
\end{equation}
for all $t\geq t_{\Vap}$. To this end, for $t\geq t_{\Vap}$, let $\mathbf{z}\in\Gamma_{\Vap}$ and consider the two cases as to whether $\left\|\mathbf{z}_{\PC}\right\|> c_{3}(\Vap)\left(\alpha_{t}/\beta_{t}\right)$ or not. Noting that by Proposition~\ref{prop:analytic}
\begin{equation}
\label{prop:Lg21}\left(\mathbf{z}_{\C}-\abbtheta\right)^{\top}\left(\bh(\mathbf{z}_{\C})-\bh(\abbtheta)\right)\geq 0,~~~\forall\mathbf{z}\in\mathbb{R}^{NM},
\end{equation}
we have by~\eqref{prop:Lg13}-\eqref{prop:Lg14} that
\begin{equation}
\label{prop:Lg22}
\mathcal{H}_{t}(\mathbf{z})>\bar{c}_{\Vap}
\end{equation}
for all $\mathbf{z}\in\Gamma_{\Vap}$ with $\left\|\mathbf{z}_{\PC}\right\|> c_{3}(\Vap)\left(\alpha_{t}/\beta_{t}\right)$. Now consider the other case, i.e., let $\mathbf{z}\in\Gamma_{\Vap}$ with $\left\|\mathbf{z}_{\PC}\right\|\leq c_{3}(\Vap)\left(\alpha_{t}/\beta_{t}\right)$; note that, since $t\geq t_{\Vap}$, we have  for such $\mathbf{z}$ by~\eqref{prop:Lg19}
\begin{equation}
\label{prop:Lg23}\left\|\mathbf{z}_{\C}-\abbtheta\right\|=\left\|\mathbf{z}-\abbtheta\right\|-\left\|\mathbf{z}_{\PC}\right\|\geq\Vap-c_{3}(\Vap)\left(\alpha_{t}/\beta_{t}\right)\Vap^{\prime}.
\end{equation}
Hence, necessarily $\mathbf{z}\in\Phi_{\Vap,\Vap^{\prime}}$ and we have by~\eqref{prop:Lg13},\eqref{prop:Lg15}, and~\eqref{prop:Lg18}-\eqref{prop:Lg19}
\begin{equation}
\label{prop:Lg24}\mathcal{H}_{t}(\mathbf{z})>\delta_{\Vap}-c_{4}(\Vap)\left(\alpha_{t}/\beta_{t}\right)>\bar{\delta}_{\Vap}.
\end{equation}
From~\eqref{prop:Lg22} and~\eqref{prop:Lg24} we then obtain for all $\mathbf{z}\in\Gamma_{\Vap}$ and $t\geq t_{\Vap}$
\begin{equation}
\label{prop:Lg25}
\mathcal{H}_{t}(\mathbf{z})>\delta_{\Vap}^{\prime}>0,
\end{equation}
where $\delta_{\Vap}^{\prime}=\bar{c}_{\Vap}\wedge\bar{\delta}_{\Vap}$, thus establishing the assertion in~\eqref{prop:Lg20}.

Finally, let $\bc_{\Vap}=\Vap^{2}\bar{\delta}_{\Vap}$, and note that the desired claim follows by
\begin{equation}
\label{prop:Lg26}\mathcal{H}_{t}(\mathbf{z})>\delta_{\Vap}^{\prime}\geq\delta_{\Vap}^{\prime}\left(\Vap^{2}\left\|\mathbf{z}-\abbtheta\right\|^{2}\right)=\bc_{\Vap}\left\|\mathbf{z}-\abbtheta\right\|^{2}
\end{equation}
for all $\mathbf{z}\in\Gamma_{\Vap}$ and $t\geq t_{\Vap}$, where we used that fact that $\Vap^{2}\left\|\mathbf{z}-\abbtheta\right\|^{2}\leq 1$ on $\Gamma_{\Vap}$.
\end{proof}

{\bf Proof of Lemma~\ref{lm:consrate}}: Before proceeding to the proof of Lemma~\ref{lm:consrate}, we state the following approximation results obtained in~\cite{Kar-AdaptiveDistEst-SICON-2012} on convergence estimates of stochastic recursions (Lemma~\ref{lm:mean-conv}) and certain attributes of time-varying stochastic Laplacian matrices (Lemma~\ref{lm:conn}), to be used as intermediate ingredients in the proof.

\begin{lemma}[Lemma 4.3 in~\cite{Kar-AdaptiveDistEst-SICON-2012}]
\label{lm:mean-conv} Let $\{\mathbf{w}_{t}\}$ be an $\mathbb{R}_{+}$-valued $\{\mathcal{F}_{t}\}$ adapted process that satisfies
\begin{equation}
\label{lm:mean-conv1}
\mathbf{w}_{t+1}\leq \left(1-r_{1}(t)\right)\mathbf{w}_{t}+r_{2}(t)U_{t}\left(1+J_{t}\right).
\end{equation}
In the above, $\{r_{1}(t)\}$ is an $\{\mathcal{F}_{t+1}\}$ adapted process, such that for all $t$, $r_{1}(t)$ satisfies $0\leq r_{1}(t)\leq 1$ and
\begin{equation}
\label{lm:JSTSP2}
\frac{a_{1}}{(t+1)^{\delta_{1}}}\leq\mathbb{E}\left[r_{1}(t)~|~\mathcal{F}_{t}\right]\leq 1
\end{equation}
with $a_{1}>0$ and $0\leq \delta_{1}< 1$; $\{r_{2}(t)\}$ is a deterministic sequence satisfying $r_{2}(t)\leq a_{2}.(t+1)^{-\delta_{2}}$ for all $t\geq 0$, where $a_{2}$ and $\delta_{2}$ are positive constants.
Further, let $\{U_{t}\}$ and $\{J_{t}\}$ be $\mathbb{R}_{+}$ valued $\{\mathcal{F}_{t}\}$ and $\{\mathcal{F}_{t+1}\}$ adapted processes respectively with $\sup_{t\geq 0}U_{t}<\infty$ a.s. The process $\{J_{t}\}$ is i.i.d.~with $J_{t}$ independent of $\mathcal{F}_{t}$ for each $t$ and satisfies the moment condition $\mathbb{E}\left[J_{t}^{2+\varepsilon}\right]<\infty$ for a constant $\varepsilon>0$. Then, if $\delta_{2}>\delta_{1}+1/(2+\varepsilon)$,
we have $(t+1)^{\delta_{0}}\mathbf{w}_{t}\rightarrow 0$ a.s. as $t\rightarrow\infty$ for all $\delta_{0}\in [0,\delta_{2}-\delta_{1}-1/(2+\varepsilon))$.
\end{lemma}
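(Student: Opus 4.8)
The plan is to convert the stochastic recursion into a deterministic recursive inequality for the moment $f_t:=\mathbb{E}\big[\mathbf{w}_t^{2+\varepsilon}\big]$, solve that inequality by an elementary discrete Gr\"onwall estimate, and then upgrade to the pathwise statement via Markov's inequality and the Borel--Cantelli lemma; the exponent $1/(2+\varepsilon)$ in the conclusion will emerge precisely as the summability threshold in the Borel--Cantelli step, which is exactly why the moment hypothesis reads $\mathbb{E}[J_t^{2+\varepsilon}]<\infty$. As a preliminary reduction, since $\sup_t U_t<\infty$ a.s., I would localize: for each $C>0$ let $\sigma_C=\inf\{t:U_t>C\}$ (an $\{\mathcal{F}_t\}$-stopping time) and let $\widehat{\mathbf{w}}_t$ solve the recursion with equality, with $\widehat{\mathbf{w}}_0=\mathbf{w}_0$ and with $U_t$ replaced by the bounded adapted process $\widehat U_t:=U_t\mathbb{I}(t<\sigma_C)+C\,\mathbb{I}(t\geq\sigma_C)\leq C$; an induction gives $\mathbf{w}_t\leq\widehat{\mathbf{w}}_t$ on $\{\sigma_C=\infty\}$, and $\bigcup_{C}\{\sigma_C=\infty\}$ has full measure, so it suffices to treat the case $U_t\leq C$ with $C$ a deterministic constant. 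Write $p:=2+\varepsilon$ and $\mu_p:=\mathbb{E}[(1+J_0)^p]<\infty$.

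The core step is to raise the recursion to the power $p$. For $a,b\geq 0$ and $\eta\in(0,1)$, convexity of $x\mapsto x^p$ gives $(a+b)^p\leq(1-\eta)^{1-p}a^p+\eta^{1-p}b^p$; I would apply this with $a=(1-r_1(t))\mathbf{w}_t$, $b=r_2(t)U_t(1+J_t)$ and a deterministic sequence $\eta_t\in(0,1)$, and then take $\mathbb{E}[\,\cdot\mid\mathcal{F}_t]$, using $(1-r_1(t))^p\leq 1-r_1(t)$, $\mathbb{E}[r_1(t)\mid\mathcal{F}_t]\geq a_1(t+1)^{-\delta_1}$, $U_t\leq C$, $r_2(t)\leq a_2(t+1)^{-\delta_2}$, and the independence of $J_t$ from $\mathcal{F}_t$ (the convex split has separated the two noise sources, so any correlation between $r_1(t)$ and $J_t$ is now irrelevant). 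This gives
\begin{equation*}
\mathbb{E}\big[\mathbf{w}_{t+1}^p\mid\mathcal{F}_t\big]\leq(1-\eta_t)^{1-p}\big(1-a_1(t+1)^{-\delta_1}\big)\mathbf{w}_t^p+\eta_t^{1-p}\,\mu_p\,a_2^p\,C^p\,(t+1)^{-p\delta_2}.
\end{equation*}
Taking $\eta_t=\kappa(t+1)^{-\delta_1}$ with $\kappa=\kappa(a_1,p)>0$ small enough that $(1-\eta_t)^{1-p}\big(1-a_1(t+1)^{-\delta_1}\big)\leq 1-\tfrac{a_1}{2}(t+1)^{-\delta_1}$ for large $t$ (possible since $(1-\eta_t)^{1-p}\leq 1+2(p-1)\eta_t$ for small $\eta_t$), I obtain, with $f_t=\mathbb{E}[\mathbf{w}_t^p]$,
\begin{equation*}
f_{t+1}\leq\big(1-\tfrac{a_1}{2}(t+1)^{-\delta_1}\big)f_t+c\,(t+1)^{\delta_1(p-1)-p\delta_2}
\end{equation*}
for all large $t$ and a suitable constant $c$.

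Since $\delta_1<1$, the homogeneous factor $\prod_{s}\big(1-\tfrac{a_1}{2}(s+1)^{-\delta_1}\big)$ decays like $\exp(-c't^{1-\delta_1})$, faster than any polynomial, so the elementary deterministic recursion estimate (``the solution is of the order of the forcing term divided by the contraction rate'') yields $f_t=O\big((t+1)^{\delta_1(p-1)-p\delta_2+\delta_1}\big)=O\big((t+1)^{-p(\delta_2-\delta_1)}\big)$, i.e.\ $\mathbb{E}[\mathbf{w}_t^{2+\varepsilon}]=O\big((t+1)^{-(2+\varepsilon)(\delta_2-\delta_1)}\big)$. Finally, given $\delta_0<\delta_2-\delta_1-\tfrac{1}{2+\varepsilon}$ I would pick $\delta_0'\in\big(\delta_0,\,\delta_2-\delta_1-\tfrac{1}{2+\varepsilon}\big)$ and use Markov's inequality: $\mathbb{P}\big(\mathbf{w}_t>(t+1)^{-\delta_0'}\big)\leq(t+1)^{(2+\varepsilon)\delta_0'}f_t=O\big((t+1)^{-(2+\varepsilon)(\delta_2-\delta_1-\delta_0')}\big)$, and the exponent exceeds $1$ by the choice of $\delta_0'$, so Borel--Cantelli gives $\limsup_{\tri}(t+1)^{\delta_0'}\mathbf{w}_t\leq 1$ a.s.; hence $(t+1)^{\delta_0}\mathbf{w}_t=(t+1)^{\delta_0-\delta_0'}\big((t+1)^{\delta_0'}\mathbf{w}_t\big)\to 0$ a.s., which is the claim.

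I expect the moment step to be the main obstacle: the recursion must be raised to a power $>1$ without losing the $(t+1)^{-\delta_1}$ contraction, and the crude bound $(a+b)^p\leq 2^{p-1}(a^p+b^p)$ wipes the contraction out completely. The remedy is the convex split with a weight $\eta_t$ of order exactly $(t+1)^{-\delta_1}$, tuned so that the amplification $(1-\eta_t)^{1-p}$ absorbs only half of the contraction while the noise-side blow-up $\eta_t^{1-p}$ stays polynomial, of order $(t+1)^{\delta_1(p-1)}$; it is the precise balance of these two exponents against $p\delta_2$ that makes the Borel--Cantelli exponent come out as $(2+\varepsilon)(\delta_2-\delta_1-\delta_0)$ and thereby produces the sharp threshold $\delta_0<\delta_2-\delta_1-1/(2+\varepsilon)$.
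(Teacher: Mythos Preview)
The paper does not prove this lemma; it is quoted verbatim as Lemma~4.3 from the cited reference \cite{Kar-AdaptiveDistEst-SICON-2012} and used as a black box. So there is no ``paper's own proof'' to compare against.

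Your argument is correct in substance. The key device --- the weighted convex split $(a+b)^p\le(1-\eta_t)^{1-p}a^p+\eta_t^{1-p}b^p$ with $\eta_t\asymp(t+1)^{-\delta_1}$ --- is exactly what is needed to raise the recursion to power $p=2+\varepsilon$ without destroying the $(t+1)^{-\delta_1}$ contraction, and it cleanly decouples the two $\mathcal{F}_{t+1}$-measurable noises $r_1(t)$ and $J_t$. The resulting deterministic recursion for $f_t=\mathbb{E}[\mathbf{w}_t^p]$, the rate $f_t=O\big((t+1)^{-p(\delta_2-\delta_1)}\big)$, and the Markov/Borel--Cantelli upgrade all go through, and the threshold $\delta_0<\delta_2-\delta_1-1/(2+\varepsilon)$ falls out exactly as you say.

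One small point: as written, your localization fixes $U_t\le C$ but leaves $\widehat{\mathbf{w}}_0=\mathbf{w}_0$, and nothing in the hypotheses gives $\mathbb{E}[\mathbf{w}_0^{2+\varepsilon}]<\infty$, so $f_0$ may be infinite and the moment recursion is not a priori well posed. The fix is immediate: localize also on $\{\mathbf{w}_0\le M\}$ (or set $\widehat{\mathbf{w}}_0=\mathbf{w}_0\wedge M$), since $\bigcup_{M}\{\mathbf{w}_0\le M\}$ has full measure; then $f_0\le M^p<\infty$ and the rest of your argument is unchanged.
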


\begin{lemma}[Lemma 4.4 in~\cite{Kar-AdaptiveDistEst-SICON-2012}]
\label{lm:conn} Let $\{\mathbf{w}_{t}\}$ be an $\mathbb{R}^{NM}$-valued $\{\mathcal{F}_{t}\}$ adapted process such that $\mathbf{w}_{t}\in\mathcal{C}^{\perp}$ for all $t$,
where $\PC$ denotes the orthogonal complement of the consensus subspace $\C$, see Definition~\ref{def:consspace}. Also, let $\{L_{t}\}$ be an $\{\mathcal{F}_{t}\} $-adapted sequence of Laplacians satisfying Assumption~\ref{ass:conn}. Then there exists an $\{\mathcal{F}_{t+1}\}$ adapted $\mathbb{R}_{+}$-valued process $\{r_{t}\}$ (depending on $\{\mathbf{w}_{t}\}$ and $\{L_{t}\}$), a deterministic time $t_{r}$ (large enough), and a constant $c_{r}>0$, such that $0\leq r_{t}\leq 1$ a.s. and
\begin{equation}
\label{lm:conn20}
\left\|\left(I_{NM}-\beta_{t}L_{t}\otimes I_{M}\right)\mathbf{w}_{t}\right\|\leq\left(1-r_{t}\right)\left\|\mathbf{w}_{t}\right\|
\end{equation}
with
\begin{equation}
\label{lm:conn2}
\mathbb{E}\left[r_{t}~|~\mathcal{F}_{t}\right]\geq\frac{c_{r}}{(t+1)^{\tau_{2}}}~~\mbox{a.s.}
\end{equation}
for all $t\geq t_{r}$, where the weight sequence $\{\beta_{t}\}$ and $\tau_{2}$ are defined in~\eqref{weight}.\\
\end{lemma}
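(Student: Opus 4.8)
The plan is to construct the contraction factor $r_{t}$ explicitly as the relative norm decrement produced by the averaging map, and then to lower bound $\mathbb{E}[r_{t}\mid\mathcal{F}_{t}]$ using the spectral gap $\lambda_{2}(\OL)>0$ supplied by Assumption~\ref{ass:conn}. First I would fix a deterministic time $t_{r}$ large enough that $\beta_{t}N\le 1$ for all $t\ge t_{r}$; since each $L_{t}$ is positive semidefinite with $\lambda_{N}(L_{t})\le N$, every eigenvalue of $I_{N}-\beta_{t}L_{t}$ then lies in $[0,1]$, so $I_{NM}-\beta_{t}L_{t}\otimes I_{M}=(I_{N}-\beta_{t}L_{t})\otimes I_{M}$ is non-expansive for $t\ge t_{r}$. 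I would then define $r_{t}=1$ on the event $\{\mathbf{w}_{t}=\mathbf{0}\}$ and $r_{t}=1-\|((I_{N}-\beta_{t}L_{t})\otimes I_{M})\mathbf{w}_{t}\|/\|\mathbf{w}_{t}\|$ otherwise. Since $\mathbf{w}_{t}$ is $\mathcal{F}_{t}$-measurable and $L_{t}$ is $\{\mathcal{F}_{t+1}\}$-adapted, $\{r_{t}\}$ is $\{\mathcal{F}_{t+1}\}$-adapted, non-expansiveness gives $r_{t}\in[0,1]$ for $t\ge t_{r}$, and~\eqref{lm:conn20} holds by construction.

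Next I would convert the defining ratio into a quadratic form. Using the elementary inequality $1-x\ge\tfrac12(1-x^{2})$, valid for $x\in[0,1]$, applied to $x=\|((I_{N}-\beta_{t}L_{t})\otimes I_{M})\mathbf{w}_{t}\|/\|\mathbf{w}_{t}\|$, together with the identity $I_{N}-(I_{N}-\beta_{t}L_{t})^{2}=\beta_{t}L_{t}(2I_{N}-\beta_{t}L_{t})$, yields
\begin{equation*}
r_{t}\ge\frac{\mathbf{w}_{t}^{\top}\bigl((\beta_{t}L_{t}(2I_{N}-\beta_{t}L_{t}))\otimes I_{M}\bigr)\mathbf{w}_{t}}{2\|\mathbf{w}_{t}\|^{2}}.
\end{equation*}
For $t\ge t_{r}$ one has $2I_{N}-\beta_{t}L_{t}\succeq I_{N}$, so the commuting positive semidefinite matrices satisfy $\beta_{t}L_{t}(2I_{N}-\beta_{t}L_{t})\succeq\beta_{t}L_{t}$, whence $r_{t}\ge\beta_{t}\,\mathbf{w}_{t}^{\top}(L_{t}\otimes I_{M})\mathbf{w}_{t}/(2\|\mathbf{w}_{t}\|^{2})$.

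Finally I would take conditional expectations. Because $\mathbf{w}_{t}$ is $\mathcal{F}_{t}$-measurable and $L_{t}$ is independent of $\mathcal{F}_{t}$ with $\mathbb{E}[L_{t}]=\OL$, this gives $\mathbb{E}[r_{t}\mid\mathcal{F}_{t}]\ge\beta_{t}\,\mathbf{w}_{t}^{\top}(\OL\otimes I_{M})\mathbf{w}_{t}/(2\|\mathbf{w}_{t}\|^{2})$. Since $\lambda_{2}(\OL)>0$, the kernel of $\OL\otimes I_{M}$ is exactly the consensus subspace $\C$ (see Definition~\ref{def:consspace}); as $\mathbf{w}_{t}\in\PC$ this yields $\mathbf{w}_{t}^{\top}(\OL\otimes I_{M})\mathbf{w}_{t}\ge\lambda_{2}(\OL)\|\mathbf{w}_{t}\|^{2}$, and therefore $\mathbb{E}[r_{t}\mid\mathcal{F}_{t}]\ge\beta_{t}\lambda_{2}(\OL)/2=b\lambda_{2}(\OL)/\bigl(2(t+1)^{\tau_{2}}\bigr)$, which is~\eqref{lm:conn2} with $c_{r}=b\lambda_{2}(\OL)/2$; on $\{\mathbf{w}_{t}=\mathbf{0}\}$ the bound is trivial after enlarging $t_{r}$ so that $c_{r}/(t+1)^{\tau_{2}}\le 1$.

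The main obstacle to navigate is that a single realization $L_{t}$ may correspond to a disconnected graph, so $(I_{N}-\beta_{t}L_{t})\otimes I_{M}$ need not contract $\PC$ pathwise --- the contraction is only ``in the mean.'' The device that resolves this is the inequality $1-x\ge\tfrac12(1-x^{2})$, which linearizes the square of the norm into a quadratic form in $L_{t}$ to which linearity of conditional expectation and the independence $L_{t}\perp\mathcal{F}_{t}$ apply, after which the deterministic spectral gap of $\OL$ closes the estimate. The only other care needed is to choose $t_{r}$ uniformly over sample paths so that $I_{N}-\beta_{t}L_{t}$ is non-expansive, which follows from $\beta_{t}\to 0$ together with the deterministic bound $\lambda_{N}(L_{t})\le N$.
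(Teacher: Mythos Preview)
Your proof is correct. Note, however, that the paper does not actually prove this lemma: it is quoted verbatim as Lemma~4.4 from~\cite{Kar-AdaptiveDistEst-SICON-2012} and used as a black box, so there is no in-paper argument to compare against. Your construction---defining $r_{t}$ as the exact relative norm decrement, using $1-x\ge\tfrac12(1-x^{2})$ to pass to a quadratic form linear in $L_{t}$, and then invoking independence of $L_{t}$ from $\mathcal{F}_{t}$ together with the spectral gap of $\overline{L}$ on $\mathcal{C}^{\perp}$---is the natural way to establish such ``mean-contraction'' estimates and is almost certainly what the cited reference does as well. One small cosmetic point: for $t<t_{r}$ your definition of $r_{t}$ could in principle fall outside $[0,1]$, but the lemma only asserts the three properties for $t\ge t_{r}$, so this is harmless (or you may simply set $r_{t}=0$ for $t<t_{r}$).
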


\begin{proof}[Proof of Lemma~\ref{lm:consrate}] Recall the $\{\mathcal{F}_{t}\}$-adapted process $\{\mathbf{z}_{t}\}$ with $\mz_{t}=\vecc(\mz_{n}(t))$ for all $t$, and note that by~\eqref{th:genest1} we have
\begin{equation}
\label{lm:consrate6}
\mz_{t+1}=\left(I_{NM}-\beta_{t}L_{t}\otimes I_{M}\right)\mz_{t}-\alpha_{t}\bK_{t}\left(\bh(\mz_{t})-\bh(\abbtheta)\right)+\alpha_{t}\bK_{t}\left(\bg(\mathbf{y}_{t})-\bh(\abbtheta)\right),
\end{equation}
the functions $\bh(\cdot)$ and $\bg(\cdot)$ being defined in~\eqref{lm:bg4}. For each $n$, let $\bz_{n}(t)=\mz_{n}(t)-\mz^{a}_{t}$, and denote by $\{\bz_{t}\}$ the $\{\mathcal{F}_{t}\}$-adapted process where $\bz_{t}=\vecc(\bz_{n}(t))$ for all $t$. Using the fact $(L_{t}\otimes I_{M})(\mathbf{1}_{N}\otimes\mz^{a}_{t})=\mathbf{0}$, we have
\begin{equation}
\label{lm:consrate7}\bz_{t+1}=\left(I_{NM}-\beta_{t}L_{t}\otimes I_{M}\right)\bz_{t}-\alpha_{t}\bU^{\prime}_{t}+\alpha_{t}\bJ^{\prime}_{t},
\end{equation}
where $\{\bU^{\prime}_{t}\}$ and $\{\bJ^{\prime}_{t}\}$ are $\{\mathcal{F}_{t}\}$-adapted and $\{\mathcal{F}_{t+1}\}$-adapted processes given by
\begin{equation}
\label{lm:consrate8}\bU^{\prime}_{t}=\left(I_{NM}-(\mathbf{1}_{N}\mathbf{1}_{N}^{\top})\otimes I_{M}\right)\bK_{t}\left(\bh(\mz_{t})-\bh(\abbtheta)\right),
\end{equation}
and
\begin{equation}
\label{lm:consrate9} \bJ^{\prime}_{t} = \left(I_{NM}-(\mathbf{1}_{N}\mathbf{1}_{N}^{\top})\otimes I_{M}\right)\bK_{t}\left(\bg(\mathbf{y}_{t})-\bh(\abbtheta)\right)
\end{equation}
respectively. Note that by hypothesis $\sup_{t}\|\bK_{t}\|<\infty$ a.s., and, by Theorem~\ref{th:genest}, $\sup_{t}\|\mz_{t}\|<\infty$ a.s. Hence, by the linear growth condition on $\bh(\cdot)$ (see Assumption~\ref{ass:lingrowth}), there exists an $\{\mathcal{F}_{t}\}$-adapted process $\{U^{\prime}_{t}\}$ such that, $\|\bU^{\prime}_{t}\|\leq U^{\prime}_{t}$ for all $t$ and $\sup_{t\geq 0}\|U^{\prime}_{t}\|<\infty$ a.s. Then, defining $U_{t}$ to be
\begin{equation}
\label{lm:consrate10}U_{t}=U^{\prime}_{t}\bigvee\left\|\left(I_{NM}-(\mathbf{1}_{N}\mathbf{1}_{N}^{\top})\otimes I_{M}\right)\bK_{t}\right\|~~\forall t,
\end{equation}
we have by~\eqref{lm:consrate8}-\eqref{lm:consrate10}
\begin{equation}
\label{lm:consrate11}\|\bU^{\prime}_{t}\|+\|\bJ^{\prime}_{t}\|\leq U_{t}\left(1+J_{t}\right),
\end{equation}
with $\{U_{t}\}$ being $\{\mathcal{F}_{t}\}$-adapted and $\{J_{t}\}$ being the $\{\mathcal{F}_{t+1}\}$-adapted process, $J_{t}=\|\bg(\mathbf{y}_{t})-\bh(\abbtheta)\|$ for all $t$. Note that for every $\Vap>0$ we have
\begin{equation}
\label{lm:consrate12}\East\left[J_{t}^{2+\Vap}\right]<\infty,
\end{equation}
which follows from the fact that $\bg(\mathbf{y}_{t})$ possesses moments of all orders (see Proposition~\ref{prop:analytic}). Hence, by~\eqref{lm:consrate7} we obtain
\begin{equation}
\label{lm:consrate13}
\left\|\bz_{t+1}\right\|\leq\left\|\left(I_{NM}-\beta_{t}L_{t}\otimes I_{M}\right)\bz_{t}\right\|+\alpha_{t}U_{t}\left(1+J_{t}\right)~~\forall t.
\end{equation}
Observe that, by construction, $\bz_{t}\in\PC$ for all $t$, and hence, by Lemma~\ref{lm:conn}, there exists an $\{\mathcal{F}_{t+1}\}$ adapted $\mathbb{R}_{+}$-valued process $\{r_{t}\}$ (depending on $\{\bz_{t}\}$ and $\{L_{t}\}$), a deterministic time $t_{r}$ (large enough), and a constant $c_{r}>0$, such that $0\leq r_{t}\leq 1$ a.s. and
\begin{equation}
\label{lm:consrate14}
\left\|\left(I_{NM}-\beta_{t}L_{t}\otimes I_{M}\right)\bz_{t}\right\|\leq\left(1-r_{t}\right)\left\|\bz_{t}\right\|
\end{equation}
with
\begin{equation}
\label{lm:consrate15}
\mathbb{E}\left[r_{t}~|~\mathcal{F}_{t}\right]\geq\frac{c_{r}}{(t+1)^{\tau_{2}}}~~\mbox{a.s.}
\end{equation}
for all $t\geq t_{r}$. We then have by~\eqref{lm:consrate13}-\eqref{lm:consrate14}
\begin{equation}
\label{lm:consrate16}
\left\|\bz_{t+1}\right\|\leq\left(1-r_{t}\right)\left\|\bz_{t}\right\|+\alpha_{t}U_{t}\left(1+J_{t}\right)~~\forall t.
\end{equation}
Now consider arbitrary $\Vap>0$ and note that, under the moment condition~\eqref{lm:consrate12}, the stochastic recursion in~\eqref{lm:consrate16} falls under the purview of Lemma~\ref{lm:mean-conv} (by taking $\delta_{1}$ and $\delta_{2}$ in Lemma~\ref{lm:mean-conv} to be $\tau_{2}$ and 1 respectively), and we conclude that $(t+1)^{\tau}\|\bz_{t}\|\rightarrow 0$ as $t\rightarrow\infty$ a.s. for each $\tau\in (0, 1-\tau_{2}-1/(2+\Vap))$. Noting that
\begin{equation}
\label{lm:consrate17}
\left\|\mz_{n}(t)-\mz_{l}(t)\right\|\leq\left\|\mz_{n}(t)-\mz^{a}_{t}\right\|+\left\|\mz_{l}(t)-\mz^{a}_{t}\right\|\leq 2\left\|\bz_{t}\right\|
\end{equation}
for each pair $n$ and $l$ of agents, we may further conclude that
\begin{equation}
\label{lm:consrate18}\Past\left(\lim_{t\rightarrow\infty}(t+1)^{\tau}\|\mathbf{z}_{n}(t)-\mathbf{z}_{l}(t)\|=0\right)=1,
\end{equation}
for all $\tau\in (0,1-\tau_{2}-1/(2+\Vap))$.

Since the above holds for arbitrary $\Vap>0$, the desired assertion follows by making $\Vap$ tend to $\infty$.
\end{proof}

\begin{proof}[Proof of Corollary~\ref{corr:genrate}] Since $(t+1)^{\tau_{3}}\|K_{n}(t)-\mathcal{K}\|\rightarrow 0$ a.s. as $t\rightarrow\infty$ for all $n$, by Egorov's theorem, for each $\Vap>0$, there exist a deterministic $t_{\Vap}>0$ and a positive constant $c_{\Vap}$, such that
\begin{equation}
\label{corr:genrate1}\Past\left(\sup_{t\geq t_{\Vap}}(t+1)^{\tau_{3}}\left\|K_{n}(t)-\mathcal{K}\right\|>c_{\Vap}\right)<\Vap
\end{equation}
for all $n$. Now, for such an $\Vap>0$, define, for each $n$, the following $\{\mathcal{F}_{t}\}$-adapted sequence $\{K^{\Vap}_{n}(t)\}$:
\begin{equation}
\label{corr:genrate2}K_{n}^{\Vap}(t)=\left\{\begin{array}{ll}
                                    K_{n}(t) & \mbox{if $t<t_{\Vap}$}\\
                                    K_{n}(t) & \mbox{if $t\geq t_{\Vap}$ and $\|K_{n}(t)-\mathcal{K}\|\leq c_{\Vap}(t+1)^{-\tau_{3}}$}\\
                                    \mathcal{K} & \mbox{otherwise}.
                                    \end{array}\right.
\end{equation}
Note that, by the above construction, we have $\|K_{n}(t)-\mathcal{K}\|\leq c_{\Vap}(t+1)^{-\tau_{3}}$ for all $t\geq t_{\Vap}$; hence, choosing $\tau^{\prime}\in (0,\tau_{3})$ to be a constant (independent of $\Vap$), we have that
\begin{equation}
\label{corr:genrate3}(t+1)^{\tau^{\prime}}\left\|K^{\Vap}_{n}(t)-\mathcal{K}\right\|\leq c_{\Vap}(t+1)^{-(\tau_{3}-\tau^{\prime})}~~~\forall t\geq t_{\Vap}
\end{equation}
for all $n$ and each $\Vap>0$. Thus, clearly, for each $\Vap>0$ and all $n$, the sequence $\{K^{\Vap}_{n}(t)\}$ converges a.s. to $\mathcal{K}$ uniformly (over sample paths) at rate $\tau^{\prime}>0$, i.e., for each $\delta>0$, there exists (deterministic) $t_{\delta}>0$ such that
\begin{equation}
\label{corr:genrate4}\Past\left(\sup_{t\geq t_{\delta}}(t+1)^{\tau^{\prime}}\left\|K_{n}^{\Vap}(t)-\mathcal{K}\right\|\leq\delta\right)=1.
\end{equation}
Now, for each $\Vap>0$, let us define the $\{\mathcal{F}_{t}\}$-adapted sequences $\{\mathbf{z}^{\Vap}_{n}(t)\}$, $n=1,\cdots,N$, evolving as
\begin{equation}
\label{corr:genrate5}\mathbf{z}_{n}^{\Vap}(t+1)=\mathbf{z}_{n}^{\Vap}(t)-\beta_{t}\sum_{l\in\Omega_{n}(t)}\left(\mathbf{z}_{n}^{\Vap}(t)-\mathbf{z}_{l}^{\Vap}(t)\right)+\alpha_{t}K_{n}^{\Vap}(t)\left(g_{n}(\mathbf{y}_{n}(t))-h_{n}(\mathbf{z}_{n}^{\Vap}(t))\right).
\end{equation}
Noting that
\begin{equation}
\label{corr:genrate6}
\left\{\sup_{n,t}\left\|\mathbf{z}^{\Vap}_{n}(t)-\mathbf{z}_{n}(t)\right\|=0\right\}~~\mbox{on}~~\left\{\sup_{n,t}\left\|K_{n}^{\Vap}(t)-K_{n}(t)\right\|=0\right\},
\end{equation}
we have
\begin{equation}
\label{corr:genrate7}\Past\left(\sup_{n,t}\left\|\mathbf{z}^{\Vap}_{n}(t)-\mathbf{z}_{n}(t)\right\|=0\right)\geq 1-N\Vap
\end{equation}
by~\eqref{corr:genrate1}-\eqref{corr:genrate2}.

The uniform convergence of the gain sequences $\{K^{\Vap}_{n}(t)\}$ to $\mathcal{K}$ at rate $\tau^{\prime}>0$ ensures that, for each $\Vap>0$, the processes $\{\mathbf{z}_{n}^{\Vap}(t)\}$ satisfy the hypotheses of Theorem~\ref{th:genrate} and, hence, there exists a positive constant $\mu$ (that depends on $\tau^{\prime}$ but not $\Vap$), such that $(t+1)^{\mu}\|\mathbf{z}_{n}^{\Vap}(t)-\abtheta\|\rightarrow 0$ as $t\rightarrow 0$ a.s. for each $n$. Hence, by~\eqref{corr:genrate7} we have
\begin{equation}
\label{corr:genrate8}\Past\left(\lim_{t\rightarrow\infty}(t+1)^{\mu}\left\|\mathbf{z}_{n}(t)-\abtheta\right\|=0\right)\geq 1-N\Vap
\end{equation}
for all $n$. Since $\Vap>0$ is arbitrary and $\mu$ does not depend on $\Vap$, we may further conclude from~\eqref{corr:genrate8} that
$(t+1)^{\mu}\|\mathbf{z}_{n}(t)-\abtheta\|\rightarrow 0$ as $t\rightarrow\infty$ a.s. for all $n$.
\end{proof}

\section{Proofs in Section~\ref{sec:proof_main_res}}
\label{sec:app2}

{\bf Proof of Lemma~\ref{lm:gainconv}}: The proof of Lemma~\ref{lm:gainconv} is accomplished in two steps: first, we show that the gain sequences reach consensus, and subsequently demonstrate that the limiting consensus value is indeed $I^{-1}(\abtheta)$. To this end, consider the following:
\begin{lemma}
\label{lm:gaincons} Recall for each $n$, the $\{\mathcal{F}_{t}\}$-adapted sequence $\{G_{n}(t)\}$ evolving as in~\eqref{gain2}, and denote by $\{G^{a}_{t}\}$ their instantaneous network averages, i.e., $G^{a}_{t}=(1/N)\sum_{n=1}^{N}G^{a}_{n}(t)$ for all $t$. Then, for each $n$ and $\tau\in [0,1-\tau_{2})$, we have
\begin{equation}
\label{lm:gaincons1}\Past\left(\lim_{t\rightarrow\infty}(t+1)^{\tau}\left\|G_{n}(t)-G^{a}(t)\right\|=0\right)=1,
\end{equation}
where $\tau_{2}$ is the exponent associated with the weight sequence $\{\beta_{t}\}$, see Assumption~\ref{ass:weight}.
\end{lemma}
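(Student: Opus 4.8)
The plan is to recognize that the matrix-valued recursion~\eqref{gain2} for $\{G_n(t)\}$ is structurally a distributed linear consensus + innovations recursion, entirely analogous to the vector recursion~\eqref{th:genest1} but \emph{without} a monotone nonlinearity and with a bounded (random, but a.s.\ convergent) forcing term $I_n(\bx_n(t))$. Concretely, I would stack the matrices as $G_t = \vecc(G_n(t)) \in \mathbb{R}^{NM \times M}$ and write
\begin{equation}
\label{plan:1}
G_{t+1} = \left(I_{NM} - \beta_t L_t \otimes I_M\right) G_t + \alpha_t\left(\mathcal{I}(\bx_t) - G_t\right),
\end{equation}
where $\mathcal{I}(\bx_t) = \vecc(I_n(\bx_n(t)))$. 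Defining $\bz_n(t) = G_n(t) - G^a_t$ (the disagreement from the instantaneous average) and stacking, the consensus averaging step kills the $\C$-component of the Laplacian action, so the disagreement process obeys
\begin{equation}
\label{plan:2}
\bz_{t+1} = \left(I_{NM} - \beta_t L_t \otimes I_M\right)\bz_t - \alpha_t\left(I_{NM} - (\mathbf{1}_N\mathbf{1}_N^\top/N)\otimes I_M\right)\left(\mathcal{I}(\bx_t) - G_t\right).
\end{equation}
This is exactly the form handled in the proof of Lemma~\ref{lm:consrate}: a contraction by $(I_{NM}-\beta_t L_t\otimes I_M)$ acting on a process living in $\PC$, perturbed by an $\alpha_t$-scaled bounded forcing term.

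The key steps, in order, would be: (i) establish that $\sup_t \|G_n(t)\| < \infty$ a.s.\ for each $n$ --- this follows because $I_n(\cdot)$ is continuous (indeed smooth, by Proposition~\ref{prop:analytic} and Proposition~\ref{prop:inf}, as $I_n = \nabla h_n$) and $\bx_n(t) \to \abtheta$ a.s.\ by Lemma~\ref{lm:auxcons}, so $I_n(\bx_n(t))$ converges a.s.\ and is hence a.s.\ bounded; a simple supermartingale or comparison argument on $\|G_t\|$ then gives the uniform bound; (ii) take norms in~\eqref{plan:2} and apply Lemma~\ref{lm:conn} to the contraction factor, yielding $\|\bz_{t+1}\| \leq (1-r_t)\|\bz_t\| + \alpha_t U_t$ where $\{r_t\}$ satisfies $\mathbb{E}[r_t \mid \mathcal{F}_t] \geq c_r/(t+1)^{\tau_2}$ and $\{U_t\}$ is $\{\mathcal{F}_t\}$-adapted with $\sup_t U_t < \infty$ a.s.\ (using the boundedness from step (i)); (iii) invoke Lemma~\ref{lm:mean-conv} with the forcing term having \emph{no} unbounded martingale part --- here $J_t \equiv 0$ works, or we set $\delta_2 = 1$, $\delta_1 = \tau_2$, and the moment condition is trivially satisfied --- to conclude $(t+1)^\tau \|\bz_t\| \to 0$ a.s.\ for every $\tau \in [0, 1-\tau_2)$; (iv) finally pass from $\|\bz_t\|$ to $\|G_n(t) - G^a_t\|$ via the elementary bound $\|G_n(t) - G^a_t\| \leq \|\bz_t\|$, which is immediate from the definition of $\bz_t$ as the stack of the disagreements.

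The main obstacle --- more a point requiring care than a genuine difficulty --- is step (i), the a.s.\ uniform boundedness of $\{G_n(t)\}$. One must be slightly careful that the forcing sequence $\{I_n(\bx_n(t))\}$, while a.s.\ bounded, need not be \emph{uniformly} bounded over sample paths; but since we only need pathwise boundedness this causes no trouble: on the event (of probability one) where $\bx_n(t) \to \abtheta$ for all $n$, the sequence $I_n(\bx_n(t))$ lies in a (pathwise) compact set, and then $\|G_n(t+1)\| \leq \|(I_{NM}-\beta_t L_t \otimes I_M)G_t\| + \alpha_t(\|\mathcal{I}(\bx_t)\| + \|G_t\|)$ combined with $\|I_{NM}-\beta_t L_t\otimes I_M\| \leq 1 + \beta_t\|L_t\|$ and the summability considerations of the weight sequences gives a deterministic-coefficient recursion of the form $\|G_t\| \leq \prod(1+c\alpha_s)(\text{const})$, hence bounded. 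A secondary subtlety is ensuring the exponent range $[0, 1-\tau_2)$ is exactly what Lemma~\ref{lm:mean-conv} delivers: with $\delta_2 = 1$ and $\delta_1 = \tau_2$ and $\varepsilon \to \infty$ (the forcing being bounded, every moment is finite), the attainable exponents are $[0, 1 - \tau_2 - 1/(2+\varepsilon)) \uparrow [0, 1-\tau_2)$, so taking $\varepsilon$ arbitrarily large recovers the claimed range, just as in the proof of Lemma~\ref{lm:consrate}.
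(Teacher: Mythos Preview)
Your proposal is essentially the paper's own argument: write the disagreement recursion, note the forcing term is a.s.\ bounded (because $I_n(\bx_n(t))\to I_n(\abtheta)$ by continuity and Lemma~\ref{lm:auxcons}), apply Lemma~\ref{lm:conn} for the Laplacian contraction on $\PC$, and finish with Lemma~\ref{lm:mean-conv} with $J_t\equiv 0$, $\delta_1=\tau_2$, $\delta_2=1$, letting $\varepsilon\to\infty$ to recover the full range $[0,1-\tau_2)$.

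Two small points where you diverge from the paper are worth flagging. First, your step~(i) is both unnecessary and, as written, does not work: the bound $\|I_{NM}-\beta_t L_t\otimes I_M\|\leq 1+\beta_t\|L_t\|$ leads to a product $\prod(1+c\beta_s)$ which diverges since $\sum\beta_s=\infty$. The paper avoids this entirely by simplifying your forcing term $P(\mathcal{I}(\bx_t)-G_t)=\bI_t-\bz_t$ and absorbing the $-\alpha_t\bz_t$ piece into the contraction (using $\beta_t/\alpha_t\to\infty$), so that only the a.s.\ boundedness of $\bI_t$ is needed---no separate boundedness of $G_t$. Second, Lemma~\ref{lm:conn} is stated for $\mathbb{R}^{NM}$-valued processes, not matrix-valued ones; the paper handles this by working in the Frobenius norm and applying Lemma~\ref{lm:conn} column-by-column, then taking $r_t$ to be the minimum of the columnwise $r_{m,t}$'s.
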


\begin{proof} We will show the desired convergence in the matrix Frobenius norm (denoted by $\|\cdot\|_{F}$ in the following), the convergence in the induced $\mathcal{L}_{2}$ sense following immediately.  Note that, by Lemma~\ref{lm:auxcons}, $\bx_{n}(t)\rightarrow\abtheta$ as $t\rightarrow\infty$ a.s. for all $n$, hence, for each $n$, by the continuity of the local Fisher information matrix $I_{n}(\cdot)$, we have that $I_{n}(\bx_{n}(t))\rightarrow I_{n}(\abtheta)$ as $\tri$ a.s.

Let $\bG_{n}(t)=G_{n}(t)-G^{a}_{t}$ denote the deviation at agent $n$ from the instantaneous network average $G^{a}_{t}$ and $I^{a}_{t}=(1/N)\sum_{n=1}^{N}I_{n}(\bx_{n}(t))$ the network average of the $I_{n}(\bx_{n}(t))$'s. Also, let $\bG_{t}$ and $\bI_{t}$ denote the matrices $\vecc\left(\bG_{n}(t)\right)$ and $\vecc\left(\bI_{n}(t)\right)$ respectively, where $\bI_{n}(t)=I_{n}(\bx_{n}(t))-I^{a}_{t}$ for all $n$. Using the following readily verifiable properties of the Laplacian $L_{t}$
\begin{equation}
\label{lm:gaincons3}\left(\mathbf{1}_{N}\otimes I_{M}\right)^{\top}\left(L_{t}\otimes I_{M}\right)=\mathbf{0}~~\mbox{and}~~\left(L_{t}\otimes I_{M}\right)\left(\mathbf{1}_{N}\otimes G^{a}_{t}\right)=\mathbf{0},
\end{equation}
we have by~\eqref{gain2}
\begin{equation}
\label{lm:gaincons4}\bG_{t+1}=\left(I_{NM}-\beta_{t}\left(L_{t}\otimes I_{M}\right)-\alpha_{t}I_{NM}\right)\bG_{t}+\alpha_{t}\bI_{t}
\end{equation}
for all $t\geq 0$.

Since for all $n$, $I_{n}(\bx_{n}(t))\rightarrow I_{n}(\abtheta)$ as $\tri$ a.s., the sequences $\{I_{n}(\bx_{n}(t))\}$ are bounded a.s. and, in particular, there exists an $\{\mathcal{F}_{t}\}$-adapted a.s. bounded process $\{U_{t}\}$ such that $\|\bI_{t}\|_{F}\leq U_{t}$ for all $t$. For $m\in\{1,\cdots,M\}$, denote by $\bG_{m,t}$ the $m$-th column of $\bG_{t}$. Clearly, the process $\{\bG_{m,t}\}$ is $\{\mathcal{F}_{t}\}$-adapted and $\bG_{m,t}\in\PC$ for all $t$. Hence, by Lemma~\ref{lm:conn}, there exist a $[0,1]$-valued $\{\mathcal{F}_{t+1}\}$-adapted process $\{r_{m,t}\}$ and a positive constant $c_{m,r}$ such that
\begin{equation}
\label{lm:gaincons5}\left\|\left(I_{NM}-\beta_{t}L_{t}\otimes I_{M}\right)\bG_{m,t}\right\|\leq \left(1-r_{m,t}\right)\left\|\bG_{m,t}\right\|
\end{equation}
and $\East[r_{m,t}|\mathcal{F}_{t}]\geq c_{m,r}/(t+1)^{\tau_{2}}$ a.s. for all $t\geq t_{0}$ sufficiently large. Noting that the square of the Frobenius norm is the sum of the squared column $\mathcal{L}_{2}$ norms, we have
\begin{equation}
\label{lm:gaincons6}\left\|\left(I_{NM}-\beta_{t}L_{t}\otimes I_{M}\right)\bG_{t}\right\|^{2}_{F}\leq\sum_{m=1}^{M}\left(1-r_{m,t}\right)^{2}\left\|\bG_{m,t}\right\|^{2}\leq\left(1-r_{t}\right)^{2}\left\|\bG_{t}\right\|^{2}_{F},
\end{equation}
where $\{r_{t}\}$ is the $\{\mathcal{F}_{t}\}$-adapted process given by $r_{t}=r_{1,t}\wedge\cdots\wedge r_{M,t}$ for all $t$. By the conditional Jensen's inequality we obtain
\begin{equation}
\label{lm:gaincons7}\East\left[r_{t}~|~\mathcal{F}_{t}\right]\geq\bigwedge_{m=1}^{M}\East\left[r_{m,t}~|~\mathcal{F}_{t}\right]\geq c_{r}/(t+1)^{\tau_{2}}
\end{equation}
for some constant $c_{r}>0$ and all $t\geq t_{0}$. Since $\beta_{t}/\alpha_{t}\rightarrow\infty$ as $\tri$, by making $t_{0}$ larger if necessary, we obtain from~\eqref{lm:gaincons6}
\begin{align}
\label{lm:gaincons8}\left\|\left(I_{NM}-\beta_{t}L_{t}\otimes I_{M}-\alpha_{t}I_{NM}\right)\bG_{t}\right\|_{F}\leq\left\|\left(I_{NM}-\beta_{t}L_{t}\otimes I_{M}\right)\bG_{t}\right\|_{F}+\alpha_{t}\left\|\bG_{t}\right\|_{F}\\ \leq \left(1-r_{t}\right)\left\|\bG_{t}\right\|_{F}+\alpha_{t}\left\|\bG_{t}\right\|_{F}\leq\left(1-r_{t}/2\right)\left\|\bG_{t}\right\|_{F}
\end{align}
for all $t\geq t_{0}$. It then follows from~\eqref{lm:gaincons4} and~\eqref{lm:gaincons8} that
\begin{equation}
\label{lm:gaincons9}\left\|\bG_{t+1}\right\|_{F}\leq\left\|\left(I_{NM}-\beta_{t}L_{t}\otimes I_{M}-\alpha_{t}I_{NM}\right)\bG_{t}\right\|_{F}+\alpha_{t}U_{t}\leq \left(1-r_{t}/2\right)\left\|\bG_{t}\right\|_{F}+\alpha_{t}U_{t}
\end{equation}
for all $t\geq t_{0}$. Clearly, the above recursion falls under the purview of Lemma~\ref{lm:mean-conv} (by setting  $\delta_{1}$, $\delta_{2}$ and $J_{t}$ in Lemma~\ref{lm:mean-conv} to $\tau_{2}$, 1 and 0 respectively), and we conclude that $(t+1)^{\tau}\|\bG_{t}\|_{F}\rightarrow 0$ as $\tri$ a.s. for each $\tau\in [0,1-\tau_{2})$. The assertion in Lemma~\ref{lm:gaincons} follows immediately.
\end{proof}

We state another approximation result from~\cite{Fabian-1} regarding deterministic recursions to be used in the sequel.
\begin{proposition}[Lemma 4.3 in~\cite{Fabian-1}]
\label{prop:Fab-1}Let $\{b_{t}\}$ be a scalar sequence satisfying
\begin{equation}
\label{prop:Fab-11}
b_{t+1}\leq\left(1-\frac{c}{t+1}\right)b_{t}+d_{t}(t+1)^{-\tau}
\end{equation}
where $c>\tau$, $\tau>0$, and the sequence $\{d_{t}\}$ is summable. Then $\limsup_{t\rightarrow\infty}(t+1)^{\tau}b_{t}<\infty$.
\end{proposition}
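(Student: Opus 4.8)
The plan is to iterate the one-step inequality \eqref{prop:Fab-11} and then estimate the two resulting pieces — the homogeneous term and the accumulated forcing term — after renormalizing by $(t+1)^{\tau}$. First I would fix $t_0$ large enough that $1-c/(s+1)\in(0,1)$ for all $s\ge t_0$, which is possible since $c>0$; the finitely many early terms $b_0,\dots,b_{t_0}$ are irrelevant for a $\limsup$ statement. Unrolling \eqref{prop:Fab-11} from $t_0$ up to $t-1$, and using that all the factors $1-c/(r+1)$ are positive (so the iteration preserves the direction of the inequality), one obtains for $t\ge t_0$
\[
b_t\le\Bigl(\prod_{s=t_0}^{t-1}(1-\tfrac{c}{s+1})\Bigr)b_{t_0}+\sum_{s=t_0}^{t-1}\Bigl(\prod_{r=s+1}^{t-1}(1-\tfrac{c}{r+1})\Bigr)d_s(s+1)^{-\tau},
\]
the empty product being $1$.

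The one estimate that does the work is the classical product bound: from $\log(1-x)\le -x$ for $x\in(0,1)$ and a comparison of $\sum_{r=s+1}^{t-1}\frac{1}{r+1}$ with $\int\frac{dx}{x}$, there is a constant $C_1>0$ depending only on $c$ such that $\prod_{r=s+1}^{t-1}(1-\tfrac{c}{r+1})\le C_1\bigl(\tfrac{s+1}{t+1}\bigr)^{c}$ for all $t_0\le s<t$. Substituting this and multiplying through by $(t+1)^{\tau}$: the homogeneous term is bounded above by $C_1(t_0+1)^{c}|b_{t_0}|(t+1)^{\tau-c}$, which tends to $0$ precisely because $\tau<c$; and each summand of the forcing sum, after multiplication by $(t+1)^{\tau}$, is at most $C_1(t+1)^{\tau-c}(s+1)^{c-\tau}|d_s|\le C_1|d_s|$, since $c-\tau>0$ and $s+1\le t+1$, so the full sum is at most $C_1\sum_{s\ge t_0}|d_s|<\infty$ by summability of $\{d_t\}$. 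Combining the two bounds shows $(t+1)^{\tau}b_t$ is bounded above uniformly in $t$, and in fact $\limsup_{t\to\infty}(t+1)^{\tau}b_t\le C_1\sum_{s\ge t_0}|d_s|<\infty$, which is the claim.

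The argument is purely deterministic and has no genuine obstacle — it is a telescoping of the recursion followed by elementary asymptotics. The only points that need a little care are (i) checking that the constant $C_1$ in the product estimate is uniform in both indices $s$ and $t$, not merely for fixed $s$, and (ii) observing that truncating the iteration at $t_0$ is harmless because $b_{t_0}$ is just a fixed real number. The hypothesis $c>\tau$ enters in exactly the two anticipated ways: it forces the homogeneous contribution to decay, and it bounds the $(s+1)^{c-\tau}$-weighted forcing by a $(t+1)^{c-\tau}$ factor that cancels the $(t+1)^{-c}$ produced by the product, leaving a plain summable series.
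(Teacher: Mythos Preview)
The paper does not prove this proposition; it is quoted verbatim as Lemma~4.3 of~\cite{Fabian-1} and invoked as a black box in the proofs of Lemma~\ref{lm:gainconv} and Lemma~\ref{lm:dev}. Your argument is correct and is the standard one for such recursions: unroll, bound the telescoped product via $\log(1-x)\le -x$ and an integral comparison to get $\prod_{r=s+1}^{t-1}(1-c/(r+1))\le C_{1}((s+1)/(t+1))^{c}$, then observe that after multiplying by $(t+1)^{\tau}$ the homogeneous piece decays (since $c>\tau$) while each forcing summand is dominated by $C_{1}|d_{s}|$, summable by hypothesis. The two caveats you flag---uniformity of $C_{1}$ in both indices and the irrelevance of the finitely many pre-$t_{0}$ terms---are exactly the right ones, and both are easily handled.
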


We now complete the proof of Lemma~\ref{lm:gainconv}.
\begin{proof}[Proof of Lemma~\ref{lm:gainconv}]
Following the notation in the proof of Lemma~\ref{lm:gaincons} and using properties of the graph Laplacian~\eqref{lm:gaincons3}, the process $\{G^{a}_{t}\}$ (the instantaneous network average of the $G_{n}(t)$'s) may be shown to satisfy the following recursion for all $t$:
\begin{equation}
\label{lm:gainconv3}G^{a}_{t+1}=\left(1-\alpha_{t}\right)G^{a}_{t}+\alpha_{t}I^{a}_{t}.
\end{equation}
Noting that the local Fisher information matrices $I_{n}(\cdot)$ are locally Lipschitz in the argument and the fact that $\bx_{n}(t)\rightarrow\abtheta$ as $\tri$ a.s. (see Lemma~\ref{lm:auxcons}), we have that
\begin{equation}
\label{lm:gainconv4}
\left\|I^{a}_{t}-(1/N)I(\abtheta)\right\|=O\left(\vee_{n=1}^{N}\|\bx_{n}(t)-\abtheta\|\right).
\end{equation}
Since, by Lemma~\ref{lm:auxcons}, $(t+1)^{\mu_{0}}\|\bx_{n}(t)-\abtheta\|\rightarrow 0$ as $\tri$ a.s., we may further conclude from~\eqref{lm:gainconv4} that
\begin{equation}
\label{lm:gainconv5}\left\|I^{a}_{t}-(1/N)I(\abtheta)\right\|=o\left((t+1)^{-\mu_{0}}\right).
\end{equation}
Now let $\tau_{5}$ be a positive constant such that $\tau_{5}<(\mu_{0}\wedge 1)$. Noting that $\alpha_{t}=(t+1)^{-1}$ by definition, by~\eqref{lm:gainconv5} we may then conclude that there exists an $\mathbb{R}_{+}$-valued $\{\mathcal{F}_{t}\}$-adapted stochastic process $\{\wid_{t}\}$, such that,
\begin{equation}
\label{lm:gainconv6}\alpha_{t}\left\|I^{a}_{t}-(1/N)I(\abtheta)\right\|\leq \wid_{t}(t+1)^{-\tau_{5}}
\end{equation}
for all $t$, with $\{\wid_{t}\}$ satisfying
\begin{equation}
\label{lm:gainconv7}\wid_{t}=o\left((t+1)^{-1-\mu_{0}+\tau_{5}}\right).
\end{equation}
By~\eqref{lm:gainconv3} and~\eqref{lm:gainconv6} we then obtain
\begin{align}
\label{lm:gainconv8}
\left\|G^{a}_{t+1}-(1/N)I(\abtheta)\right\| & \leq\left(1-(t+1)^{-1}\right)\left\|G^{a}_{t}-(1/N)I(\abtheta)\right\|\\ & +\alpha_{t}\left\|I^{a}_{t}-(1/N)I(\abtheta)\right\|\\ &
\leq \left(1-(t+1)^{-1}\right)\left\|G^{a}_{t}-(1/N)I(\abtheta)\right\|+\wid_{t}(t+1)^{-\tau_{5}}
\end{align}
for all $t$. Further, by~\eqref{lm:gainconv7}, we have $\sum_{t}\wid_{t}<\infty$ a.s. (since $\tau_{5}<\mu_{0}$ by construction); also noting that $\tau_{5}<1$ (again by construction), a pathwise application of Proposition~\ref{prop:Fab-1} to the stochastic recursion~\eqref{lm:gainconv8} yields
\begin{equation}
\label{lm:gainconv9}\limsup_{\tri}(t+1)^{\tau_{5}}\left\|G^{a}_{t}-(1/N)I(\abtheta)\right\|<\infty~~\mbox{a.s.},
\end{equation}
from which we may further conclude that $(t+1)^{\tau_{6}}\left\|G^{a}_{t}-(1/N)I(\abtheta)\right\|\rightarrow 0$ as $\tri$ a.s., where $\tau_{6}$ is another positive constant such that $\tau_{6}<\tau_{5}$.

Now introducing another constant $\tau_{7}$ such that $0<\tau_{7}<(1-\tau_{2})\wedge\tau_{6}$, by Lemma~\ref{lm:gaincons} it may be readily concluded that
\begin{equation}
\label{lm:gainconv10}
\Past\left(\lim_{\tri}(t+1)^{\tau_{7}}\left\|G_{n}(t)-(1/N)I(\abtheta)\right\|=0\right)=1
\end{equation}
for all $n$. Finally, noting that matrix inversion is a locally Lipschitz operator in a neighborhood of an invertible argument, we have by~\eqref{gain1},~\eqref{ass:weight3}, and~\eqref{lm:gainconv10} that
\begin{align}
\label{lm:gainconv11}
\left\|K_{n}(t)-N.I^{-1}(\abtheta)\right\|&=\left\|\left(G_{n}(t)+\varphi_{t}I_{M}\right)^{-1}-N.I^{-1}(\abtheta)\right\|\\ &=O\left(\left\|G_{n}(t)-(1/N)I(\abtheta)\right\|+\varphi_{t}\right) =O\left((t+1)^{-\tau_{7}}+(t+1)^{-\mu_{2}}\right)\\ & =o\left((t+1)^{-\tau^{\prime}}\right),
\end{align}
where $\tau^{\prime}$ may be taken to be an arbitrary positive constant satisfying $\tau^{\prime}<\tau_{7}\wedge\mu_{2}$. Hence, the desired assertion follows.
\end{proof}

{\bf Proof of Lemma~\ref{lm:dev}}: The following intermediate approximation will be used in the proof of Lemma~\ref{lm:dev}.
\begin{lemma}
\label{lm:devcons} For each $n$, let $\{\mathbf{x}_{n}(t)\}$ and $\{\bv_{n}(t)\}$ be as in the hypothesis of Lemma~\ref{lm:dev} and denote by $\{\bu_{n}(t)\}$ the $\{\mathcal{F}_{t}\}$-adapted process such that $\bu_{n}(t)=\mathbf{x}_{n}(t)-\bv_{n}(t)$ for all $t$. Then, for each $\gamma\in [0,1-\tau_{2})$ \textup{(}where $\tau_{2}$ is the exponent corresponding to $\{\beta_{t}\}$, see Assumption~\ref{ass:weight}\textup{)}, we have
\begin{equation}
\label{lm:devcons1}\Past\left(\lim_{\tri}(t+1)^{\gamma}\left\|\bu_{n}(t)-\bu_{l}(t)\right\|=0\right)=1
\end{equation}
for all pairs $(n,l)$ of network agents.
\end{lemma}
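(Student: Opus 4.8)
The plan is to subtract the defining recursion~\eqref{opt:1} for $\{\mathbf{x}_{n}(t)\}$ from the recursion~\eqref{corr:linz1} for $\{\bv_{n}(t)\}$ and then analyze the resulting \emph{disagreement} among the agents, exactly along the lines of the proofs of Lemma~\ref{lm:consrate} and Lemma~\ref{lm:gaincons}. Since~\eqref{opt:1} and~\eqref{corr:linz1} use the same consensus weight $\beta_{t}$ and the same innovation gain $K_{n}(t)$, and since the raw observation $g_{n}(\mathbf{y}_{n}(t))$ enters both innovation terms identically, it cancels on subtraction, so $\bu_{n}(t)=\mathbf{x}_{n}(t)-\bv_{n}(t)$ satisfies
\begin{equation}
\bu_{n}(t+1)=\bu_{n}(t)-\beta_{t}\sum_{l\in\Omega_{n}(t)}\left(\bu_{n}(t)-\bu_{l}(t)\right)+\alpha_{t}K_{n}(t)\phi_{n}(t),
\end{equation}
where $\phi_{n}(t)=-\bigl(h_{n}(\mathbf{x}_{n}(t))-h_{n}(\abtheta)\bigr)+I_{n}(\abtheta)\bigl(\bv_{n}(t)-\abtheta\bigr)$. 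The decisive structural point is that the forcing $\alpha_{t}K_{n}(t)\phi_{n}(t)$ carries \emph{no} observation-noise (martingale-difference) component: all of the persistent part has been absorbed into the linearized recursion~\eqref{corr:linz1} by construction, and what remains is a vanishing, purely deterministic-rate term.

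First I would show the forcing vanishes. By Proposition~\ref{prop:analytic} the function $h_{n}$ is $C^{\infty}$ with $\nabla h_{n}=I_{n}$, so the mean-value identity gives $h_{n}(\mathbf{x}_{n}(t))-h_{n}(\abtheta)=\overline{I}_{n}(t)\bigl(\mathbf{x}_{n}(t)-\abtheta\bigr)$ with $\overline{I}_{n}(t)=\int_{0}^{1}I_{n}\bigl(\abtheta+s(\mathbf{x}_{n}(t)-\abtheta)\bigr)\,ds$, whence
\begin{equation}
\phi_{n}(t)=-\overline{I}_{n}(t)\,\bu_{n}(t)+\bigl(I_{n}(\abtheta)-\overline{I}_{n}(t)\bigr)\bigl(\bv_{n}(t)-\abtheta\bigr).
\end{equation}
By Corollary~\ref{corr:xconv}, $\mathbf{x}_{n}(t)\to\abtheta$ a.s., so by continuity of $I_{n}$ we get $\overline{I}_{n}(t)\to I_{n}(\abtheta)$, hence $\sup_{t}\|\overline{I}_{n}(t)\|<\infty$ and $\varepsilon_{n}(t):=\|I_{n}(\abtheta)-\overline{I}_{n}(t)\|\to 0$ a.s.; combined with $\sup_{t}\|K_{n}(t)\|<\infty$ a.s. (Lemma~\ref{lm:gainconv}) this yields a finite random constant $C$ with $\|K_{n}(t)\phi_{n}(t)\|\leq C\|\bu_{n}(t)\|+C\varepsilon_{n}(t)\|\bv_{n}(t)-\abtheta\|$ for all $t$. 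Both terms on the right tend to $0$ a.s.: the first because $\bu_{n}(t)=\mathbf{x}_{n}(t)-\bv_{n}(t)\to 0$ (Corollary~\ref{corr:xconv} and Corollary~\ref{corr:linz}), the second because $\varepsilon_{n}(t)\to 0$ and $\bv_{n}(t)\to\abtheta$ (Corollary~\ref{corr:linz}).

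Next I would pass to the disagreement recursion. Put $\bu_{t}^{a}=(1/N)\sum_{n}\bu_{n}(t)$; using $(L_{t}\otimes I_{M})(\mathbf{1}_{N}\otimes\bu_{t}^{a})=\mathbf{0}$, the stacked deviation $\vecc\bigl(\bu_{n}(t)-\bu_{t}^{a}\bigr)$, which lies in $\PC$, evolves under $(I_{NM}-\beta_{t}L_{t}\otimes I_{M})$ plus $\alpha_{t}$ times a projection of $\vecc\bigl(K_{n}(t)\phi_{n}(t)\bigr)$. Applying Lemma~\ref{lm:conn} to the consensus factor, the bound above to the forcing (together with $\|\vecc(\bu_{n}(t)-\bu_{t}^{a})\|\leq\|\vecc(\bu_{n}(t))\|$ and $\|\bu_{t}^{a}\|\to0$), and absorbing the $O(\alpha_{t})$ ``self'' term into the contraction exactly as in~\eqref{lm:gaincons6}--\eqref{lm:gaincons9}, gives for all large $t$
\begin{equation}
\bigl\|\vecc(\bu_{n}(t+1)-\bu_{t+1}^{a})\bigr\|\leq(1-r_{t})\bigl\|\vecc(\bu_{n}(t)-\bu_{t}^{a})\bigr\|+\alpha_{t}\widetilde{U}_{t},
\end{equation}
with $r_{t}\in[0,1]$ an $\{\mathcal{F}_{t+1}\}$-adapted process satisfying $\East[r_{t}\mid\mathcal{F}_{t}]\geq c_{r}(t+1)^{-\tau_{2}}$, and $\widetilde{U}_{t}$ an $\{\mathcal{F}_{t}\}$-adapted process with $\widetilde{U}_{t}\to0$ (hence $\sup_{t}\widetilde{U}_{t}<\infty$) a.s. Since there is no stochastic forcing, this falls under Lemma~\ref{lm:mean-conv} with $J_{t}\equiv0$, $\delta_{1}=\tau_{2}$ and $\delta_{2}=1$; letting $\varepsilon\to\infty$ there yields $(t+1)^{\gamma}\|\vecc(\bu_{n}(t)-\bu_{t}^{a})\|\to0$ a.s. for every $\gamma\in[0,1-\tau_{2})$, and the lemma follows from $\|\bu_{n}(t)-\bu_{l}(t)\|=\|(\bu_{n}(t)-\bu_{t}^{a})-(\bu_{l}(t)-\bu_{t}^{a})\|\leq 2\|\vecc(\bu_{n}(t)-\bu_{t}^{a})\|$.

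The hard part is the first step: one must verify that the forcing $\alpha_{t}K_{n}(t)\phi_{n}(t)$ decays strictly faster than the $(t+1)^{-\tau_{2}}$ contraction rate of Lemma~\ref{lm:conn} can tolerate. This is exactly where the cancellation of $g_{n}(\mathbf{y}_{n}(t))$ is essential --- it removes the only term of size $O(\alpha_{t})$ that does not vanish --- and where the $C^{\infty}$-smoothness of $h_{n}$ and the consistency of \emph{both} $\{\mathbf{x}_{n}(t)\}$ (Corollary~\ref{corr:xconv}, itself resting on Lemma~\ref{lm:gainconv}) and $\{\bv_{n}(t)\}$ (Corollary~\ref{corr:linz}) are used; I note that only plain consistency of $\mathbf{x}_{n}(t)$ is needed here, not a convergence rate, because the $\alpha_{t}=(t+1)^{-1}$ prefactor already provides the required margin over the $(t+1)^{-\tau_{2}}$ smoothing rate.
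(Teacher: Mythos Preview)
Your proposal is correct and follows essentially the same route as the paper: subtract the two recursions so that the noise $g_{n}(\mathbf{y}_{n}(t))$ cancels, project onto $\PC$, apply Lemma~\ref{lm:conn} to the consensus contraction, and then invoke Lemma~\ref{lm:mean-conv} with $J_{t}\equiv 0$, $\delta_{1}=\tau_{2}$, $\delta_{2}=1$, letting $\varepsilon\to\infty$ to cover the full range $[0,1-\tau_{2})$. The paper's own argument is slightly leaner: it simply observes that the forcing $\mathbf{U}^{\prime}_{n}(t)=h_{n}(\mathbf{x}_{n}(t))-h_{n}(\abtheta)-I_{n}(\abtheta)(\bv_{n}(t)-\abtheta)$ is bounded a.s.\ (because $\mathbf{x}_{n}(t)$ and $\bv_{n}(t)$ both converge), which is all Lemma~\ref{lm:mean-conv} needs, and then points to the computations in Lemma~\ref{lm:consrate}. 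Your mean-value decomposition showing $\phi_{n}(t)\to 0$ and the ``self-term'' absorption are valid but not required here---boundedness of $K_{n}(t)\phi_{n}(t)$ already places the disagreement recursion squarely within Lemma~\ref{lm:mean-conv} without any further manipulation.
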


\begin{proof} By~\eqref{opt:1} and~\eqref{corr:linz1}, the process $\{\bu_{n}(t)\}$ is readily seen to satisfy the recursions
\begin{equation}
\label{lm:devcons2}
\bu_{n}(t+1)=\bu_{n}(t)-\beta_{t}\sum_{n=1}^{N}\left(\bu_{n}(t)-\bu_{l}(t)\right)-\alpha_{t}K_{n}(t)\mathbf{U}^{\prime}_{n}(t),
\end{equation}
where
\begin{equation}
\label{lm:devcons3}\mathbf{U}_{n}^{\prime}(t)=h_{n}(\mathbf{x}_{n}(t))-h_{n}(\abtheta)-I_{n}(\abtheta)\left(\bv_{n}(t)-\abtheta\right)
\end{equation}
for all $t$. Noting that the processes $\{\mathbf{x}_{n}(t)\}$ and $\{\bv_{n}(t)\}$ converge a.s. as $\tri$ (see Corollary~\ref{corr:xconv} and Corollary~\ref{corr:linz}), we conclude that the sequence $\{\mpu_{n}(t)\}$, thus defined, is bounded a.s. Denoting by $\mpu_{t}$ and $\bu_{t}$ the block-vectors $\vecc\left(\mpu_{n}(t)\right)$ and $\vecc\left(\bu_{n}(t)\right)$ respectively, from~\eqref{lm:devcons2} we then have
\begin{equation}
\label{lm:devcons4}
\bu_{t+1}=\left(I_{NM}-\beta_{t}L_{t}\otimes I_{M}\right)\bu_{t}-\alpha_{t}\mathbf{u}_{t},
\end{equation}
where $\{\mathbf{u}_{t}\}$ is the $\{\mathcal{F}_{t}\}$-adapted process given by $\mathbf{u}_{t}=\ndiag\left(K_{n}(t)\right).\mpu_{t}$ for all $t$. Noting that $\{\mpu_{t}\}$ is bounded a.s. and the adaptive gain sequence $\{K_{n}(t)\}$ converges a.s. as $\tri$ for all $n$, the process $\{\mathbf{u}_{t}\}$ is readily seen to be bounded a.s. Further, denoting by $\{\wu_{t}\}$ and $\{\wU_{t}\}$ the processes, such that,
\begin{equation}
\label{lm:devcons5}
\wu_{t}=\left(I_{NM}-\mathbf{1}_{N}.\left(\mathbf{1}_{N}\otimes I_{M}\right)^{\top}\right)\bu_{t}~~\mbox{and}~~\wU_{t}=\left(I_{NM}-\mathbf{1}_{N}.\left(\mathbf{1}_{N}\otimes I_{M}\right)^{\top}\right)\mathbf{u}_{t}
\end{equation}
for all $t$, we have (using standard properties of the Laplacian)
\begin{equation}
\label{lm:devcons6}
\wu_{t}=\left(I_{NM}-\beta_{t}L_{t}\otimes I_{M}\right)\wu_{t}-\alpha_{t}\wU_{t}
\end{equation}
for all $t$. Clearly, $\wu_{t}\in\PC$ for all $t$, and we may note that, at this point the evolution~\eqref{lm:devcons6} resembles the dynamics analyzed in Lemma~\ref{lm:consrate} (for the process $\{\bz_{t}\}$, see~\eqref{lm:consrate13}). Following essentially similar arguments as in~\eqref{lm:consrate13}-\eqref{lm:consrate18}, we have $(t+1)^{\gamma}\|\wu_{t}\|\rightarrow 0$ as $\tri$ a.s. for all $\gamma\in [0,1-\tau_{2})$, from which the desired assertion follows.
\end{proof}

\begin{proof}[Proof of Lemma~\ref{lm:dev}] In what follows we stick to the notation in the proof of Lemma~\ref{lm:devcons}. By~\eqref{lm:devcons2} we have that
\begin{equation}
\label{lm:dev8}\bau_{t+1}=\bau_{t}-(1/N)\alpha_{t}\sum_{n=1}^{N}K_{n}(t)\mpu_{n}(t),
\end{equation}
where $\bau_{t}=(1/N)\sum_{n=1}^{N}\bu_{n}(t)$ for all $t$. Now note that, for each $n$, the function $h_{n}(\cdot)$ is twice continuously differentiable with gradient $I_{n}(\cdot)$ (see Proposition~\ref{prop:analytic} and Proposition~\ref{prop:inf}), and hence there exist positive constants $\oc$ and $R$, such that for each $n$,
\begin{equation}
\label{lm:dev9}\left\|h_{n}(\mathbf{z})-h_{n}(\abtheta)-I_{n}(\abtheta)\left(\mathbf{z}-\abtheta\right)\right\|\leq\oc\left\|\mathbf{z}-\abtheta\right\|^{2}
\end{equation}
for all $\mathbf{z}\in\mathbb{R}^{M}$ with $\|\mathbf{z}-\abtheta\|\leq R$. Since $\mathbf{x}_{n}(t)\rightarrow\abtheta$ as $\tri$ a.s. (see Corollary~\ref{corr:xconv}) for each $n$, there exists a finite random time $t_{R}$ such that
\begin{equation}
\label{lm:dev10}\max_{n=1}^{N}\left\|\mathbf{x}_{n}(t)-\abtheta\right\|\leq R~~\mbox{$\forall t\geq t_{R}$ a.s.}
\end{equation}
Hence, by~\eqref{lm:devcons3} and~\eqref{lm:dev9}-\eqref{lm:dev10}, we have that
\begin{align}
\label{lm:dev11}
\mpu_{n}(t) &= h_{n}(\mathbf{x}_{n}(t))-h_{n}(\abtheta)-I_{n}(\abtheta)\left(\bv_{n}(t)-\abtheta\right)
=I_{n}(\abtheta)\left(\mathbf{x}_{n}(t)-\bv_{n}(t)\right)+\mR_{n}(t)\\
&=I_{n}(\abtheta)\bu_{n}(t)+\mR_{n}(t),
\end{align}
for all $n$ and $t$, where the residuals $\mR_{n}(t)$, $n=1,\cdots,N$ satisfy
\begin{equation}
\label{lm:dev12}
\left\|\mR_{n}(t)\right\|\leq\oc\left\|\mathbf{x}_{n}(t)-\abtheta\right\|^{2}~~\mbox{$\forall t\geq t_{R}$ a.s.}
\end{equation}
Standard algebraic manipulations further yield
\begin{align}
\label{lm:dev14}\left\|\mR_{n}(t)\right\| & \leq\oc\left\|\mathbf{x}_{n}(t)-\abtheta\right\|^{2}
 \leq 2\oc\left\|\mathbf{x}_{n}(t)-\bv_{n}(t)\right\|^{2}+2\oc\left\|\bv_{n}(t)-\abtheta\right\|^{2}\\ & =2\oc\left\|\bu_{n}(t)\right\|^{2}+2\oc\left\|\bv_{n}(t)-\abtheta\right\|^{2}  \leq 4\oc\left\|\bau_{t}\right\|^{2}+4\oc\left\|\bu_{n}(t)-\bau_{t}\right\|^{2}+2\oc\left\|\bv_{n}(t)-\abtheta\right\|^{2}
\end{align}
for all $t\geq t_{R}$ a.s.

Note that, the fact that $(t+1)^{\tau}\|\bv_{n}(t)-\abtheta\|\rightarrow 0$ as $\tri$ a.s. for all $n$ and $\tau\in [0,1/2)$ implies that there exists a constant $\gamma_{1}>1/2$ such that
\begin{equation}
\label{lm:dev13}\max_{n=1}^{N}\left\|\bv_{n}(t)-\abtheta\right\|^{2}=o\left((t+1)^{-\gamma_{1}}\right)~~\mbox{a.s.}.
\end{equation}

Also, by Lemma~\ref{lm:devcons} and the fact that $\tau_{2}<1/2$ (see Assumption~\ref{ass:weight}), we have that
\begin{equation}
\label{lm:dev15}
\max_{n=1}^{N}\left\|\bu_{n}(t)-\bau_{t}\right\|=o\left((t+1)^{-\gamma_{2}}\right)~~\mbox{a.s.}
\end{equation}
for some constant $\gamma_{2}>1/2$.

By the previous construction, the recursions for $\{\bau_{t}\}$ may be written as
\begin{equation}
\label{lm:dev16}
\bau_{t+1}=\bau_{t}-\alpha_{t}Q_{t}\bau_{t}-\alpha_{t}\wmR_{t},
\end{equation}
where
\begin{equation}
\label{lm:dev17}
Q_{t}=(1/N)\sum_{n=1}^{N}K_{n}(t)I_{n}(\abtheta),
\end{equation}
and
\begin{equation}
\label{lm:dev18}
\wmR_{t}=(1/N)\sum_{n=1}^{N}K_{n}(t)\left(I_{n}(\abtheta)\left(\bu_{n}(t)-\bau_{t}\right)+\mR_{n}(t)\right)
\end{equation}
for all $t$. By~\eqref{lm:dev14} and \eqref{lm:dev18} we obtain
\begin{align}
\label{lm:dev19}\left\|\wmR_{t}\right\| & \leq (1/N)\sum_{n=1}^{N}\left\|K_{n}(t)I_{n}(\abtheta)\right\|\left\|\bu_{n}(t)-\bau_{t}\right\|\\ & +(4\oc/N)\sum_{n=1}^{N}\left\|K_{n}(t)I_{n}(\abtheta)\right\|\left\|\bau_{t}\right\|^{2}
+(4\oc/N)\sum_{n=1}^{N}\left\|K_{n}(t)I_{n}(\abtheta)\right\|\left\|\bu_{n}(t)-\bau_{t}\right\|^{2}\\ & +(2\oc/N)\sum_{n=1}^{N}\left\|K_{n}(t)I_{n}(\abtheta)\right\|\left\|\bv_{n}(t)-\abtheta\right\|^{2}
\end{align}
for $t\geq t_{R}$ a.s. Then, denoting by $\{\blambda_{t}\}$ the $\{\mathcal{F}_{t}\}$-adapted process such that
\begin{equation}
\label{lm:dev20}\blambda_{t}=(4\oc/N)\sum_{n=1}^{N}\left\|K_{n}(t)I_{n}(\abtheta)\right\|\left\|\bau_{t}\right\|
\end{equation}
for all $t$, and observing that, by~\eqref{lm:dev13} and~\eqref{lm:dev15},
\begin{equation}
\label{lm:dev21}
(2\oc/N)\sum_{n=1}^{N}\left\|K_{n}(t)I_{n}(\abtheta)\right\|\left\|\bv_{n}(t)-\abtheta\right\|^{2}=o\left((t+1)^{-\gamma_{1}}\right)~~\mbox{a.s.},
\end{equation}
\begin{equation}
\label{lm:dev22}
(1/N)\sum_{n=1}^{N}\left\|K_{n}(t)I_{n}(\abtheta)\right\|\left\|\bu_{n}(t)-\bau_{t}\right\|=o\left((t+1)^{-\gamma_{2}}\right)~~\mbox{a.s.},
\end{equation}
and
\begin{equation}
\label{lm:dev200}
(4\oc/N)\sum_{n=1}^{N}\left\|K_{n}(t)I_{n}(\abtheta)\right\|\left\|\bu_{n}(t)-\bau_{t}\right\|^{2}=o\left((t+1)^{-\gamma_{2}}\right)~~\mbox{a.s.}
\end{equation}
(note that the gain sequences $\{K_{n}(t)\}$'s converge a.s., hence, $K_{n}(t)=O(1)$ for all $n$), we obtain the following from~\eqref{lm:dev19}:
\begin{equation}
\label{lm:dev23}\left\|\wmR_{t}\right\|\leq \blambda_{t}\left\|\bau_{t}\right\|+o\left((t+1)^{-\gamma_{3}}\right)
\end{equation}
for some constant $\gamma_{3}$ such that $1/2<\gamma_{3}<\gamma_{1}\wedge\gamma_{2}$. Thus, by~\eqref{lm:dev17}-\eqref{lm:dev18} and~\eqref{lm:dev23}, and by making $t_{R}$ larger if necessary, we conclude that there exists a positive constant $b$ such that
\begin{equation}
\label{lm:dev24}
\left\|\bau_{t+1}\right\|\leq\left\|I_{M}-\alpha_{t}Q_{t}+\alpha_{t}\blambda_{t}I_{M}\right\|\left\|\bau_{t}\right\|+b\alpha_{t}(t+1)^{-\gamma_{3}}
\end{equation}
for all $t\geq t_{R}$ a.s.
Since $K_{n}(t)\rightarrow N.I^{-1}(\abtheta)$ as $\tri$ a.s. for each $n$ and $\sum_{n=1}^{N}I_{n}(\abtheta)=I(\abtheta)$, we have $Q_{t}\rightarrow I_{M}$ as $\tri$ a.s.; similarly, since for all $n$ the sequences $\{\mathbf{x}_{n}(t)\}$ and $\{\bv_{n}(t)\}$ converge to $\abtheta$ a.s. as $\tri$ (see Corollary~\ref{corr:xconv} and Corollary~\ref{corr:linz}), it follows (from definition) that $\bu_{n}(t)\rightarrow 0$ as $\tri$ a.s. for all $n$, and hence $\blambda_{t}\rightarrow 0$ as $\tri$ a.s. The fact that, $Q_{t}\rightarrow I_{M}$ and $\blambda_{t}\rightarrow 0$ as $\tri$ a.s., ensures that, by making $t_{R}$ larger if necessary, the following holds
\begin{equation}
\label{lm:dev25}\left\|I_{M}-\alpha_{t}Q_{t}+\alpha_{t}\blambda_{t}I_{M}\right\|\leq 1-(2/3).\alpha_{t}=1-(2/3).(t+1)^{-1}
\end{equation}
for all $t\geq t_{R}$ a.s. Let $\gamma_{4}$ be a constant such that $1/2<\gamma_{4}<\gamma_{3}\wedge (2/3)$; then, by~\eqref{lm:dev24}-\eqref{lm:dev25}, we have
\begin{equation}
\label{lm:dev26}
\left\|\bau_{t+1}\right\|\leq\left(1-(2/3).(t+1)^{-1}\right)\left\|\bau_{t}\right\|+d_{t}(t+1)^{-\gamma_{4}}
\end{equation}
for all $t\geq t_{R}$ a.s., where $d_{t}=b\alpha_{t}(t+1)^{\gamma_{4}-\gamma_{3}}$. Since $\gamma_{4}<2/3$ and the sequence $\{d_{t}\}$ is summable, a pathwise application of Proposition~\ref{prop:Fab-1} yields
\begin{equation}
\label{lm:dev27}
\Past\left(\limsup_{\tri}(t+1)^{\gamma_{4}}\left\|\bau_{t}\right\|<\infty\right)=1.
\end{equation}
Hence, by choosing $\overline{\tau}$, such that $1/2<\overline{\tau}<\gamma_{2}\wedge\gamma_{4}$ (where $\gamma_{2}$ is defined in~\eqref{lm:dev15}), we have that $(t+1)^{\overline{\tau}}\bu_{n}(t)\rightarrow 0$ as $\tri$ a.s. for all $n$ and the desired assertion follows.
\end{proof}

\bibliographystyle{IEEEtran}
\bibliography{IEEEabrv,CentralBib}

\end{document}